\documentclass[12pt,reqno]{amsproc}%
\usepackage{amsfonts}
\usepackage{amsmath}
\usepackage{amssymb}
\usepackage{graphicx}%
\setcounter{MaxMatrixCols}{30}
\setlength{\hoffset}{-0.9in}
\setlength{\textwidth}{6.75in} \theoremstyle{plain}

\newtheorem{claim}{Claim} [subsection]

\newtheorem{corollary}{Corollary} [subsection]

\newtheorem{definition}{Definition} [subsection]
\newtheorem{example}{Example} [subsection]

\newtheorem{lemma}{Lemma}  [subsection]

\newtheorem{proposition}{Proposition} [subsection]
\newtheorem{remark}{Remark} [subsection]

\newtheorem{theorem}{Theorem} [subsection]
 \numberwithin{equation}  {subsection}
\begin{document}
\centerline{\Large\bf{Reduced Free Products of Unital AH Algebras}}

\vspace{0.5cm}

\centerline{\Large\bf{and}}

\vspace{0.5cm}

\centerline{\Large\bf{Blackadar and Kirchberg's MF Algebras}}

\vspace{1cm}



 \centerline{Don Hadwin \qquad Jiankui Li\footnote{The  second author is partially
 supported by an NSF of China.} \qquad   Junhao Shen\footnote{The  third author is partially
 supported by an NSF grant.} \qquad Liguang Wang\footnote{The fourth author is partially supported by an NSF of China and the Department of Education of Shandong Province of China}}
\bigskip


\vspace{0.2cm}


\bigskip
\noindent\textbf{Abstract: } In the paper, we prove that reduced
free products of unital AH algebras with respect to given faithful
tracial states, in the sense of Voiculescu, are Blackadar and
Kirhcberg's MF algebras. We also show that the reduced free products
of unital AH algebras with respect to given  faithful tracial
states, under mild conditions, are not quasidiagonal. Therefore  we
conclude, for a large class of AH algebras, the
Brown-Douglas-Fillmore extension semigroups of  the reduced free
products of these AH algebras with respect to given faithful tracial
states are not groups.

 Our
result is based on Haagerup and Thorbj{\o}rsen's work on the reduced
C$^*$-algebras of free groups.

\vspace{0.2cm} \noindent{\bf Keywords:} MF algebras, Reduced free
products, BDF semigroups

\vspace{0.2cm} \noindent{\bf 2000 Mathematics Subject
Classification:} Primary 46L10, Secondary 46L54

 \vspace{0.2cm}

\section{Introduction}
The BDF theory was developed by    Brown,   Douglas and   Fillmore
in 1977 in \cite{BDF}.  In order to classify essentially normal
operators, they introduced an important  invariant, $Ext(\mathcal
A)$ (the BDF extension semigroup), for a unital separable
C$^*$-algebra $\mathcal A$. Among many other things they proved  in
\cite{BDF}   that $Ext(C(X))$ is a group when $X$ is a compact
metric space. Later, Choi and Effros \cite{CE}  showed that
$Ext(\mathcal A)$ is a group if $\mathcal A$ is a unital separable
nuclear C$^*$-algebra. By a result of Voiculescu, we know that the
semigroup  $Ext(\mathcal A)$ always has a unit if $\mathcal A$ is a
unital separable C$^*$-algebra.

 Anderson   \cite{An}   provided  the first example of a unital
separable C$^*$-algebra $\mathcal A$ such that $Ext(\mathcal A)$ is
not a group. Using Kazhdan's property $T$ for groups, Wassermann
gave other examples of unital separable C$^*$-algebras $\mathcal A$
such that $Ext(\mathcal A)$ is not a group in \cite{Wa}. In
\cite{Kir}, Kirchberg provided more examples of unital separable
C$^*$-algebras whose BDF extension semigroups are not groups by
showing that the following result:  A C$^*$-algebra $\mathcal A$ has
the local lifting property if and only if $Ext(S(\mathcal A))$ is a
group, where $S(\mathcal A)$ denotes the unitization of $C_0(\Bbb
R)\otimes_{min}\mathcal  A$.

Ever since Anderson's example in \cite{An}, it has been an open
problem whether $Ext(C_r^*(F_2))$, the BDF extension semigroup of
the reduced C$^*$-algebra of free group $F_2$, is a group. This
problem was studied by many mathematician (see \cite{V4}) and
finally settled down in the negative  by Haagerup and
Thorbj{\o}rnsen \cite{Haag} using powerful tools developed from
Voiculescu's free probability theory and random matrix theory. Their
result that $Ext(C_r^*(F_2))$ is not a group follows from a
combination of Voiculescu's result in \cite{V4} and their striking
work on showing that $C_r^*(F_2)$  can be   embedded into $\prod_k
\mathcal M_{n_k}(\Bbb C)/\sum_k \mathcal M_{n_k}(\Bbb C)$ for a
sequence of positive integers $\{n_k\}_{k=1}^\infty$.

If a separable C$^*$-algebra $\mathcal A$ can be embedded into
$\prod_k \mathcal M_{n_k}(\Bbb C)/\sum_k \mathcal M_{n_k}(\Bbb C) $
for a sequence of positive integers $\{n_k\}_{k=1}^\infty$, then
such C$^*$-algebra $\mathcal A$ is call an MF algebra. The concept
of MF algebra was introduced by Blackadar and Kirchberg in \cite{BK}
in order to study the classification problem of C$^*$-algebras. Many
properties of MF algebras were discussed there. Using the concept of
MF algebras, Brown in \cite{Br} (see also \cite{Haag}) generalized
Voiculescu's result in \cite{V4} as follows: {\em If a unital
separable C$^*$-algebra $\mathcal A$ is an MF algebra but not a
quasidiagonal C$^*$-algebra, then $Ext(\mathcal A)$ is not a group.
} Note by a result of Rosenberg, $C_r^*(F_2)$ is not quasidiagonal.
Now Haagerup and Thorbj{\o}rnsen's work can be restated as follows:
{\em $C_r^*(F_2)$ is an MF algebra and $Ext(C_r^*(F_2))$ is not a
group.}

The concept of reduced free products of unital C$^*$-algebras with
respect to given  states was provided by Voiculescu in the context
of his free probability theory \cite{Voi2}. This concept plays an
important role in the recent study of C$^*$-algebras (for example
see \cite{Dyk}, \cite{Dyk2}, \cite{DHR}). Assume that $(\mathcal
A,\tau_{\mathcal A})$  and $(\mathcal B,\tau_{\mathcal B})$  are
unital C$^*$-algebras with   faithful tracial states $\tau_{\mathcal
A}$, and $\tau_{\mathcal B}$ respectively. In \cite {Voi2},
Voiculescu introduced   the reduced free product $(\mathcal
A,\tau_{\mathcal A})*_{red}(\mathcal B,\tau_{\mathcal B})$  of
$(\mathcal A,\tau_{\mathcal A})$  and $(\mathcal B,\tau_{\mathcal
B})$. A quick fact  from the definition of reduced free product of
C$^*$-algebras is the following statement: {\em
$$
(C_r^*(F_2),\tau_{F_2})=(C_r^*(\Bbb Z ),\tau_{\Bbb Z  })*_{red}
(C_r^*( \Bbb Z ),\tau_{ \Bbb Z }),
$$ where, for a  discrete countable group $G$, we let $C_r^*(G )$
be the reduced C$^*$-algebra of group $G$ and $\tau_G$ the canonical
tracial state induced by the left regular representation $\lambda$,
i.e. $\tau_G(\lambda(g))=\langle \lambda(g)\delta_e,\delta_e\rangle$
for any $g$ in $G$ with $\delta_e$ the distinguished vector in
$l^2(G)$.}

In view of Haagerup and Thorbj{\o}rnsen's work on $C_r^*(F_2)$ and
the preceding  fact from the definition of reduced free products,
one should naturally consider the following question:
\begin{enumerate}
\item [] {\em What are   necessary and sufficient conditions on unital separable
 C$^*$-algebras $\mathcal A$ and $\mathcal B$   such
that $$Ext \big ((\mathcal A,\tau_{\mathcal A})*_{red}(\mathcal
B,\tau_{\mathcal B}) \big )\quad \text{ is not a group,}$$ where
$\tau_{\mathcal A}$, and $\tau_{\mathcal B}$, are faithful tracial
states of $\mathcal A$, and $\mathcal B$ respectively?}

\end{enumerate}
This paper grows out in an   attempt to understand Haagerup and
Thorbj{\o}rnsen's result in \cite{Haag} and search for answer to the
preceding question. In fact, we are able to prove the following
generalizations of Haagerup and Thorbj{\o}rnsen's result  mentioned
as above.

\vspace{0.2cm}

\textbf{Theorem 4.2.1: } {\em  Suppose that $\mathcal A_1$ and
$\mathcal A_2$   are unital separable  AH algebras with faithful
tracial states $\tau_1$, and $\tau_2$ respectively. If $\mathcal
A_1$ and $\mathcal A_2$ satisfy Avitzour's condition, {\em i.e.
there are unitaries $u\in \mathcal A_1$ and $v,w\in\mathcal A_2$
such that $ \tau_1(u)=\tau_2(v)=\tau_2(w)=\tau_2(w^*v)=0, $ } then}
$$
Ext \big ((\mathcal A_1,\tau_1)*_{red}(\mathcal A_2,\tau_2) \big
)\quad \text{\em is not a group.}
$$

Recall a unital separable C$^*$-algebra $\mathcal A$ is an
approximately homogeneous (AH) C$^*$-algebra if $\mathcal A$ is an
inductive limit of a sequence of   homogeneous C$^*$-algebras (see
\cite{Bl}). Obviously, all AF algebras, AI algebras and  A$\Bbb T$
algebras are AH algebras.

\vspace{0.2cm}

  \textbf{Theorem 4.2.2: } {\em  Let $\mathcal A$ and
$\mathcal B\ne \Bbb C$ be separable unital AH algebras with faithful
traces $\phi$, and $\psi$ respectively. If $\mathcal A$ is partially
diffuse in the sense of Definition 4.1.1, then}
$$
Ext \big ((\mathcal A,\phi)*_{red}(\mathcal B,\psi) \big ) \quad
\text{\em is not a group.}
$$

  \textbf{Theorem 4.2.3: } {\em  Suppose that $\mathcal
A $  and $\mathcal B$  are unital separable AF algebras with
faithful tracial states $\phi$, and $\psi$ respectively. If
$dim_{\Bbb C}\mathcal A\ge 2$ and $dim_{\Bbb C}\mathcal B\ge 3$,
then}
$$
Ext \big ((\mathcal A,\phi)*_{red}(\mathcal B,\psi) \big )\quad
\text{\em is not a group.}
$$

Using these results, one can easily produce new examples of unital
separable C$^*$-algebras whose BDF extension semigroups are not
groups. For example, {\em Let $\mathcal A$ and $\mathcal B $ be
irrational C$^*$-algebras, or UHF algebras, with faithful traces
$\phi$, and $\psi$ respectively.
 Then
$ Ext \big ((\mathcal A,\phi)*_{red}(\mathcal B,\psi) \big ) $  is
not a group.}  Combining with results from \cite{HaSh4} and
\cite{HaSh5}, one obtains more examples of unital separable
C$^*$-algebras whose BDF extension semigroups are not groups.

One  crucial step in proving Theorem 4.2.1, Theorem 4.2.2 and
Theorem 4.2.3 is our following result on Blackadar and Kirchberg's
MF algebra, whose proof is   based on Haagerup and Thorbj{\o}rnsen's
result that $C_r^*(F_2)$ is an MF algebra.

\vspace{0.2cm} \textbf{Theorem 3.3.3: } {\em  Suppose that $\mathcal
A_i$, $i=1,\ldots, n$, is a family of  unital separable AH algebras
with faithful tracial states $\tau_i$, $i=1,\ldots, n$. Then
$$
(\mathcal A_1,\tau_1)*_{red}\cdots *_{red}(\mathcal A_n,\tau_n)
$$ is an MF algebra.}

Blackadar and Kirchberg's MF algebra is also closely connected to
Voiculescu's topological free entropy dimension. In \cite{Voi}, for
a family of self-adjoint elements $x_1,\ldots, x_n$ in a unital
C$^*$-algebra $\mathcal A$, Voiculescu introduced the notion of
topological free entropy dimension of $x_1,\ldots,x_n$.  In the
definition of
  topological free entropy dimension, it requires  that the
C$^*$-subalgebra generated by $x_1,\ldots, x_n$ in $\mathcal A$ is
an MF algebra. Applying Theorem 3.3.3, we obtain the following
result on Voiculescu's topological free entropy dimension:

\vspace{0.2cm} \textbf{Corollary 3.3.1: }{\em  Suppose that
$(\mathcal A,\tau)$ is a C$^*$-free probability space. Let
$x_1,\ldots, x_n$ be a family of self-adjoint elements in $\mathcal
A$ such that $x_1,\ldots, x_n$ are  free with respect to $\tau$.
Then
$$
\delta_{top}(x_1,\ldots, x_n)\ge 0,
$$ where $
\delta_{top}(x_1,\ldots, x_n)     $ is the Voiculescu's topological
free entropy dimension.}

\vspace{0.2cm}

The organization of the paper is as follows. In section 2, we
introduce some notation and basic concepts needed in the later
sections. In section 3, we start with Haagerup and Thorbj{\o}rnsen's
result on $C_r^*(F_n)$ and show that the reduced free products of
finite dimensional C$^*$-algebras with respect to given tracial
states are MF algebras. Then we conclude that the reduced free
products of AH  algebras with respect to given tracial states are MF
algebras. In section 4, we show that the reduced free products of
unital  C$^*$-algebras, under mild conditions, are
non-quasidiagonal. Combining the results from section 3, we reach
our conclusions on reduced free products of unital AH algebras whose
BDF extension semigroups are not groups in section 4. In section 5,
we further discuss the reduced free products of some tensor products
of unital C$^*$-algebras, which are not covered in section 3.
\section{Notation and Preliminaries}

In this section, we will recall some basic facts on C$^*$-algebras
and introduce some lemmas that will be needed in the later sections.

\subsection{Gram-Schmidt orthogonalization} Suppose $\mathcal H$ is a complex Hilbert space
and $\langle \cdot, \cdot\rangle$ is an inner product on $\mathcal
H$. Let $\{y_m\}_{m=1}^N$ be a family of linearly independent
vectors in $\mathcal H$, where $N$ is a positive integer. Let, for
each $1\le m\le N$,
$$
P_m(y_1,\ldots, y_N) =   \left |
  \begin{aligned}
  &\langle y_1, y_1\rangle  & \quad \langle y_2, y_1\rangle  & \quad \cdots
  &  \quad \langle y_m, y_1\rangle \\
 &\langle y_1, y_2\rangle  & \quad \langle y_2, y_2\rangle  & \quad \cdots
  &  \quad \langle y_m, y_2 \rangle \\
   &  & & \quad  \cdots   &\\
    &\langle y_1, y_{m-1}\rangle  & \quad \langle y_2, y_{m-1}\rangle  & \quad \cdots
  &  \quad \langle y_m, y_{m-1}\rangle \\
   & \ \ \ y_1 \ \    & \quad  y_2 \ \ \   & \quad \cdots\ \ \
  &  \quad y_m \ \ \
  \end{aligned}
  \right |.
$$Since $y_1,\ldots, y_N$ are linearly independent, each
$P_m(y_1,\ldots,y_N)\ne 0$. We let
$$
p_m(y_1,\ldots, y_N)= \frac {P_m(y_1,\ldots, y_N)}{\langle
P_m(y_1,\ldots, y_N),P_m(y_1,\ldots, y_N) \rangle^{1/2}}, \qquad
for \ 1\le m\le N.
$$

Then we have the following statement.
\begin{lemma}
Assume $y_1,\ldots, y_N$ is a family of linearly independent
vectors in $\mathcal H$ and $p_m(y_1,\ldots, y_N)$ for $1\le m\le
N$ is defined  as above. Then $\{  p_m(y_1,\ldots, y_N)\}_{m=1}^N$
forms an orthonormal basis for the closed subspace spanned by
$y_1,\ldots, y_N$ in $\mathcal H$.
\end{lemma}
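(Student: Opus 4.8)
The plan is to read $P_m(y_1,\dots,y_N)$ literally as an $\mathcal H$-valued object, namely the cofactor expansion of the displayed array along its last (and only vector-valued) row:
$$
P_m(y_1,\dots,y_N)=\sum_{k=1}^{m}(-1)^{m+k}\,M_{m,k}\,y_k ,
$$
where $M_{m,k}$ is the scalar $(m-1)\times(m-1)$ minor of the matrix $\big(\langle y_k,y_i\rangle\big)$ obtained by deleting the last row and the $k$-th column. First I would observe that the coefficient of $y_m$ is $M_{m,m}=\det\big(\langle y_k,y_i\rangle\big)_{i,k=1}^{m-1}$ (interpreted as $1$ when $m=1$), which is the Gram determinant of the linearly independent vectors $y_1,\dots,y_{m-1}$ up to a transposition that does not affect its value, hence strictly positive. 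Since $y_1,\dots,y_m$ are linearly independent and $P_m$ is a combination of them with nonzero $y_m$-coefficient, $P_m\neq 0$; thus $\langle P_m,P_m\rangle^{1/2}>0$, so $p_m$ is a well-defined unit vector lying in $\operatorname{span}\{y_1,\dots,y_m\}$.

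Next I would establish orthogonality. For $1\le j\le m-1$, pulling the inner product through the expansion above (using linearity of $\langle\cdot,y_j\rangle$ in the first slot) gives
$$
\langle P_m(y_1,\dots,y_N),\,y_j\rangle=\sum_{k=1}^{m}(-1)^{m+k}M_{m,k}\,\langle y_k,y_j\rangle ,
$$
which is exactly the cofactor expansion along the last row of the array obtained from the original one by replacing the entries $y_1,\dots,y_m$ with $\langle y_1,y_j\rangle,\dots,\langle y_m,y_j\rangle$. That array has its new last row equal to its $j$-th row, so its determinant vanishes; hence $P_m\perp y_j$ for every $j<m$, i.e. $P_m\perp\operatorname{span}\{y_1,\dots,y_{m-1}\}$. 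Since each $P_k\in\operatorname{span}\{y_1,\dots,y_k\}$, this yields $\langle P_m,P_k\rangle=0$ whenever $k<m$, so $\{P_m\}_{m=1}^{N}$ is an orthogonal family of nonzero vectors and $\{p_m\}_{m=1}^{N}$ is orthonormal.

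Finally, each $p_m$ belongs to the $N$-dimensional — hence closed — subspace $V=\operatorname{span}\{y_1,\dots,y_N\}$ of $\mathcal H$, and an orthonormal set of $N$ vectors inside an $N$-dimensional space is automatically a basis of it; since $V$ coincides with the closed span of $y_1,\dots,y_N$, this is the claim. (Alternatively one can induct on $m$: because $p_m$ has nonzero $y_m$-component, $\operatorname{span}\{p_1,\dots,p_m\}=\operatorname{span}\{y_1,\dots,y_m\}$.) I do not expect a genuine obstacle here; the only point that needs a little care is making the ``determinant with a vector-valued last row'' precise and checking that cofactor expansion, linearity of $\langle\cdot,y_j\rangle$, and the vanishing-on-repeated-rows identity are all legitimate in that $\mathcal H$-valued setting — after which everything reduces to bookkeeping.
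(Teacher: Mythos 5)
Your proof is correct and complete: the cofactor expansion along the vector-valued last row, the positivity of the leading Gram minor $M_{m,m}$, and the repeated-row argument for $\langle P_m,y_j\rangle=0$ ($j<m$) are exactly the standard justification of the determinantal Gram--Schmidt construction. The paper states this lemma without proof, treating it as classical, so your argument simply supplies the standard details the authors omit; there is nothing to reconcile.
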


\subsection{Reduced crossed products of C$^*$-algebras by  groups}

Assume that $\mathcal A$ is a separable unital C$^*$-algebra and $G$
is a discrete countable group. Let $\alpha$ be a homomorphism from
$G$ into $Aut(\mathcal A)$. Then we can define the reduced crossed
product, $\mathcal A\rtimes_{\alpha,r} G$, of $\mathcal A$ by the
action   $\alpha$ of $G$ as follows. Let $\rho: \mathcal A
\rightarrow B(\mathcal H)$ be a faithful $*$-representation of
$\mathcal A$ on a separable Hilbert space $\mathcal H$. Let $l^2(G)$
be the Hilbert space associated to $G$ with an orthonormal basis
$\{e_g\}_{g\in G}$. Let $\lambda :G\rightarrow B(l^2(G))$ be the
left regular representation of $G$ on the Hilbert space $l^2(G)$.
Then we let $\mathcal K=\mathcal H\otimes l^2(G)$. And we introduce
a representation $\sigma$ of $\mathcal A$ and $G$ on $\mathcal K$ by
the following:
$$
  \begin{aligned}
      \sigma(g)&= 1\otimes \lambda(g), \qquad \qquad  \forall \ g\in G\\
      \sigma(x)(\xi\otimes e_g)&= (\alpha^{-1}(g)(x)\xi)\otimes e_g,
      \quad \forall \ x\in\mathcal A, \ \forall \ \xi\in\mathcal H, \ \ g\in G.
  \end{aligned}
$$
Then the C$^*$-algebra generated by $\{\sigma(g)\}_{g\in G}$ and
$\{\sigma(x)\}_{x\in\mathcal A}$ in $B(\mathcal K)$ is called the
reduced crossed product of $\mathcal A$ by $G$, and is denoted by
$\mathcal A\rtimes_{\alpha,r} G$. We know that the reduced crossed
product $\mathcal A\rtimes_{\alpha,r} G$ does not depend on the
choice of the faithful $*$-representation $\rho$ of $\mathcal A$.

\subsection{Reduced free products of unital C$^*$-algebras} The
concept of reduced free products of unital C$^*$-algebras was
introduced by Voiculescu in the  context of his free probability
theory. (see \cite{Voi2}, also \cite{Avit})

Assume that $\mathcal A_i$, $i=1,2$, is a separable unital
C$^*$-algebra with a   state $\tau_i$. For each $i=1,2$, let
$(\mathcal H_i,\xi_i,\pi_i)$ be the GNS representation of $\mathcal
A_i$ on the Hilbert space $\mathcal H_i$ such that (i)
$\tau_i(x_i)=\langle \pi_i(x_i)\xi_i,\xi_i\rangle$ for all $x_i\in
\mathcal A_i$ and (ii) $\mathcal H_i= \overline{\{\pi_i(x_i)\xi_i\ |
\ \ x_i\in\mathcal A_i\}}$.

Let $\stackrel{\circ}{\mathcal H_i}=\mathcal H_i\ominus \Bbb C\xi_i$
for $i=1,2$. The Hilbert space free product of $(\mathcal
H_1,\xi_1)$ and  $(\mathcal H_2,\xi_2)$ is given by
$$
\mathcal H=(\mathcal H_1,\xi_1)* (\mathcal H_2,\xi_2) =\Bbb C \xi
\oplus \bigoplus_{n\ge 1} \left ( \bigoplus_{j_1\ne j_2\ne \cdots
\ne j_n} \stackrel{\circ}{\mathcal H}_{j_1}\otimes \cdots \otimes
\stackrel{\circ}{\mathcal H}_{j_n} \right ),
$$ where $\xi$ is the distinguished unit vector in $\mathcal H$.
Let, for $i=1,2$,
$$
\mathcal H(i)  =\Bbb C \xi \oplus \bigoplus_{n\ge 1} \left (
\bigoplus_{i\ne j_1\ne j_2\ne \cdots \ne j_n}
\stackrel{\circ}{\mathcal H}_{j_1}\otimes \cdots \otimes
\stackrel{\circ}{\mathcal H}_{j_n} \right ).
$$
We can define  unitary operators $V_i:  {\mathcal H}_i\otimes
\mathcal H(i) \rightarrow \mathcal H$ as follows:
$$
\begin{aligned}
   \xi_i\otimes \xi &\mapsto \xi\\
   \stackrel{\circ}{\mathcal H}_i\otimes \xi &\mapsto  \stackrel{\circ}{\mathcal H}_i\\
   \xi_i\otimes (\stackrel{\circ}{\mathcal
H}_{j_1}\otimes \cdots \otimes \stackrel{\circ}{\mathcal H}_{j_n}) &
\mapsto \stackrel{\circ}{\mathcal H}_{j_1}\otimes \cdots \otimes
\stackrel{\circ}{\mathcal
H}_{j_n}\\
\stackrel{\circ}{\mathcal H}_i\otimes (\stackrel{\circ}{\mathcal
H}_{j_1}\otimes \cdots \otimes \stackrel{\circ}{\mathcal H}_{j_n})
&\mapsto \stackrel{\circ}{\mathcal H}_i\otimes
\stackrel{\circ}{\mathcal H}_{j_1}\otimes \cdots \otimes
\stackrel{\circ}{\mathcal H}_{j_n}
\end{aligned}
$$
Let $\lambda_i$ be the representation of $\mathcal A_i$ on
$\mathcal H$ given by
$$
\lambda_i(x) = V_i(\pi_i(x)\otimes I_{\mathcal H(i)})V_i^*, \qquad
\forall \ x\in\mathcal A_i.
$$

Then the reduced free product of $(\mathcal A_1,\tau_1)$ and
$(\mathcal A_2,\tau_2)$, or the reduced free product of $\mathcal
A_1$ and $\mathcal A_2$ with respect to $\tau_1$ and $\tau_2$, is
the C$^*$-algebra generated by $
 \lambda_1(\mathcal A_1) $ and $\lambda_2(\mathcal A_2)$ in
 $B(\mathcal H)$, and is denoted by
 $$
 (\mathcal A_1,\tau_1)*_{red}  (\mathcal A_2,\tau_2).
 $$
Moreover, the free product  state $\tau=\tau_1*\tau_2$ on $
 (\mathcal A_1,\tau_1)*_{red}  (\mathcal A_2,\tau_2),
 $ given by $\tau(x)=\langle x\xi,\xi\rangle$, is a faithful
 tracial state if both $\tau_1$ and $\tau_2$ are faithful tracial
 states on $\mathcal A_1$, and $\mathcal A_2$ respectively.

\begin{remark}
Suppose that $\mathcal A_1$, and $\mathcal A_2$ are unital
C$^*$-algebras with  faithful tracial states $\tau_1$, and $\tau_2$
respectively. Suppose that $I_{\mathcal A_1}\in\mathcal B_1$, and
$I_{\mathcal A_2}\in\mathcal B_2$, are  unital C$^*$-subalgebras of
$\mathcal A_1$, and $\mathcal A_2$ respectively. Then there is an
embedding
$$
(\mathcal B_1,\tau_1|_{\mathcal B_1})*_{red} (\mathcal
B_2,\tau_2|_{\mathcal B_2}) \subseteq (\mathcal A_1,\tau_1 )*_{red}
(\mathcal A_2,\tau_2).
$$
\end{remark}

\subsection{Blackadar and Kirchberg's MF algebras} Recall the
definition of Blackadar and Kirchberg's MF algebras (\cite{BK}) as
follows.
\begin{definition}
A separable C$^*$-algebra $\mathcal A$ is called an MF algebra if
there is an embedding from $\mathcal A$ into $\prod_{k=1}^\infty
\mathcal M_{n_k}(\Bbb C)/ \sum_{k=1}^\infty \mathcal M_{n_k}(\Bbb
C)$ for a sequence of positive integers $\{n_k\}_{k=1}^\infty$
where $\mathcal M_{n_k}(\Bbb C)$ is the $n_k\times n_k$ complex
matrix algebra.
\end{definition}
We will need the following lemma in the later sections.
\begin{lemma}
Let $\mathcal A$ be a unital separable C$^*$-algebra. Then the
following are equivalent.
\begin{enumerate}
  \item [(i)] $\mathcal A$ is an MF algebra;
  \item [(ii)] For any family of self-adjoint elements $x_1,\ldots, x_n$ in
  $\mathcal A$, any $\epsilon>0$, and any family of noncommutative
  polynomials $P_1,\ldots, P_r$ in $\Bbb C\langle X_1,\ldots,
  X_n \rangle$,   there are a positive integer
  $k$ and a family of self-adjoint matrices
  $$
A_1,\ldots, A_n \qquad in \ \ \mathcal M_k(\Bbb C)
  $$ such that
  $$
   \max_{1\le j\le r} \left |\|P_j(x_1,\ldots,x_n)\|-\|P_j(A_1,\ldots,A_n)\|  \right
   |\le \epsilon.
  $$
\end{enumerate}
\end{lemma}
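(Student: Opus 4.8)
The plan is to prove the two implications separately: (i) $\Rightarrow$ (ii) by a finite direct‑sum argument, and (ii) $\Rightarrow$ (i) by constructing an embedding from a countable generating set.

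\textbf{For (i) $\Rightarrow$ (ii).} Fix an embedding $\rho\colon\mathcal A\hookrightarrow\prod_k\mathcal M_{n_k}(\mathbb C)/\sum_k\mathcal M_{n_k}(\mathbb C)$, which is automatically isometric, and lift each $\rho(x_i)$ to a sequence $(A_i^{(k)})_k\in\prod_k\mathcal M_{n_k}(\mathbb C)$; since $\rho(x_i)$ is self-adjoint, replacing $A_i^{(k)}$ by its self-adjoint part does not change the class, so we may take each $A_i^{(k)}$ self-adjoint. Using that $\rho$ is an isometric $*$-homomorphism and that the quotient norm of a class $(a_k)_k$ is $\limsup_k\|a_k\|$, we get, with $A^{(k)}=(A_1^{(k)},\dots,A_n^{(k)})$,
$$
\limsup_{k}\|P_j(A^{(k)})\|=\|P_j(x_1,\dots,x_n)\|,\qquad j=1,\dots,r .
$$
Hence there is $K$ with $\|P_j(A^{(k)})\|\le\|P_j(x)\|+\epsilon$ for all $j$ and all $k\ge K$, while for each $j$ there are infinitely many $k$ with $\|P_j(A^{(k)})\|>\|P_j(x)\|-\epsilon$. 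Choose $K\le k_1<k_2<\cdots<k_r$ with $\|P_j(A^{(k_j)})\|>\|P_j(x)\|-\epsilon$ and set $A_i:=A_i^{(k_1)}\oplus\cdots\oplus A_i^{(k_r)}$, a self-adjoint matrix of size $n_{k_1}+\cdots+n_{k_r}$. Since a noncommutative polynomial evaluated at block-diagonal matrices with a common block structure is block diagonal, $\|P_j(A_1,\dots,A_n)\|=\max_{1\le l\le r}\|P_j(A^{(k_l)})\|$, which lies between $\|P_j(x)\|-\epsilon$ and $\|P_j(x)\|+\epsilon$; this is (ii).

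\textbf{For (ii) $\Rightarrow$ (i).} Pick self-adjoint elements $x_1,x_2,\dots$ generating $\mathcal A$ as a C$^*$-algebra (possible by separability) and an enumeration $Q_1,Q_2,\dots$ of the countable set $\bigcup_m\mathbb Q[i]\langle X_1,\dots,X_m\rangle$ arranged so that $Q_j\in\mathbb Q[i]\langle X_1,\dots,X_j\rangle$. For each $k$ apply (ii) to $x_1,\dots,x_k$, to $\epsilon=1/k$, and to $Q_1,\dots,Q_k$, getting $n_k$ and self-adjoint $A_1^{(k)},\dots,A_k^{(k)}\in\mathcal M_{n_k}(\mathbb C)$ with $\big|\,\|Q_j(x_1,\dots,x_k)\|-\|Q_j(A_1^{(k)},\dots,A_k^{(k)})\|\,\big|\le 1/k$ for $j\le k$; put $A_i^{(k)}=0$ for $k<i$. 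Taking $Q_j=X_i$ shows $\sup_k\|A_i^{(k)}\|<\infty$, so $(A_i^{(k)})_k$ defines an element of $\prod_k\mathcal M_{n_k}(\mathbb C)$. Sending $X_i$ to that element gives a $*$-homomorphism $\pi_0$ from the free $*$-algebra $\mathbb Q[i]\langle X_1,X_2,\dots\rangle$ on self-adjoint generators into $\prod_k\mathcal M_{n_k}(\mathbb C)$; composing with the quotient map $q$ and again using that the quotient norm is a $\limsup$, we obtain for every such $Q$
$$
\|q(\pi_0(Q))\|=\limsup_k\|Q(A^{(k)})\|=\|Q(x_1,x_2,\dots)\|,
$$
the last equality because once $k$ is at least the position of $Q$ in the enumeration, $Q$ is among $Q_1,\dots,Q_k$ and involves only $X_1,\dots,X_k$, so $\big|\,\|Q(x)\|-\|Q(A^{(k)})\|\,\big|\le 1/k$. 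Thus $q\circ\pi_0$ is an isometric $*$-homomorphism of the dense $*$-subalgebra $\mathcal P_0\subseteq\mathcal A$ generated over $\mathbb Q[i]$ by the $x_i$; extending it by continuity gives an isometric, hence injective, $*$-homomorphism $\mathcal A\to\prod_k\mathcal M_{n_k}(\mathbb C)/\sum_k\mathcal M_{n_k}(\mathbb C)$, so $\mathcal A$ is an MF algebra.

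The routine ingredients are the computation of the norm in $\prod_k\mathcal M_{n_k}(\mathbb C)/\sum_k\mathcal M_{n_k}(\mathbb C)$ as a $\limsup$, the block-diagonal behaviour of polynomials, and the extension of a $*$-preserving isometry from a dense $*$-subalgebra. The one place that needs care — the main obstacle — is in (i) $\Rightarrow$ (ii): the $\limsup$ identity yields all the upper estimates simultaneously for all large $k$, but the lower estimates only along subsequences that may differ with $j$; forming a finite direct sum of blocks, one block realizing each lower estimate, resolves this while preserving all upper estimates.
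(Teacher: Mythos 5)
Your proof is correct, but it takes a genuinely different (and far more self-contained) route than the paper's. The paper disposes of this lemma in two lines by quoting external results: Corollary 3.4.4 of \cite{BK} (a separable C$^*$-algebra is MF iff every finitely generated C$^*$-subalgebra is) together with Theorem 5.2 of \cite{HaSh4}, which is essentially the finitely generated case of the equivalence (i)$\Leftrightarrow$(ii). You instead prove both implications from first principles: for (i)$\Rightarrow$(ii) you lift along an embedding into $\prod_k\mathcal M_{n_k}(\Bbb C)/\sum_k\mathcal M_{n_k}(\Bbb C)$, use the $\limsup$ formula for the quotient norm, and resolve the mismatch between the uniform upper estimates and the merely subsequential lower estimates by forming a finite direct sum of blocks, one block realizing the lower bound for each $P_j$; for (ii)$\Rightarrow$(i) you run the standard diagonal argument over a countable self-adjoint generating set and a countable family of rational polynomials to obtain an isometric $*$-homomorphism on a dense $*$-subalgebra and extend by continuity. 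Both implications are sound, and the two devices you isolate (the block direct sum and the enumeration/diagonalization) are exactly the content of the cited references, so you have in effect reproved the quoted theorems rather than found a shortcut; what your version buys is independence from \cite{BK} and \cite{HaSh4} at the cost of length. One small point to make explicit: since the $P_j$ are allowed to carry constant terms, your identity $\limsup_k\|P_j(A^{(k)})\|=\|P_j(x_1,\ldots,x_n)\|$ in (i)$\Rightarrow$(ii) tacitly requires the embedding $\rho$ to be unital, so that the constant term of $P_j(A_1^{(k)},\ldots,A_n^{(k)})$, which uses $I_{n_k}$, matches the image of $1_{\mathcal A}$. For a unital separable MF algebra this is harmless --- lift $\rho(1)$ to a sequence of projections $p_k$ and compress to the corners $p_k\mathcal M_{n_k}(\Bbb C)p_k\simeq\mathcal M_{m_k}(\Bbb C)$ --- but it deserves a sentence.
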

\begin{proof} Note a separable C$^*$-algebra $\mathcal A$ is an MF algebra if and
only if every finitely generated C$^*$-subalgebra of $\mathcal A$ is
an MF algebra (see Corollary 3.4.4 in \cite{BK}). The rest follows
from Theorem 5.2 in \cite{HaSh4}.
\end{proof}

\begin{remark}A separable C$^*$-subalgebra of an  MF algebra is also an MF
algebra.
\end{remark}

\section{Reduced Free Products of AH Algebras}
In this section, we are going to show that   reduced free products
of unital AH algebras with respect to given faithful tracial states
 are MF algebras. First, we need to consider GNS
representation of a finite dimensional C$^*$-algebra.

\subsection{GNS representation of a finite dimensional
C$^*$-algebra} Suppose that $\mathcal B$ is a finite dimensional
C$^*$-algebra and $\psi$ is a faithful tracial state of $\mathcal
B$.

Let $d=dim_{\Bbb C} \mathcal B$, the complex dimension of $\mathcal
B$. Then there is a family of elements $ 1, b_1,\ldots, b_{d-1} $ in
$\mathcal B$ that forms a basis of $\mathcal B$, where $1$ is the
identity of $\mathcal B$. Note $\psi$ is a faithful tracial state of
$\mathcal B$. We can introduce an inner product on $\mathcal B $ as
follows.
$$
 \langle x,y\rangle =\psi(y^*x), \qquad \forall \ x,y\in\mathcal
 B.
$$ By the Gram-Schmidt orthogonalization   in Section 2.1,
for the basis $ 1, b_1,\ldots, b_{d-1} $ of $\mathcal B$, we let
$$P_1(1,b_1, \ldots, b_{d-1}:\psi)=1$$ and $$ P_m(1,b_1, \ldots,
b_{d-1}:\psi) =   \left |
  \begin{aligned}
  & \quad 1   & \quad \psi(b_1)    & \quad \cdots
  &  \quad   \psi(b_{m-1})  \\
 &\psi(b_1^*) & \quad \psi(b_1^*b_1)  & \quad \cdots
  &  \quad \psi(b_1^*b_{m-1})   \\
   &  & & \quad  \cdots   &\\
    &\psi(b_{m-2}^* )   & \quad \psi(b_{m-2}^*b_1)   & \quad \cdots
  &  \quad \psi(b_{m-2}^*b_{m-1}) \\
   & \ \ \ 1 \ \    & \quad b_1 \ \ \   & \quad \cdots\ \ \
  &  \quad b_{m-1} \ \ \
  \end{aligned}
  \right |, \qquad 2\le m\le d;
$$
and $$p_{m}(1,b_1, \ldots, b_{d-1}:\psi)=\frac {P_m(1,b_1, \ldots,
b_{d-1}:\psi)}{\left (\psi(P_m(1,b_1, \ldots,
b_{d-1}:\psi)^*P_m(1,b_1, \ldots,
b_{d-1}:\psi))\right)^{1/2}},\qquad 1\le m\le d.$$ Then
$$
1=p_1(1,b_1, \ldots, b_{d-1}:\psi), \  p_2(1,b_1, \ldots,
b_{d-1}:\psi),\ \ldots, \  p_d(1,b_1, \ldots, b_{d-1}:\psi)
$$ forms an orthonormal basis of $\mathcal B=L^2(\mathcal B,\psi)$.
\begin{lemma}
Suppose that $\mathcal B$ is a finite dimensional C$^*$-algebra with
a basis $1,b_1,\ldots, b_{d-1}$, where $d$ is the complex dimension
of $\mathcal B$. Suppose that $\psi$ is a faithful tracial state of
$\mathcal B$. Let
$$
1=p_1(1,b_1, \ldots, b_{d-1}:\psi), \  p_2(1,b_1, \ldots,
b_{d-1}:\psi),\ \ldots, \  p_d(1,b_1, \ldots, b_{d-1}:\psi)
$$ be defined as above.

Let $\Bbb C^d$ be a $d$-dimensional complex Hilbert space with an
orthonormal basis $e_1,\ldots, e_d$. Then there is a faithful unital
$*$-representation $\rho_{\psi}:\mathcal B\rightarrow \mathcal
M_d(\Bbb C)$ of $\mathcal B$ on $\Bbb C^d$ such that
\begin{enumerate}
 \item [(i)] $(\rho_{\psi}, \Bbb C^d, e_1)$ is a GNS representation
 of $(\mathcal B, \psi)$, i.e.
   \begin{enumerate}
     \item [(a)] $\psi(a) = \langle \rho_{\psi} (a)
     e_1,e_1\rangle$ for all $a\in\mathcal B$.
     \item [(b)] $\Bbb C^d= \overline {\{\rho_{\psi}(a)e_1 \ | \ a\in\mathcal
     B\}}$
   \end{enumerate}
   \item [(ii)] For each $1\le i\le d-1$,
   $$
     \rho_{\psi}(b_i) = B_{i,\psi}=\left [ b(s,t:i,\psi) \right
     ]_{s,t=1}^d \in \mathcal M_d(\Bbb C)
   $$ where $b(s,t:i,\psi)$, the $(s, t)$-th entry of the matrix $B_{i,\psi}$,
   is given by
   $$
b(s,t:i,\psi)= \psi(p_t(1,b_1,\ldots, b_{d-1}:\psi)^* b_i
p_s(1,b_1,\ldots, b_{d-1}:\psi))
   $$
\end{enumerate}
\end{lemma}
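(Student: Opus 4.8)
The plan is to obtain $\rho_\psi$ as nothing more than the GNS representation of $(\mathcal B,\psi)$, realised concretely on the finite-dimensional Hilbert space $L^2(\mathcal B,\psi)$, and then to read off its matrix in the Gram--Schmidt basis $p_1,\dots,p_d$. Since $\psi$ is a faithful state, the form $\langle x,y\rangle=\psi(y^*x)$ is a genuine inner product on $\mathcal B$ (it has no nonzero null vectors), so $L^2(\mathcal B,\psi)$ is just $\mathcal B$ carrying this inner product, a complex Hilbert space of dimension $d$. By the discussion preceding the statement, which is an instance of Lemma 2.1.1 applied to the linearly independent family $1,b_1,\dots,b_{d-1}$, the vectors $p_1=1,p_2,\dots,p_d$ (with $p_m=p_m(1,b_1,\dots,b_{d-1}:\psi)$) form an orthonormal basis of this space. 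I then identify $L^2(\mathcal B,\psi)$ with $\Bbb C^d$ via the unitary carrying $p_m$ to $e_m$; in particular, since $p_1=1$, the identity of $\mathcal B$ corresponds to $e_1$.

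Next I would define $\rho_\psi(a)$ to be left multiplication by $a$ on $L^2(\mathcal B,\psi)=\mathcal B$, so $\rho_\psi(a)x=ax$. In this finite-dimensional setting every required property is routine: $\rho_\psi$ is linear, multiplicative, and unital ($\rho_\psi(1)=\mathrm{id}$); it is bounded because $\|ax\|_2^2=\psi(x^*a^*ax)\le\|a^*a\|\,\psi(x^*x)=\|a\|^2\|x\|_2^2$; and it preserves the $*$-operation because $\langle ax,y\rangle=\psi(y^*ax)=\psi((a^*y)^*x)=\langle x,a^*y\rangle$. Hence $\rho_\psi$ is a unital $*$-representation of $\mathcal B$ on $\Bbb C^d$, i.e.\ a unital $*$-homomorphism $\mathcal B\to\mathcal M_d(\Bbb C)$. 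With the distinguished vector $e_1\leftrightarrow 1$ one has $\langle\rho_\psi(a)1,1\rangle=\psi(1^*a\,1)=\psi(a)$, giving (i)(a), while $\rho_\psi(\mathcal B)1=\mathcal B\,1=\mathcal B$ is dense in (indeed equal to) $L^2(\mathcal B,\psi)$, giving (i)(b). Faithfulness is immediate: if $\rho_\psi(a)=0$ then $a=\rho_\psi(a)1=0$.

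It remains to identify the matrix of $\rho_\psi(b_i)$ relative to $(p_1,\dots,p_d)$, which is the content of (ii). Its entries are the numbers $\langle\rho_\psi(b_i)p_s,p_t\rangle=\langle b_ip_s,p_t\rangle=\psi(p_t^*b_ip_s)$; since $p_m=p_m(1,b_1,\dots,b_{d-1}:\psi)$, these are exactly the $b(s,t:i,\psi)$ of the statement, and (ii) follows.

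I do not anticipate a genuine obstacle; the lemma is a concrete repackaging of the finite-dimensional GNS construction. Two points deserve some care. First, faithfulness of $\psi$ must be invoked at the right moment: it is what makes $L^2(\mathcal B,\psi)$ literally equal $\mathcal B$ (without it one would only get a proper quotient, of dimension smaller than $d$), hence what guarantees that $\rho_\psi$ takes values in $\mathcal M_d(\Bbb C)$ and is faithful. Second, one must be consistent about conventions — which vector plays the role of the cyclic vector $e_1$, and which ordering of indices is used for the $(s,t)$-entry of a matrix — so that the explicit formula (ii) comes out exactly as displayed. Finally, it is cleaner not to attempt to \emph{define} $\rho_\psi$ directly through (ii) and then check multiplicativity by brute force: one should first obtain $\rho_\psi$ from the GNS construction and verify (ii) only afterwards.
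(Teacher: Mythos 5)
Your proposal is correct and follows essentially the same route as the paper: both realize $L^2(\mathcal B,\psi)$ as $\mathcal B$ with the inner product $\langle x,y\rangle=\psi(y^*x)$, transport the left-multiplication (GNS) representation to $\Bbb C^d$ via the unitary sending $p_m$ to $e_m$, and then read off the matrix entries in that basis. The paper leaves the verifications as "easy to verify," whereas you spell them out; there is no substantive difference.
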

\begin{proof}
Note that $\mathcal B$ is a finite dimensional C$^*$-algebra with a
faithful tracial state $\psi$. We can  view $\mathcal B=L^2(\mathcal
B,\psi)$ as a Hilbert space with the inner product induced from
$\psi$. Thus $\mathcal B=L^2(\mathcal B,\psi)$ is isomorphic to
$\Bbb C^d$ as a Hilbert space. By the explanation preceding the
lemma, we can introduce a unitary $U: L^2(\mathcal
B,\psi)\rightarrow \Bbb C^d$ by mapping
$$
p_m(1,b_1,\ldots, b_{d-1}:\psi)\mapsto e_m, \qquad \forall \ 1\le
m\le d.
$$
Apparently, such   $U$ induces a faithful unital $*$-representation
$\rho_{\psi}: \mathcal B\rightarrow \mathcal M_d(\Bbb C)$ by
$$
\rho_{\psi}(b)= UbU^*, \qquad \forall \ b\in \mathcal B.
$$ Now it is easy to verify that $(\rho_{\psi}, \Bbb C^d, e_1)$ is a GNS representation
 of $(\mathcal B, \psi)$ satisfying (a) and
 (b). Moreover, for each $1\le i\le d-1$,
   $$
     \rho_{\psi}(b_i) = B_{i,\psi}=\left [ b(s,t:i,\psi) \right
     ]_{s,t=1}^d \in \mathcal M_d(\Bbb C)
   $$ satisfying
   $$
b(s,t:i,\psi)= \psi(p_t(1,b_1,\ldots, b_{d-1}:\psi)^* b_i
p_s(1,b_1,\ldots, b_{d-1}:\psi)).
   $$
This completes the proof.
\end{proof}

\begin{lemma}
Suppose that $\mathcal B$ is a finite dimensional C$^*$-algebra with
a basis $1,b_1,\ldots, b_{d-1}$, where $d$ is the complex dimension
of $\mathcal B$. Suppose that
$\{\tau,\tau_\gamma\}_{\gamma=1}^\infty$ is a family of faithful
tracial states of $\mathcal B$ satisfying $$ \lim_{\gamma\rightarrow
\infty}\tau_{\gamma}(b)=\tau(b) \qquad \forall \ b\in\mathcal B .$$

Let $\Bbb C^d$ be a $d$-dimensional complex Hilbert space with an
orthonormal basis $e_1,\ldots, e_d$. Then there is a sequence of
faithful unital $*$-representations $\rho_{\tau}, \rho_{\tau_\gamma}
:\mathcal B\rightarrow \mathcal M_d(\Bbb C)$ of $\mathcal B$ on
$\Bbb C^d$ for $\gamma=1,2\ldots$ such that
\begin{enumerate}
 \item [(i)] $(\rho_{\tau}, \Bbb C^d, e_1)$  and  $(\rho_{\tau_\gamma}, \Bbb C^d, e_1)$  are
  GNS representations
 of $(\mathcal B, \tau)$, and $ (\mathcal B,  \tau_\gamma)$
 respectively.
   \item [(ii)] For each $1\le i\le d-1$,
   $$
\lim_{\gamma\rightarrow \infty}
\|\rho_{\tau_\gamma}(b_i)-\rho_{\tau}(b_i)\|=0
   $$
   \end{enumerate}
\end{lemma}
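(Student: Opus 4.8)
The plan is to produce all of the representations simultaneously from Lemma 3.1.1 and then extract the norm estimate from the explicit description of the matrix entries it provides. For any faithful tracial state $\psi$ on $\mathcal B$, Lemma 3.1.1 gives a faithful unital $*$-representation $\rho_\psi:\mathcal B\to\mathcal M_d(\Bbb C)$ acting on the \emph{fixed} Hilbert space $\Bbb C^d$ with the \emph{fixed} distinguished vector $e_1$, such that $(\rho_\psi,\Bbb C^d,e_1)$ is a GNS representation of $(\mathcal B,\psi)$. Taking $\psi=\tau$ and $\psi=\tau_\gamma$ yields the representations $\rho_\tau,\rho_{\tau_\gamma}$ required in the statement, so part (i) is immediate. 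Because all norms on the finite-dimensional space $\mathcal M_d(\Bbb C)$ are equivalent, to prove (ii) it suffices to show that for each fixed $i$ and each fixed pair $s,t$ the $(s,t)$-entry of $\rho_{\tau_\gamma}(b_i)$ converges, as $\gamma\to\infty$, to the $(s,t)$-entry of $\rho_\tau(b_i)$.

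By Lemma 3.1.1(ii) that entry equals
$$
b(s,t:i,\psi)=\psi\big(p_t(1,b_1,\ldots,b_{d-1}:\psi)^*\,b_i\,p_s(1,b_1,\ldots,b_{d-1}:\psi)\big),
$$
so I would track the dependence of the Gram--Schmidt data of Section 2.1 on $\psi$. Expanding the determinant $P_m:=P_m(1,b_1,\ldots,b_{d-1}:\psi)$ along its last row $1,b_1,\ldots,b_{m-1}$ gives $P_m=\sum_{j=0}^{m-1}c_{m,j}(\psi)\,b_j$, where $b_0:=1$ and each cofactor $c_{m,j}(\psi)$ is a fixed polynomial in the numbers $\psi(b_a^*b_b)$, $0\le a,b\le d-1$. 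Consequently
$$
\psi(P_m^*P_m)=\sum_{j,k}\overline{c_{m,j}(\psi)}\,c_{m,k}(\psi)\,\psi(b_j^*b_k),\qquad
\psi(P_t^*b_iP_s)=\sum_{j,k}\overline{c_{t,j}(\psi)}\,c_{s,k}(\psi)\,\psi(b_j^*b_ib_k),
$$
so that
$$
b(s,t:i,\psi)=\frac{\psi(P_t^*b_iP_s)}{\big(\psi(P_t^*P_t)\big)^{1/2}\,\big(\psi(P_s^*P_s)\big)^{1/2}}
$$
is an algebraic expression built from finitely many of the values $\psi(w)$, $w\in\mathcal B$, together with two normalizing square-root denominators. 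Since by hypothesis $\tau_\gamma(w)\to\tau(w)$ for every $w\in\mathcal B$ (in particular for the finitely many elements $w=b_a^*b_b$ and $w=b_a^*b_ib_b$ occurring above), the numerators and the radicands all converge from $\tau_\gamma$ to $\tau$; the only thing left to check is that the denominators do not degenerate.

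That non-degeneracy is the one step I expect to require a genuine (if short) argument. For any faithful tracial state $\psi$ the elements $1,b_1,\ldots,b_{d-1}$ are linearly independent in $L^2(\mathcal B,\psi)$, so by the Gram--Schmidt construction of Section 2.1 (cf. Lemma 2.1.1) each $P_m\ne 0$ and hence $\psi(P_m^*P_m)>0$. In particular $\delta_m:=\tau(P_m^*P_m)>0$; since $\tau_\gamma(P_m^*P_m)\to\delta_m$, we have $\tau_\gamma(P_m^*P_m)\ge\delta_m/2$ for all large $\gamma$, and as $\tau_\gamma(P_m^*P_m)>0$ for each of the remaining finitely many $\gamma$, this quantity is bounded below by a strictly positive constant independent of $\gamma$. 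Substituting into the displayed formula for $b(s,t:i,\psi)$ shows $b(s,t:i,\tau_\gamma)\to b(s,t:i,\tau)$ for all $s,t,i$, i.e. $\|\rho_{\tau_\gamma}(b_i)-\rho_\tau(b_i)\|\to0$, which is (ii). Apart from spelling out this positivity of the denominators and the bookkeeping of the determinant expansion, the remaining verifications are routine.
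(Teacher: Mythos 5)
Your proposal is correct and follows essentially the same route as the paper: apply Lemma 3.1.1 to $\tau$ and to each $\tau_\gamma$ on the fixed space $\Bbb C^d$ with distinguished vector $e_1$, then deduce (ii) from entrywise convergence of the matrices $[b(s,t:i,\psi)]$. The only difference is that the paper simply asserts $b(s,t:i,\tau_\gamma)\to b(s,t:i,\tau)$ "by the choices of" the Gram--Schmidt vectors, whereas you spell out the cofactor expansion and the non-degeneracy of the normalizing denominators --- a worthwhile filling-in of detail, not a different argument.
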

\begin{proof}
Note  $\mathcal B$ is a finite dimensional C$^*$-algebras with a
basis $1,b_1,\ldots, b_{d-1}$ and
$\{\tau,\tau_\gamma\}_{\gamma=1}^\infty$ is a family of faithful
tracial states of $\mathcal B$.  $\Bbb C^d$ is a $d$-dimensional
complex Hilbert space with an orthonormal basis $e_1,\ldots, e_d$.
By Lemma 3.1.1, there is a sequence of  faithful unital
$*$-representations $\rho_{\tau}, \rho_{\tau_\gamma} :\mathcal
B\rightarrow \mathcal M_d(\Bbb C)$ of $\mathcal B$ on $\Bbb C^d$ for
$\gamma=1,2\ldots$ such that \begin{enumerate}
 \item [(iii)] $(\rho_{\tau}, \Bbb C^d, e_1)$  and  $(\rho_{\tau_\gamma}, \Bbb C^d, e_1)$  are
  GNS representations
 of $(\mathcal B, \tau)$, and $ (\mathcal B,  \tau_\gamma)$
 respectively.
  \item [(iv)] Moreover, for each $1\le i\le d-1$,
   $$
     \rho_{\tau}(b_i) = B_{i,\tau}=\left [ b(s,t:i,\tau) \right
     ]_{s,t=1}^d \in \mathcal M_d(\Bbb C)
   $$ satisfying
   $$
b(s,t:i,\tau)= \tau(p_t(1,b_1,\ldots, b_{d-1}:\tau)^* b_i
p_s(1,b_1,\ldots, b_{d-1}:\tau));
   $$ and, for $\gamma=1,2,\ldots$
$$
     \rho_{\tau_\gamma}(b_i) = B_{i,\tau_\gamma}=\left [ b(s,t:i,\tau_\gamma) \right
     ]_{s,t=1}^d \in \mathcal M_d(\Bbb C)
   $$ satisfying
   $$
b(s,t:i,\tau_\gamma)= \tau_\gamma(p_t(1,b_1,\ldots,
b_{d-1}:\tau_\gamma)^* b_i p_s(1,b_1,\ldots, b_{d-1}:\tau_\gamma)).
   $$
 \end{enumerate}
Since
$$
\lim_{\gamma\rightarrow \infty} \tau_{\gamma}(b)=\tau(b),  \qquad
\forall \ b\in \mathcal B,
$$
by the choices of $$1=p_1(1,b_1,  \ldots,  b_{d-1}:\tau), \ \
\ldots, \ \ p_d(1,b_1,\ldots, b_{d-1}:\tau)$$ and
$$1=p_1(1,b_1,\ldots, b_{d-1}:\tau_\gamma), \ \ \ldots, \ \ p_d(1,b_1,\ldots,
b_{d-1}:\tau_\gamma)$$ in the discussion before Lemma 3.1.1, we know
that
$$
\lim_{\gamma\rightarrow \infty} b(s,t:i,\tau_\gamma)= b(s,t:i,\tau).
$$ It follows that
$$
\lim_{\gamma\rightarrow \infty}
\|\rho_{\tau_\gamma}(b_i)-\rho_{\tau}(b_i)\|\le
\lim_{\gamma\rightarrow \infty} d^2 \left (\max_{1\le s,t\le d} |
b(s,t:i,\tau_\gamma)- b(s,t:i,\tau)  |\right ) =0, \quad \forall \
1\le i \le d-1.
$$
\end{proof}

\begin{definition}
Suppose that $\mathcal A$ and $\mathcal B$ are separable unital
C$^*$-algebras. Let $\epsilon>0$ be a positive number. Suppose that
$x_1,\ldots, x_n$ is a family of elements in $\mathcal A$. Then we
call
$$ \{x_1,\ldots, x_n\}\subseteq_\epsilon \mathcal B
$$ if the following holds:
\begin{enumerate}
  \item [] There are
 (i) $y_1,\ldots, y_n$ in $\mathcal B$ and (ii)  unital faithful
  $^*$-representations $\rho_1: \mathcal A\rightarrow B(\mathcal H)$ and
  $\rho_2:\mathcal B\rightarrow B(\mathcal H)$ on a Hilbert space
  $\mathcal H$ such that
  $$
\max_{1\le i\le n} \|\rho_1(x_i)-\rho_2(y_i)\| \le \epsilon.
  $$
\end{enumerate}
\end{definition}

\begin{lemma}
     Suppose that $\mathcal A$ is a separable unital C$^*$-algebra
     with a faithful tracial state $\psi$. Suppose that $\mathcal B$
     is a finite dimensional C$^*$-algebra with a family
     $\{\tau,\tau_{\gamma}\}_{\gamma=1}^\infty$ of faithful tracial
     states of $\mathcal B$ such that
     $$
           \lim_{\gamma\rightarrow
           \infty}\tau_{\gamma}(b)=\tau(b),\qquad \forall \
           b\in\mathcal B.
     $$

     Suppose that $x_1,\ldots, x_n$ is a family of elements in $ (\mathcal A,\psi)*_{red}(\mathcal
     B,\tau)$.
     Then, for any $\epsilon>0$, there is a $\gamma_0>0$ such that
     $$
        \{ x_1,\ldots, x_n\}
         \subseteq_\epsilon (\mathcal A,\psi)*_{red}(\mathcal
         B,\tau_\gamma), \qquad \forall \ \gamma>\gamma_0.
     $$
\end{lemma}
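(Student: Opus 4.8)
The plan is to realize both reduced free products $(\mathcal A,\psi)*_{red}(\mathcal B,\tau)$ and $(\mathcal A,\psi)*_{red}(\mathcal B,\tau_\gamma)$ on one and the same Hilbert space, in such a way that they differ only through the $\mathcal B$-leg of Voiculescu's construction, and then to transfer a fixed polynomial approximation of each $x_i$ from the $\tau$-picture to the $\tau_\gamma$-picture. Write $\lambda_{\mathcal A}^{(\tau)},\lambda_{\mathcal B}^{(\tau)}$ for the defining representations of $\mathcal A,\mathcal B$ inside $(\mathcal A,\psi)*_{red}(\mathcal B,\tau)$ of Section 2.3, and $\lambda_{\mathcal A}^{(\tau_\gamma)},\lambda_{\mathcal B}^{(\tau_\gamma)}$ for the corresponding ones with $\tau$ replaced by $\tau_\gamma$. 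Since $\mathcal B$ is finite dimensional, Lemma 3.1.1 lets one take the GNS Hilbert space of $(\mathcal B,\tau)$ and of each $(\mathcal B,\tau_\gamma)$ to be the \emph{same} space $\Bbb C^d$ (with $d=\dim_{\Bbb C}\mathcal B$), with the \emph{same} distinguished cyclic unit vector $e_1$, carrying faithful unital $*$-representations $\rho_\tau,\rho_{\tau_\gamma}:\mathcal B\to\mathcal M_d(\Bbb C)$; Lemma 3.1.2 then yields $\|\rho_{\tau_\gamma}(b)-\rho_\tau(b)\|\to 0$ for every $b\in\mathcal B$ (for the basis elements by Lemma 3.1.2, hence for all $b$ by linearity). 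With this choice the $\mathcal A$-side GNS triple and the subspaces $\stackrel{\circ}{\mathcal H}_{\mathcal A}$ and $\stackrel{\circ}{\mathcal H}_{\mathcal B}=\Bbb C^d\ominus\Bbb C e_1$ do not depend on whether we use $\tau$ or $\tau_\gamma$, so the Hilbert space free product $\mathcal H$, the auxiliary spaces $\mathcal H(\mathcal A),\mathcal H(\mathcal B)$, and the unitaries $V_{\mathcal A},V_{\mathcal B}$ of Section 2.3 are literally one and the same for $\tau$ and for $\tau_\gamma$. Hence $\lambda_{\mathcal A}^{(\tau)}=\lambda_{\mathcal A}^{(\tau_\gamma)}$, while on the $\mathcal B$-side
$$
\lambda_{\mathcal B}^{(\tau)}(b)-\lambda_{\mathcal B}^{(\tau_\gamma)}(b)=V_{\mathcal B}\big((\rho_\tau(b)-\rho_{\tau_\gamma}(b))\otimes I_{\mathcal H(\mathcal B)}\big)V_{\mathcal B}^{*},
$$
so $\|\lambda_{\mathcal B}^{(\tau)}(b)-\lambda_{\mathcal B}^{(\tau_\gamma)}(b)\|=\|\rho_\tau(b)-\rho_{\tau_\gamma}(b)\|\to 0$ as $\gamma\to\infty$, for every $b\in\mathcal B$. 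Both free products are then C$^*$-subalgebras of $B(\mathcal H)$, and the inclusions into $B(\mathcal H)$ are unital faithful $*$-representations; these will serve as $\rho_1$ on $(\mathcal A,\psi)*_{red}(\mathcal B,\tau)$ (which contains $x_1,\dots,x_n$) and $\rho_2$ on $(\mathcal A,\psi)*_{red}(\mathcal B,\tau_\gamma)$ in Definition 3.1.1.

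Next I would use that, by definition, $(\mathcal A,\psi)*_{red}(\mathcal B,\tau)$ is the norm closure of the $*$-algebra generated by $\lambda_{\mathcal A}^{(\tau)}(\mathcal A)\cup\lambda_{\mathcal B}^{(\tau)}(\mathcal B)$. So, given $\epsilon>0$, one can pick finitely many $a_1,\dots,a_p\in\mathcal A$ and $b_1,\dots,b_q\in\mathcal B$ (pooled over $i$) and noncommutative polynomials $q_1,\dots,q_n$ with
$$
\big\|x_i-q_i\big(\lambda_{\mathcal A}^{(\tau)}(a_1),\dots,\lambda_{\mathcal A}^{(\tau)}(a_p),\lambda_{\mathcal B}^{(\tau)}(b_1),\dots,\lambda_{\mathcal B}^{(\tau)}(b_q)\big)\big\|<\tfrac{\epsilon}{2},\qquad 1\le i\le n.
$$
Set $y_i:=q_i\big(\lambda_{\mathcal A}^{(\tau_\gamma)}(a_1),\dots,\lambda_{\mathcal A}^{(\tau_\gamma)}(a_p),\lambda_{\mathcal B}^{(\tau_\gamma)}(b_1),\dots,\lambda_{\mathcal B}^{(\tau_\gamma)}(b_q)\big)\in(\mathcal A,\psi)*_{red}(\mathcal B,\tau_\gamma)$. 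Since $\lambda_{\mathcal A}^{(\tau)}(a_s)=\lambda_{\mathcal A}^{(\tau_\gamma)}(a_s)$ and $\|\lambda_{\mathcal B}^{(\tau)}(b_t)-\lambda_{\mathcal B}^{(\tau_\gamma)}(b_t)\|\to 0$, while all these operators are bounded uniformly in $\gamma$ (by $\|a_s\|$ and $\|b_t\|$, since $*$-representations are contractive and conjugation by a unitary is isometric), the norm continuity of polynomial evaluation on bounded sets produces a $\gamma_0$ with
$$
\big\|q_i\big(\lambda_{\mathcal A}^{(\tau)}(a_1),\dots,\lambda_{\mathcal B}^{(\tau)}(b_q)\big)-q_i\big(\lambda_{\mathcal A}^{(\tau_\gamma)}(a_1),\dots,\lambda_{\mathcal B}^{(\tau_\gamma)}(b_q)\big)\big\|<\tfrac{\epsilon}{2}
$$
for all $\gamma>\gamma_0$ and all $i$. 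Adding the two estimates and using the triangle inequality in $B(\mathcal H)$ gives $\|x_i-y_i\|_{B(\mathcal H)}<\epsilon$ for all $i$ and all $\gamma>\gamma_0$, which is precisely $\{x_1,\dots,x_n\}\subseteq_\epsilon(\mathcal A,\psi)*_{red}(\mathcal B,\tau_\gamma)$.

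The part I expect to be the main obstacle is the bookkeeping in the first step: checking that, under the canonical Gram-Schmidt identification of Lemma 3.1.1, the Hilbert space $\mathcal H$ and the unitaries $V_{\mathcal A},V_{\mathcal B}$ are genuinely independent of $\tau$ versus $\tau_\gamma$, so that the two reduced free products really sit on the same $\mathcal H$ and differ only through $\rho_\tau$ versus $\rho_{\tau_\gamma}$ on the $\mathcal B$-leg. This is exactly where finite-dimensionality of $\mathcal B$ is used: it is what lets the GNS spaces be identified compatibly with the distinguished cyclic vectors \emph{and} with the norm convergence $\rho_{\tau_\gamma}\to\rho_\tau$ of Lemma 3.1.2. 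Once that is granted, the rest is the routine polynomial-approximation estimate above.
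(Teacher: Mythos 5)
Your proposal is correct and follows essentially the same route as the paper: both realize the two reduced free products on one common free-product Hilbert space by identifying the GNS spaces of $(\mathcal B,\tau)$ and $(\mathcal B,\tau_\gamma)$ with the same $\Bbb C^d$ and cyclic vector $e_1$ via Lemmas 3.1.1--3.1.2, observe that the $\mathcal A$-legs coincide while the $\mathcal B$-legs converge in norm, and then transfer a fixed polynomial approximation of the $x_i$ using norm continuity of polynomial evaluation on bounded sets. The only differences are cosmetic ($\epsilon/2+\epsilon/2$ versus the paper's $\epsilon/3$ splitting, and using general elements of $\mathcal B$ rather than the fixed basis, which is equivalent by finite-dimensionality).
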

\begin{proof} Note that $\mathcal B$
     is a finite dimensional C$^*$-algebra. Assume that $1, b_1,\ldots, b_{d-1}$ is a basis of $\mathcal B$, where
      $d$ is the complex dimension of $\mathcal B$. Let $\Bbb C^d$ be a $d$-dimensional complex Hilbert space with
an orthonormal basis $e_1,\ldots, e_d$. By Lemma 3.1.2,   there is a
sequence of faithful unital $*$-representations $\rho_{\tau},
\rho_{\tau_\gamma} :\mathcal B\rightarrow \mathcal M_d(\Bbb C)$ of
$\mathcal B$ on $\Bbb C^d$ for $\gamma=1,2\ldots$ such that
\begin{enumerate}
 \item [(i)] $(\rho_{\tau}, \Bbb C^d, e_1)$ and  $(\rho_{\tau_\gamma}, \Bbb C^d, e_1)$  are
  GNS representations
 of $(\mathcal B, \tau)$, and $ (\mathcal B,  \tau_\gamma)$
 respectively.
   \item [(ii)] For each $1\le i\le d-1$,
   \begin{equation}
\lim_{\gamma\rightarrow \infty}
\|\rho_{\tau_\gamma}(b_i)-\rho_{\tau}(b_i)\|=0
   \end{equation}
   \end{enumerate}

Let $(\pi, \mathcal H_1, \xi_1)$ be the GNS representation of
$(\mathcal A,\psi)$ on a Hilbert space $\mathcal H_1$ such that
$\xi_1$ is cyclic for $\pi(\mathcal A)$ and $\psi(a)=\langle
\pi(a)\xi_1,\xi_1\rangle$ for all $a$ in $\mathcal A$.

Let $(\mathcal H_2,\xi_2)=(\Bbb C^d, e_1)$ and $\mathcal H$ be the
free product of Hilbert spaces $(\mathcal H_1,\xi_1)$ and $(\mathcal
H_2,\xi_2)$ as in Section 2.3. Let $\stackrel{\circ}{\mathcal H}_i$
and $\mathcal H(i)$ be defined as in Section 2.3 for $i=1,2$ and
$V_1,V_2$ be the unitary operators as defined in Section 2.3. Let
$\lambda$ be the representation of $\mathcal A$ and $\mathcal B$ on
the Hilbert space $\mathcal H$ defined as follows:
   \begin{align}
    \lambda(a) &= V_1(\pi(a)\otimes I_{\mathcal H(1)})V_1^*, \qquad
    \forall \ a\in \mathcal A;\\
    \lambda (b) &= V_2(\rho_\tau(b)\otimes I_{\mathcal H(2)})V_2^*,
    \qquad \forall \ b\in\mathcal B;
   \end{align}
 Let $\lambda_\gamma$, $\gamma=1,2,\ldots,$ be a sequence of
representations of $\mathcal A$ and $\mathcal B$ on the Hilbert
space $\mathcal H$ defined as follows:
   \begin{align}
    \lambda_\gamma(a) &= V_1(\pi(a)\otimes I_{\mathcal H(1)})V_1^*, \qquad
    \forall \ a\in \mathcal A;\\
    \lambda_\gamma (b) &= V_2(\rho_\gamma(b)\otimes I_{\mathcal H(2)})V_2^*,
    \qquad \forall \ b\in\mathcal B;
   \end{align}
Then by the definition of reduced free product   in Section 2.3, we
know that \begin{enumerate}\item[(a)] $(\mathcal
A,\psi)*_{red}(\mathcal
     B,\tau)$ is the unital C$^*$-subalgebra of $B(\mathcal H)$ generated by
     $$
    \{ \lambda(a) \ | \ a\in\mathcal A\}
    \cup  \{\lambda (b) \ | \ b\in\mathcal
    B\};
     $$
\item [(b)]  $(\mathcal A,\psi)*_{red}(\mathcal
     B,\tau_\gamma)$ is the unital C$^*$-subalgebra of $B(\mathcal H)$ generated by
    \begin{equation*}
    \{ \lambda_\gamma(a)  \ | \ a\in\mathcal A\}
    \cup  \{\lambda_\gamma (b)  \ | \ b\in\mathcal
    B\}.
   \end{equation*} \end{enumerate}
Moreover, by (3.1.2) and (3.1.4), we know that
\begin{equation}
 \lambda (a)= \lambda_\gamma(a) ,\qquad \forall \ a\in \mathcal A
\end{equation}
 By (3.1.1), (3.1.3) and (3.1.5), we know that, for
$1\le i\le d-1$,
 \begin{equation}
\lim_{\gamma\rightarrow \infty} \|\lambda(b_i)-\lambda_\gamma(b_i)\|
= \lim_{\gamma\rightarrow \infty} \|V_2(\rho_\gamma(b_i)\otimes
 I_{\mathcal H(2))}V_2^*-V_2(\rho_\tau (b_i)\otimes  I_{\mathcal H(2))}V_2^*\|=0.
 \end{equation}

Since $x_1,\ldots,x_n$ are in $(\mathcal A,\psi)*_{red}(\mathcal
     B,\tau)$, there are elements $a_1,\ldots, a_N$ in $\mathcal A$ and
     noncommutative polynomials $P_1,\ldots, P_n$ such that, for
     all $1\le j\le n$,
     $$\begin{aligned}
   &\|x_j-P_j(\lambda(a_1), \ldots, \lambda(a_N),
\lambda(b_1),\ldots, \lambda(b_{d-1})
   )\|  \le \epsilon/3. \end{aligned}
     $$
On the other hand, by (3.1.6) and (3.1.7), we know  when $\gamma$ is
large enough, for all $ 1\le j\le n$.
$$
  \begin{aligned}
   &\| P_j(\lambda(a_1), \ldots, \lambda(a_N),
\lambda(b_1),\ldots, \lambda(b_{d-1})
   ) - P_j(\lambda_\gamma(a_1), \ldots, \lambda_\gamma(a_N),
\lambda_\gamma(b_1),\ldots, \lambda_\gamma(b_{d-1})
   ) \|  \le \epsilon /3.
  \end{aligned}
$$
Therefore, when $\gamma$ is large enough,
    $$  \begin{aligned}
  & \|x_j-P_j(\lambda_\gamma(a_1), \ldots, \lambda_\gamma(a_N),
\lambda_\gamma(b_1),\ldots, \lambda_\gamma(b_{d-1})
   )\| \le \epsilon.\end{aligned}
     $$ I.e.  when $\gamma$ is large enough, we have
$
        \{ x_1,\ldots, x_n\}
         \subseteq_\epsilon (\mathcal A,\psi)*_{red}(\mathcal
         B,\tau_\gamma) .
     $\end{proof}
\subsection{Reduced free products of matrix algebras}
In this subsection, we will show that the reduced free products of
matrix algebras with respect to given tracial states are  MF
algebras. Recall the following remarkable result of Haagerup and
Thorbj{\o}rnsen.
\begin{lemma}
  [Haagerup and Thorbj{\o}rnsen] For all positive integer $n\ge 2$,
  $C_r^*(F_n)$ is an MF algebra, where $F_n$ is the nonabelian free group on $n$ generators and $C_r^*(F_n)$ is the
  reduced group C$^*$-algebra of the free group $F_n$.
\end{lemma}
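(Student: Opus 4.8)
The plan is to read this statement as a quotation of the main theorem of Haagerup and Thorbj{\o}rnsen \cite{Haag}, together with an elementary reduction of the general case $n\ge 2$ to the single case $n=2$. The whole difficulty sits in the case $n=2$, which \cite{Haag} establishes by random matrix methods, and I would not reproduce that argument; I only recall its shape. Haagerup and Thorbj{\o}rnsen show that operator norms of noncommutative $*$-polynomials in the free generators of $F_n$ are recovered as limits of operator norms of the same polynomials evaluated at suitable finite families of matrices: for instance, if $U_1^{(k)},\dots,U_n^{(k)}$ are chosen independently and Haar-distributed in $\mathcal M_k(\Bbb C)$, then
\[
\lim_{k\to\infty}\left\| P\left(U_1^{(k)},\dots,U_n^{(k)}\right)\right\|
=\left\| P\left(\lambda(g_1),\dots,\lambda(g_n)\right)\right\|
\]
almost surely for every noncommutative $*$-polynomial $P$, where $g_1,\dots,g_n$ generate $F_n$ and $\lambda$ is the left regular representation. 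Writing $\lambda(g_j)=h_{2j-1}+\sqrt{-1}\,h_{2j}$ with each $h_\ell$ self-adjoint, the elements $h_1,\dots,h_{2n}$ generate $C_r^*(F_n)$, and applying the above convergence to any finite family of prescribed polynomials yields, for every $\epsilon>0$, a single $k$ and self-adjoint matrices in $\mathcal M_k(\Bbb C)$ whose polynomial norms agree with those of $h_1,\dots,h_{2n}$ to within $\epsilon$. Since the $*$-polynomials in $\lambda(g_1),\dots,\lambda(g_n)$ are norm dense in $C_r^*(F_n)$, a routine density argument reduces the polynomial criterion for MF algebras (Lemma 2.4.1) to this generating family, and hence $C_r^*(F_n)$ --- in particular $C_r^*(F_2)$ --- is an MF algebra.

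To recover all $n\ge 2$ from the case $n=2$ alone, I would pass to subalgebras. The group $F_2=\langle a,b\rangle$ contains a free subgroup of countably infinite rank, freely generated by $\{a^{\,m}ba^{-m}:m\in\Bbb Z\}$, namely the kernel of the homomorphism $F_2\to\Bbb Z$ with $a\mapsto 1$ and $b\mapsto 0$; hence $F_n$ embeds as a subgroup of $F_2$ for every $n$. For any inclusion $H\subseteq G$ of discrete groups, $l^2(G)$ decomposes as an orthogonal direct sum of the subspaces $l^2(Hc)$ indexed by the right cosets, each of which is invariant under left translation by $H$ and carries a copy of the left regular representation of $H$; therefore $\lambda_G|_H$ is unitarily equivalent to a multiple of $\lambda_H$, so the inclusion $\Bbb C[H]\hookrightarrow\Bbb C[G]$ extends to an isometric unital $*$-embedding $C_r^*(H)\hookrightarrow C_r^*(G)$. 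Thus $C_r^*(F_n)$ is a separable C$^*$-subalgebra of the MF algebra $C_r^*(F_2)$, and by Remark 2.4.1 it is itself an MF algebra. So $C_r^*(F_n)$ is an MF algebra for every $n\ge 2$.

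The sole hard step is the one I am taking as a black box: the strong asymptotic freeness estimate of \cite{Haag}, that independent random matrices converge to the free generators of $F_n$ in operator norm and not merely in $*$-distribution. Everything else is routine --- rewriting the unitary generators in terms of self-adjoint ones and matching the norms of finitely many polynomials, and the passage to the subalgebra $C_r^*(F_n)\subseteq C_r^*(F_2)$. The one place where a little care is needed is that a subgroup inclusion induces an embedding of the \emph{reduced} group C$^*$-algebras, and for this the coset decomposition of $l^2(G)$ described above is exactly what is required.
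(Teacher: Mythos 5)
The paper offers no proof of this lemma at all---it is quoted as a known theorem of Haagerup and Thorbj{\o}rnsen---and your proposal does essentially the same thing, while adding a correct and self-contained reduction of the case $n\ge 2$ to the case $n=2$ via the subgroup embedding $F_n\hookrightarrow F_2$ (kernel of $F_2\to\Bbb Z$), the coset decomposition showing $C_r^*(H)\hookrightarrow C_r^*(G)$ for $H\le G$ discrete, and the fact (Remark 2.4.1) that separable C$^*$-subalgebras of MF algebras are MF. One small caveat on your description of the black box: the almost-sure norm convergence for independent \emph{Haar-distributed unitaries} is actually a later theorem of Collins and Male, whereas \cite{Haag} proves strong convergence for independent GUE matrices to a free semicircular family and one then passes to $C_r^*(F_n)$ by manufacturing free Haar unitaries from the semicircular elements via continuous functional calculus---but since you explicitly treat this step as a citation, the logic of your argument is unaffected.
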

The following result can be found in Thereom 4.1 in \cite{HaSh5}.
\begin{lemma}
Suppose $\mathcal A$ is a unital MF algebra and $G$ is a finite
group. Suppose that $\alpha: G\rightarrow Aut(\mathcal A)$ is a
homomorphism from $G$ into $Aut(\mathcal A)$. Then the reduced
crossed product, $\mathcal A\rtimes _{\alpha,r} G$, of $\mathcal A$
by $G$ is an MF algebra.
\end{lemma}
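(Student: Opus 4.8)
The plan is to realize $\mathcal A\rtimes_{\alpha,r}G$ concretely as a C$^*$-subalgebra of $\mathcal M_{|G|}(\mathcal A)$ and then invoke two elementary stability properties of MF algebras: that $\mathcal M_N(-)$ of an MF algebra is again MF, and that a separable C$^*$-subalgebra of an MF algebra is MF (the Remark in Section 2.4). Put $N=|G|$, fix a faithful unital $*$-representation $\rho:\mathcal A\rightarrow B(\mathcal H)$, and build $\mathcal A\rtimes_{\alpha,r}G$ inside $B(\mathcal K)$, with $\mathcal K=\mathcal H\otimes\ell^2(G)$, exactly as in Section 2.2. Identifying $\ell^2(G)$ with $\Bbb C^N$ through the orthonormal basis $\{e_g\}_{g\in G}$ gives $B(\mathcal K)=\mathcal M_N(B(\mathcal H))$.

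In this matricial picture the generators take a very simple form. For $x\in\mathcal A$ the operator $\sigma(x)$ is, by its defining formula $\sigma(x)(\xi\otimes e_g)=(\alpha^{-1}(g)(x)\xi)\otimes e_g$, the diagonal matrix in $\mathcal M_N(B(\mathcal H))$ whose $g$-th diagonal entry is $\rho(\alpha^{-1}(g)(x))$; and for $g\in G$ the operator $\sigma(g)=1\otimes\lambda(g)$ is a permutation matrix, i.e.\ an element of $\mathcal M_N(\Bbb C)\subseteq\mathcal M_N(B(\mathcal H))$. Since $\alpha^{-1}(g)$ is an automorphism of $\mathcal A$, each $\alpha^{-1}(g)(x)$ lies in $\mathcal A$, and since $\rho$ is unital every scalar also lies in $\rho(\mathcal A)$; hence all $\mathcal M_N(B(\mathcal H))$-entries of $\sigma(x)$ and of $\sigma(g)$ lie in $\rho(\mathcal A)$. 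Therefore the C$^*$-algebra they generate, which is $\mathcal A\rtimes_{\alpha,r}G$, is contained in $\mathcal M_N(\rho(\mathcal A))$, and the latter is isomorphic to $\mathcal M_N(\mathcal A)$ because $\rho$ is a $*$-isomorphism of $\mathcal A$ onto $\rho(\mathcal A)$.

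Finally I would note that $\mathcal M_N(\mathcal A)$ is MF whenever $\mathcal A$ is: an embedding $\mathcal A\hookrightarrow\prod_k\mathcal M_{n_k}(\Bbb C)/\sum_k\mathcal M_{n_k}(\Bbb C)$ yields, upon applying $\mathcal M_N(-)$ and using $\mathcal M_N\big(\prod_k\mathcal M_{n_k}(\Bbb C)/\sum_k\mathcal M_{n_k}(\Bbb C)\big)\cong\prod_k\mathcal M_{Nn_k}(\Bbb C)/\sum_k\mathcal M_{Nn_k}(\Bbb C)$, an embedding of $\mathcal M_N(\mathcal A)$ into a quotient of the required shape, and $\mathcal M_N(\mathcal A)$ is separable. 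Since $\mathcal A\rtimes_{\alpha,r}G$ is a separable C$^*$-subalgebra of the MF algebra $\mathcal M_N(\mathcal A)$, it is MF by the Remark in Section 2.4, completing the argument.

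There is no genuinely hard step here; the only thing requiring care is the entry-by-entry bookkeeping in the second paragraph — confirming that in the $\mathcal H\otimes\ell^2(G)$ representation the generators $\sigma(x)$ and $\sigma(g)$ really have all their matrix entries in $\rho(\mathcal A)$ — after which the inclusion $\mathcal A\rtimes_{\alpha,r}G\subseteq\mathcal M_{|G|}(\mathcal A)$ and hence the conclusion follow mechanically. One could alternatively bypass the representation entirely: for finite $G$ the reduced and full crossed products coincide, and the regular embedding $\mathcal A\rtimes_\alpha G\hookrightarrow\mathcal M_{|G|}(\mathcal A)$ is the classical fact underlying this proof.
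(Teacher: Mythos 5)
Your proposal is correct and follows essentially the same route as the paper: the paper also embeds $\mathcal A\rtimes_{\alpha,r}G$ into $\mathcal A\otimes B(l^2(G))\cong\mathcal M_{|G|}(\mathcal A)$ by observing that $\sigma(x)=\sum_{g}\alpha_g^{-1}(x)\otimes E_g$ and $\sigma(g)=I\otimes\lambda(g)$ lie in that algebra, and then uses that $\mathcal M_{|G|}(\mathcal A)$ is MF (citing Proposition 3.3.6 of Blackadar--Kirchberg, where you verify it directly) together with hereditarity of MF under passing to separable C$^*$-subalgebras. The only difference is cosmetic: you spell out the diagonal/permutation-matrix bookkeeping and the $\mathcal M_N(-)$ stability argument that the paper delegates to a citation.
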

\begin{proof}
For the purpose of completeness, we sketch its proof here. In fact
the proof follows directly from the definition of reduced crossed
product in section 2.2.

Recall the definition of reduced crossed product as follows. Assume
that $\mathcal A$ acts on a Hilbert space $\mathcal H$. Let $l^2(G)$
be the Hilbert space associated to the group $G$ with an orthonormal
basis $\{e_{g}\}_{g\in G}$ and $\lambda$ be the left regular
representation of $G$ on $l^2(G)$. Let $E_g$ be the rank one
projection from $l^2(G)$ onto the vector $e_g$ in $l^2(G)$. Let
$\sigma$ be the representation of $\mathcal A$ and $G$ on $\mathcal
H\otimes l^2(G)$ induced by the following mapping:
$$\begin{aligned}
\sigma(g)& = I_{\mathcal H}\otimes \lambda (g),\qquad \forall \ g\in
G;\\
\sigma (x) &=\sum_{g\in G} \alpha_g^{-1}(x)\otimes E_g, \qquad
\forall \ x\in\mathcal A. \end{aligned}
$$ Then the reduced crossed product,
$\mathcal A\rtimes_{\alpha,r} G$, of $\mathcal A$ by $G$ is the
C$^*$-subalgebra generated by $\{\sigma(g)\}_{g\in G}\cup
\{\sigma(x)\}_{x\in\mathcal A}$ in $B(\mathcal H\otimes l^2(G))$.

Note $G$ is a finite group. Then $B(l^2(G))\simeq \mathcal M_k(\Bbb
C)$ for some positive integer $k$. Moreover, for all $g\in G$ and
$x\in \mathcal A$, $\sigma(g)$ and $\sigma(x)$ are in $\mathcal
A\otimes B(l^2(G))$. Since $\mathcal A$ is an MF algebra, $\mathcal
A\otimes B(l^2(G))$ is also an MF algebra (see Proposition 3.3.6 in
\cite{BK}). It follows that $\mathcal A\rtimes_{\alpha,r} G$, as a
C$^*$-subalgebra of $\mathcal A\otimes B(l^2(G))$, is also an MF
algebra. This completes the proof of the lemma.
\end{proof}
A quick corollary of the preceding lemma is the following statement.
\begin{corollary}
For any positive integer $n\ge 2$, $C_r^*(\Bbb Z_n*F_n)$ is an MF
algebra, where $\Bbb Z_n$ is the quotient group $\Bbb Z/n\Bbb Z$
and $\Bbb Z_n*F_n$ is the free product of group $\Bbb Z_n$ and the
free group $F_n$.
\end{corollary}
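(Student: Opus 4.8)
The plan is to realize $C_r^*(\Bbb Z_n*F_n)$ as a reduced crossed product of the reduced group C$^*$-algebra of a finitely generated nonabelian free group by the finite group $\Bbb Z_n$, and then to apply Lemma 3.2.1 (Haagerup and Thorbj{\o}rnsen) together with Lemma 3.2.2.

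First I would analyze the group $G=\Bbb Z_n*F_n$. Let $\varphi:G\to\Bbb Z_n$ be the homomorphism that is the identity on the $\Bbb Z_n$ factor and trivial on $F_n$, and set $N=\ker\varphi$, a normal subgroup of index $n$. Since the inclusion of the free factor $\Bbb Z_n\hookrightarrow G$ is a section of $\varphi$, the extension $1\to N\to G\to\Bbb Z_n\to 1$ splits, so $G\cong N\rtimes_\beta\Bbb Z_n$ where $\beta$ is the conjugation action of $\Bbb Z_n$ on $N$. By the Kurosh subgroup theorem $N$ is a free product of a free group with conjugates of $N\cap\Bbb Z_n$ and of $N\cap F_n$; by normality $N\cap g\Bbb Z_n g^{-1}=g(N\cap\Bbb Z_n)g^{-1}=\{e\}$, while $N\cap F_n=F_n$ since $\varphi$ annihilates $F_n$. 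Hence $N$ is a free group, and having finite index in a finitely generated group it is free of finite rank; since $F_n\subseteq N$ and $n\ge 2$, that rank is at least $2$ (an Euler characteristic computation in fact gives $N\cong F_{n^2}$).

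Next, the conjugation action $\beta$ induces an action $\alpha:\Bbb Z_n\to Aut(C_r^*(N))$, and the standard identification of the reduced group C$^*$-algebra of a semidirect product with a reduced crossed product yields
$$
C_r^*(\Bbb Z_n*F_n)\;=\;C_r^*(N\rtimes_\beta\Bbb Z_n)\;\cong\;C_r^*(N)\rtimes_{\alpha,r}\Bbb Z_n .
$$
Concretely, after choosing coset representatives one identifies $l^2(G)=l^2(N)\otimes l^2(\Bbb Z_n)$ and checks that the left regular representation of $G$ is unitarily equivalent to the covariant representation on $l^2(N)\otimes l^2(\Bbb Z_n)$ defining the reduced crossed product of $C_r^*(N)$ (in its left regular representation) by $\Bbb Z_n$. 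Since $N$ is a finitely generated free group of rank at least $2$, Lemma 3.2.1 shows $C_r^*(N)$ is an MF algebra; as $\Bbb Z_n$ is finite, Lemma 3.2.2 then shows $C_r^*(N)\rtimes_{\alpha,r}\Bbb Z_n$ is an MF algebra. Therefore $C_r^*(\Bbb Z_n*F_n)$ is an MF algebra.

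The only step that is more than bookkeeping is the isomorphism $C_r^*(N\rtimes_\beta\Bbb Z_n)\cong C_r^*(N)\rtimes_{\alpha,r}\Bbb Z_n$, a routine and well known fact about reduced group C$^*$-algebras of semidirect products. The one subtlety worth confirming is that the crossed product arising this way coincides with the model used in the proof of Lemma 3.2.2, namely the covariant representation on a Hilbert space of the form $\mathcal H\otimes l^2(\Bbb Z_n)$; this is immediate on taking $\mathcal H=l^2(N)$ with $C_r^*(N)$ in its left regular representation.
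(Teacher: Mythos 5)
Your proof is correct, but it takes a genuinely different route from the paper's. The paper keeps $F_n$ as the normal factor: it defines the cyclic permutation action $\alpha$ of $\Bbb Z_n$ on the generators $g_1,\ldots,g_n$ of $F_n$, exhibits elements $u,h_1,\ldots,h_n$ (with $h_i=g_1^ig_2^i\cdots g_n^i$) which it asserts are free in $F_n\rtimes_\alpha\Bbb Z_n$, so that $\Bbb Z_n*F_n$ embeds as a subgroup and hence $C_r^*(\Bbb Z_n*F_n)\subseteq C_r^*(F_n)\rtimes_{\alpha,r}\Bbb Z_n$; it then concludes with Lemma 3.2.1, Lemma 3.2.2, and the fact that a separable C$^*$-subalgebra of an MF algebra is MF (Remark 2.4.1). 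You instead split $\Bbb Z_n*F_n$ itself as $N\rtimes\Bbb Z_n$ with $N$ the kernel of the canonical retraction onto $\Bbb Z_n$, identify $N$ as a finitely generated free group of rank at least $2$ (in fact $F_{n^2}$) via the Kurosh subgroup theorem, and obtain the exact isomorphism $C_r^*(\Bbb Z_n*F_n)\cong C_r^*(N)\rtimes_{r}\Bbb Z_n$ before applying the same two lemmas. Your version trades the paper's combinatorial freeness claim about $u,h_1,\ldots,h_n$ (stated there without detailed verification) for a standard piece of geometric group theory, avoids the need for Remark 2.4.1, and yields the stronger structural conclusion that $C_r^*(\Bbb Z_n*F_n)$ \emph{is} a crossed product of the reduced C$^*$-algebra of a free group by $\Bbb Z_n$, not merely a subalgebra of one. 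The paper's setup with the permutation action is reused later in the proof of Lemma 3.2.3, which is presumably why the authors arrange the argument the way they do.
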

\begin{proof}
Assume that $u$ is a natural generator of $\Bbb Z_n$, i.e. $u^n=e$
and $u^j\ne e$ for all $1\le j<n$. Assume that $g_1,\ldots, g_n$ are
the natural generators of $F_n$. Let $\alpha$ be an action of $\Bbb
Z_n$ on $F_n$ induced by the following mapping:
$$
\alpha(u)(g_i)=g_{i+1} \quad for \ 1\le i\le n-1, \qquad and \ \ \
\alpha(u)(g_n)=g_1.
$$Let $h_i=g_1^ig_2^i\ldots
g_n^i$ for $i=1,2,\ldots, n$ be elements in $F_n$. Then we observe
that as elements in $ F_n\rtimes_{\alpha} \Bbb Z_n$, the elements
$u$, $h_1$,\ldots, $h_n$ are free in $ F_n\rtimes_{\alpha} \Bbb
Z_n$. In other words, $\Bbb Z_n*F_n$ can be viewed as a subgroup of
$ F_n\rtimes_{\alpha} \Bbb Z_n$. Therefore,
$$
C_r^*(\Bbb Z_n*F_n)\subseteq C_r^*(F_n\rtimes_{\alpha} \Bbb
Z_n)=C_r^*(F_n)\rtimes_{\alpha,r} \Bbb Z_n.
$$
By Haagerup and Thorbj{\o}rnsen's result and Lemma 3.2.2, we know
that $ C_r^*(\Bbb Z_n*F_n)$ is an MF algebra.
\end{proof}

\begin{lemma}
For any $n\ge 2$, let $\tau_n$ be the normalized trace on
$\mathcal M_n(\Bbb C)$. Then
$$
(\mathcal M_n(\Bbb C),\tau_n)*_{red} (\mathcal M_n(\Bbb C),\tau_n)
$$ is an MF algebra.
\end{lemma}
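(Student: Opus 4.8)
The plan is to realize $(\mathcal M_n(\mathbb C),\tau_n)*_{red}(\mathcal M_n(\mathbb C),\tau_n)$ as a C$^*$-subalgebra of a reduced free product that is manifestly an MF algebra, namely one built from reduced group C$^*$-algebras covered by Corollary 3.2.1. The key observation is that $\mathcal M_n(\mathbb C)$ with its normalized trace arises as a corner (or a matrix amplification) of $C_r^*(\mathbb Z_n)$: more precisely, $\mathcal M_n(\mathbb C) \cong C_r^*(\mathbb Z_n) \otimes \mathcal M_n(\mathbb C)$ up to a change of trace, but a cleaner route is to note that $(\mathcal M_n(\mathbb C),\tau_n)$ sits inside $(C^*_r(\mathbb Z_n \ast F_n),\tau)$ in a way that respects freeness. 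Concretely, recall that $C_r^*(\mathbb Z_n)\cong \mathbb C^n$, while $\mathcal M_n(\mathbb C)$ is generated by a single unitary $u$ with $u^n=1$ together with an extra unitary; using the free group generators one can build the two commuting copies of the structure needed to generate two free copies of $\mathcal M_n(\mathbb C)$.

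**First** I would make precise which group C$^*$-algebra contains $(\mathcal M_n(\mathbb C),\tau_n)*_{red}(\mathcal M_n(\mathbb C),\tau_n)$. The algebra $\mathcal M_n(\mathbb C)$ is generated by two unitaries $s,t$ (a cyclic shift and a diagonal $n$-th root of unity) satisfying $s^n = t^n = 1$ and $ts = \omega st$ with $\omega = e^{2\pi i/n}$; equivalently $\mathcal M_n(\mathbb C)$ is a quotient of $C_r^*(\mathbb Z_n \ast \mathbb Z_n)$, and under the normalized trace this quotient map is trace-preserving onto $(\mathcal M_n(\mathbb C),\tau_n)$ (the trace on $\mathcal M_n$ pulls back to the canonical trace composed with... here one must check the relation $ts=\omega st$ is compatible — actually it is cleaner to use that $(\mathcal M_n(\mathbb C),\tau_n)$ is the \emph{reduced} free product quotient, but the simplest honest statement is that there is a surjective trace-preserving $*$-homomorphism $C_r^*(\mathbb Z_n \ast \mathbb Z_n) \twoheadrightarrow (\mathcal M_n(\mathbb C),\tau_n)$). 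Then by functoriality of the reduced free product with respect to trace-preserving surjections, $(\mathcal M_n(\mathbb C),\tau_n)*_{red}(\mathcal M_n(\mathbb C),\tau_n)$ is a quotient of $C_r^*\big((\mathbb Z_n\ast\mathbb Z_n)\ast(\mathbb Z_n\ast\mathbb Z_n)\big) = C_r^*(\mathbb Z_n^{\ast 4})$, and the free group $F_{4}$ has finite index in (or is a subgroup of) $\mathbb Z_n^{\ast 4}$, so $C_r^*(\mathbb Z_n^{\ast 4})$ embeds in $C_r^*(\mathbb Z_n \ast F_m)$-type algebras which Corollary 3.2.1 handles — and a quotient of an MF algebra is \emph{not} automatically MF, so this route needs care.

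**The correct and cleaner route** — and the one I would actually carry out — uses a \emph{subalgebra}, not a quotient. Observe $(\mathcal M_n(\mathbb C),\tau_n)*_{red}(\mathcal M_n(\mathbb C),\tau_n) \cong \mathcal M_n(\mathbb C) \otimes \big[(\mathbb C,\cdot)*_{red}(\cdots)\big]$ fails, so instead I invoke Dykema's identification: the reduced free product of two copies of $(\mathcal M_n,\tau_n)$ is known, but more to the point, by Remark 2.3.1 and the fact that $C_r^*(\mathbb Z_n) \hookrightarrow (\mathcal M_n(\mathbb C),\tau_n)$ trace-preservingly as the subalgebra of circulant/diagonal matrices while $\mathcal M_n(\mathbb C)$ itself embeds trace-preservingly in $C_r^*(\mathbb Z_n \ast \mathbb Z)$ (as the subalgebra generated by $\lambda(\mathbb Z_n)$ and one further suitably chosen unitary — this is Dykema's free-probability description of $\mathcal M_n$). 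Granting such a trace-preserving embedding $(\mathcal M_n(\mathbb C),\tau_n)\hookrightarrow (C_r^*(G),\tau_G)$ with $G$ a free product of a $\mathbb Z_n$ and a free group, Remark 2.3.1 gives
$$
(\mathcal M_n(\mathbb C),\tau_n)*_{red}(\mathcal M_n(\mathbb C),\tau_n) \subseteq (C_r^*(G),\tau_G)*_{red}(C_r^*(G),\tau_G) = C_r^*(G\ast G),
$$
and $G\ast G$ is again a free product of copies of $\mathbb Z_n$ with a free group, hence $C_r^*(G\ast G) \subseteq C_r^*(\mathbb Z_n \ast F_m)$ for suitable $m$ after absorbing each $\mathbb Z_n$ factor into a crossed-product/free-product as in Corollary 3.2.1 (or directly: $C_r^*(\mathbb Z_n \ast \cdots \ast \mathbb Z_n \ast F_m)$ is MF by iterating the argument of Corollary 3.2.1, viewing it inside $C_r^*(F_{m'})\rtimes_{\alpha,r}(\mathbb Z_n \times \cdots)$). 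Since $C_r^*(F_m)$ and its $\mathbb Z_n$-decorated cousins are MF by Lemma 3.2.1 and Corollary 3.2.1, and MF passes to C$^*$-subalgebras by Remark 2.4.1, we conclude $(\mathcal M_n(\mathbb C),\tau_n)*_{red}(\mathcal M_n(\mathbb C),\tau_n)$ is MF.

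**The main obstacle** I anticipate is establishing the trace-preserving embedding of $(\mathcal M_n(\mathbb C),\tau_n)$ into a reduced free group C$^*$-algebra of the right form — i.e., verifying that the copy of $\mathcal M_n$ one writes down inside $C_r^*(\mathbb Z_n\ast\mathbb Z_n)$ (or $C_r^*(\mathbb Z_n\ast\mathbb Z)$) is genuinely the full matrix algebra with the normalized trace and that freeness of two such copies is preserved under Remark 2.3.1. Getting the bookkeeping of which free/crossed products appear on the outside right — so that Corollary 3.2.1 or an easy extension of it (free products of several $\mathbb Z_n$'s with a free group) actually applies — is the delicate part; everything else (MF passes to subalgebras, Haagerup–Thorbjørnsen, the crossed-product lemma) is already in hand from the excerpt.
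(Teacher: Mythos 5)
Your second, ``cleaner'' route rests on the existence of a \emph{unital} trace-preserving embedding of $(\mathcal M_n(\Bbb C),\tau_n)$ into $(C_r^*(G),\tau_G)$ with $G$ a free product of copies of $\Bbb Z_n$ and a free group, and this is exactly where the argument breaks down: no such embedding exists, for K-theoretic reasons. A unital copy of $\mathcal M_n(\Bbb C)$ inside a unital C$^*$-algebra $\mathcal D$ writes $1_{\mathcal D}$ as a sum of $n$ mutually equivalent projections, so $[1_{\mathcal D}]$ must be divisible by $n$ in $K_0(\mathcal D)$. But for $G=\Bbb Z_n*F_m$ (and likewise for any free product of copies of $\Bbb Z_n$ with a free group), the standard computation of the K-theory of a group acting on a tree gives $K_0(C_r^*(G))\cong \Bbb Z^n$, generated by the classes of the $n$ minimal spectral projections of $\lambda(u)$, with $[1]=(1,\dots,1)$; this class is not divisible by $n$. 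Hence the copy of $\mathcal M_n(\Bbb C)$ you propose to ``write down inside $C_r^*(\Bbb Z_n*\Bbb Z)$'' cannot exist (there are projections of trace $1/n$ there, but they are pairwise inequivalent), and Remark 2.3.1 --- which requires unital, trace-preserving inclusions --- has nothing to apply to. Your first route, via quotients, was rightly abandoned for the reason you yourself give. So the obstacle you flag at the end is not mere bookkeeping; it is fatal to the outline as written.

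The paper's proof circumvents precisely this obstruction by leaving the class of reduced group C$^*$-algebras of free products: it forms the reduced crossed product $C_r^*(\Bbb Z_n*F_n)\rtimes_{\alpha,r}\Bbb Z_n$, where the generator $v$ of the acting copy of $\Bbb Z_n$ multiplies $\lambda(u)$ by a primitive $n$-th root of unity $\gamma$ and cyclically permutes the free generators $g_1,\dots,g_n$. The resulting relations $\sigma(v)\sigma(u)=\gamma\sigma(u)\sigma(v)$ with $\sigma(u)^n=\sigma(v)^n=1$ make $C^*(\sigma(u),\sigma(v))$ a genuine unital copy of $\mathcal M_n(\Bbb C)$ carrying the normalized trace (the crossed product changes the K-theory so that the unit does become divisible by $n$), and a reduced-word computation shows that this copy and its conjugate by $\sigma(g_1)$ are free with respect to the canonical trace. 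MF-ness then follows from Corollary 3.2.1 together with Lemma 3.2.2, and passes to the subalgebra $(\mathcal M_n(\Bbb C),\tau_n)*_{red}(\mathcal M_n(\Bbb C),\tau_n)$ by Remark 2.4.1. To salvage your outline you need this crossed-product (equivalently, twisted group algebra) ingredient; free products of $\Bbb Z_n$'s and free groups alone never contain $\mathcal M_n(\Bbb C)$ unitally.
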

\begin{proof}
Assume that the group $\Bbb Z_n*F_n$ is generated by the natural
generators $u$ in $\Bbb Z$ and $g_1,\ldots, g_n$ in $F_n$. Let
$\lambda$ be the left regular representation of $\Bbb Z_n*F_n$ on
the Hilbert space $l^2( \Bbb Z_n*F_n)$ with the cyclic and
separating vector $\eta_1$. Thus $C_r^*(\Bbb Z_n*F_n)$ is generated
by $\lambda(u)$ and $\lambda(g_1),\ldots, \lambda(g_n)$ in
$B(l^2(\Bbb Z_n*F_n))$.

 Assume the
second copy of $\Bbb Z_n$ is generated by another natural generator
$v$. Let $\gamma$ in $\Bbb C$ be the $n$-th root of unit. Let
$\alpha: \Bbb Z_n\rightarrow Aut(C^*_r(\Bbb Z_n*F_n))$ be a
homomorphism from $\Bbb Z_n$ into $Aut(C^*_r(\Bbb Z_n*F_n))$ induced
by the following mapping:
$$
\begin{aligned}
     \alpha(v)(\lambda(u))&= \gamma \lambda(u) \\
     \alpha(v)(\lambda(g_i))&=\lambda(g_{i+1}) , \ \ \ for \ 1\le
     i\le n-1\\
      \alpha(v)(\lambda(g_n))&=\lambda(g_{ 1}).
\end{aligned}
$$

Let $C^*_r(\Bbb Z_n*F_n)\rtimes_{\alpha,r} \Bbb Z_n$ be the reduced
crossed product of $C^*_r(\Bbb Z_n*F_n)$ by the group $\Bbb Z_n$.
Recall the definition of reduced crossed product of C$^*$-algebras
as in Section 2.2. Let $l^2(\Bbb Z_n)$ be the Hilbert space with an
orthonormal basis $e_1, e_v, e_{v^2},\ldots, e_{v^{n-1}}$. Let
$\lambda: \Bbb Z_n\rightarrow B(l^2(\Bbb Z_n))$ be the left regular
representation of $\Bbb Z_n$ on the Hilbert space $l^2(\Bbb Z_n)$
with the cyclic and separating vector $e_1$. Let $\mathcal H=l^2(
\Bbb Z_n*F_n)\otimes l^2(\Bbb Z_n)$. Then we introduce
representation $\sigma$ of $\Bbb Z_n$ and $ \Bbb Z_n*F_n $ on
$\mathcal H$ as following
$$
\begin{aligned}
   \sigma( g )&=I_{l^2( \Bbb Z_n*F_n)}\otimes \lambda(g), \qquad
   \forall \
   \ g\in \Bbb Z_n\\
   \sigma(h) (\xi\otimes e_{v^i})&= (\alpha^{-1}(v^i)(\lambda(h))(\xi))\otimes
   e_{v^i},\qquad \forall \ h\in  \Bbb Z_n*F_n, \ \forall \ 1\le
   i\le n.
\end{aligned}
$$ Then  $C^*_r(\Bbb Z_n*F_n)\rtimes_{\alpha,r} \Bbb Z_n$ is the
C$^*$-algebra generated by $\{\sigma(g), \sigma(h) \ | \ g\in\Bbb
Z_n \ and \  h\in
  \Bbb Z_n*F_n \}$ in $B(\mathcal H)$. And we have
  $$
    \sigma(v)\sigma(u)=\gamma \sigma(u)\sigma(v).
  $$
Furthermore, there is a canonical faithful tracial state $\tau$ on
$C^*_r(\Bbb Z_n*F_n)\rtimes_{\alpha,r} \Bbb Z_n$, which is
defined by
$$
\tau(x)= \langle x (\eta_1\otimes e_1), \eta_1\otimes e_1\rangle,
\qquad \forall \ x\in C^*_r(\Bbb Z_n*F_n)\rtimes_{\alpha,r} \Bbb
Z_n.
$$

\begin{claim}
$\{\sigma( v ),\sigma( u )\}$ and $\{\sigma( g_1 )\}$ are free
with respect to $\tau$ in $C^*_r(\Bbb Z_n*F_n)\rtimes_{\alpha,r}
\Bbb Z_n$
\end{claim}

[Proof of Claim:] Note that $\{\sigma(u^i)\sigma(v^j)\ | \ 0\le
i,j\le n-1\}$ forms a basis for the C$^*$-subalgebra generated by
$\sigma(u)$ and $\sigma(v )$ in $C^*_r(\Bbb
Z_n*F_n)\rtimes_{\alpha,r} \Bbb Z_n$. And
$\{\sigma(g_1^t)\}_{t=-\infty}^\infty$ forms a basis for the
C$^*$-subalgebra generated by $\sigma(g_1)$ in $C^*_r(\Bbb
Z_n*F_n)\rtimes_{\alpha,r} \Bbb Z_n$. Therefore to prove the claim
it suffices to show the following: {\em For any positive integer
$r$, nonzero integers $n_1,  n_2, ,\ldots, n_r $, and integers
$m_1,k_1,\ldots, m_r,k_r$ with  $0\le m_i,k_i<n$ and  $(m_i,k_i)\ne
(0,0)$ for $1\le i\le r$, we have
$$
\tau(\sigma(g_1^{n_1})\sigma(u^{m_1})\sigma(v^{k_1})\cdots
\sigma(g_1^{n_r})\sigma(u^{m_r})\sigma(v^{k_r}))=0.
$$}
Note that
$$\begin{aligned}
&\tau(\sigma(g_1^{n_1})\sigma(u^{m_1})\sigma(v^{k_1})\cdots
\sigma(g_1^{n_r})\sigma(u^{m_r})\sigma(v^{k_r})) = \tau(\left
(\sigma(v^{k_r})\sigma(g_1^{n_1}) \sigma(v^{k_r})^*\right)  \left
(\sigma(v^{k_r})\sigma(u^{m_1}) \sigma(v^{k_r})^*\right) \\
&\qquad\quad \left (\sigma(v^{k_r+k_1})\sigma(g_1^{n_2})
\sigma(v^{k_r+k_1})^*\right) \cdots \left
(\sigma(v^{k_r+k_1+\cdots k_{r-1}})\sigma(u^{m_r})
\sigma(v^{k_r+k_1+\cdots
k_{r-1}})^*\right)\sigma(v^{k_r+k_1+\cdots k_{r-1} }) )
\end{aligned}
$$
 Thus it will be enough if we are able to show the
following is true:  {\em For any positive integer $r$, nonzero
integers $n_1, n_2, ,\ldots, n_r $, and integers $m_1,p_1,\ldots,
m_r,p_r$ with  $0\le m_i,p_i<n$ and   $(m_i,p_{i+1}-p_i)\ne
(0,0)$ for $1\le i\le r$, we have
$$
\tau(\left ( \sigma(v^{p_1})\sigma(g_1^{n_1})\sigma(v^{p_1})^*
\right ) \sigma(u^{m_1})\left (
\sigma(v^{p_2})\sigma(g_1^{n_2})\sigma(v^{p_2})^*\right )\cdots
\left ( \sigma(v^{p_{r }})\sigma(g_1^{n_r})\sigma(v^{p_{r
}^*})\right )\sigma(u^{m_r}))=0,
$$}
The last equality is equivalent to:
$$
\tau(  \sigma(g_{1+p_1}^{n_1})   \sigma(u^{m_1})
 \sigma(g_{1+p_2}^{n_2}) \cdots
 \sigma(g_{1+p_{r }}^{n_r}) \sigma(u^{m_r}))=0.
$$On the other hand, by the freeness of $u$ and $g_1,\ldots, g_n$
in $\Bbb Z_n*F_n$, we know, if $(m_i,p_{i+1}-p_i)\ne (0,0)$ for
$1\le i\le r$ then
$$
 g_{1+p_1}^{n_1} u^{m_1} g_{1+p_2}^{n_2}\cdots g_{1+p_{r }}^{n_r}
 u^{m_r} \quad \text{ is a reduced word in $\Bbb Z_n*F_n$}.
$$Therefore
$$
\tau(  \sigma(g_{1+p_1}^{n_1})   \sigma(u^{m_1})
 \sigma(g_{1+p_2}^{n_2}) \cdots
 \sigma(g_{1+p_{r }}^{n_r}) \sigma(u^{m_r}))= 0.
$$
This implies that $\{\sigma( v ),\sigma( u )\}$ and $\{\sigma( g_1
)\}$ are free with respect to $\tau$ in $C^*_r(\Bbb
Z_n*F_n)\rtimes_{\alpha,r} \Bbb Z_n$. This ends the proof of the
claim.

[Continue the proof of the lemma:] Since $u$ and $v$ are two natural
generators of the group $\Bbb Z_n$ and $ \sigma(u) \sigma(v) =\gamma
\sigma(v)\sigma(u) $  where  $\gamma$ is the $n$-th root of the
unit, the C$^*$-subalgebra $\mathcal B$  generated by $\sigma(u)$
and $\sigma(v)$ in $C^*_r(\Bbb Z_n*F_n)\rtimes_{\alpha,r} \Bbb Z_n$
is $*$-isomorphic to $\mathcal M_n(\Bbb C)$.    By Claim, we know
that $\mathcal B$ and $\sigma(g_1) \mathcal B\sigma(g_1)^*$ are free
with respect to $\tau$ in $C^*_r(\Bbb Z_n*F_n)\rtimes_{\alpha,r}
\Bbb Z_n$. Since $\tau$ is a faithful tracial state on $C^*_r(\Bbb
Z_n*F_n)\rtimes_{\alpha,r} \Bbb Z_n$, $\tau$ is also a faithful
tracial state on the C$^*$-subalgebra generated by $\mathcal B$ and
$\sigma(g_1)\mathcal B \sigma(g_1)^*$. Combining with the fact that
$\mathcal B$   is $*$-isomorphic to $\mathcal M_n(\Bbb C)$, we know
that
$$
(\mathcal M_n(\Bbb C),\tau_n)*_{red} (\mathcal M_n(\Bbb C),\tau_n)
\simeq(\mathcal B,\tau|_{\mathcal B})*_{red} (\mathcal
B,\tau|_{\mathcal B}) \simeq C^*(\mathcal B,\sigma(g_1)\mathcal B
\sigma(g_1)^*) \subseteq C^*_r(\Bbb Z_n*F_n)\rtimes_{\alpha,r} \Bbb
Z_n.
$$
By Lemma 3.2.2 and Corollary 3.2.1, we know that $$ (\mathcal
M_n(\Bbb C),\tau_n)*_{red} (\mathcal M_n(\Bbb C),\tau_n)$$ is an MF
algebra.
\end{proof}

\begin{definition}
  Suppose that $$\mathcal B\simeq \mathcal M_{n_1}(\Bbb C) \oplus
  \mathcal M_{n_2}(\Bbb C)\oplus \cdots \oplus \mathcal M_{n_r}(\Bbb
  C)$$ is a finite dimensional C$^*$-algebra. Let  $\tau_{n_i}$ be the normalized tracial state on
  $\mathcal M_{n_i}(\Bbb C)$ for each $1\le i\le r$. Moreover every element $x$
  in $\mathcal B$ can be written as
  $$
   x=x_1\oplus x_2\oplus \cdots \oplus x_r, \qquad with \ each \
   x_i\in \mathcal M_{n_i}(\Bbb C), \ \forall \ 1\le i\le r.
  $$
  Then   a  tracial state   $\tau$ on $\mathcal B$ is called a rational tracial state if there are
  rational numbers $0\le \alpha_1,\ldots, \alpha_r\le 1$ such that
  $$
    \tau(x) = \alpha_1\tau_{n_1}(x_1)+\cdots+
    \alpha_r\tau_{n_r}(x_r), \qquad \forall \ x\in \mathcal B.
  $$
\end{definition}

\begin{proposition}
Suppose that $\mathcal B_1$  and $\mathcal B_2$  are finite
dimensional C$^*$-algebras with   faithful rational tracial states
$\tau_1$, and $\tau_2$ respectively. Then
$$
(\mathcal B_1,\tau_1)*_{red} (\mathcal B_2,\tau_2)
$$ is an MF algebra.
\end{proposition}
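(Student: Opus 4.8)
The plan is to reduce the statement to the single case already settled in Lemma 3.2.3, namely that $(\mathcal{M}_n(\Bbb C),\tau_n)*_{red}(\mathcal{M}_n(\Bbb C),\tau_n)$ is an MF algebra, by pushing everything into a common matrix algebra through trace-preserving embeddings. The rationality hypothesis is exactly what makes such embeddings possible.

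\textbf{Step 1: trace-preserving embedding into a matrix algebra.} First I would show that a finite-dimensional C$^*$-algebra with a faithful rational tracial state admits a unital, trace-preserving $*$-embedding into some $(\mathcal{M}_N(\Bbb C),\tau_N)$. Write $\mathcal{B}\cong\bigoplus_{j=1}^{r}\mathcal{M}_{n_j}(\Bbb C)$ and $\tau=\sum_{j}\alpha_j\tau_{n_j}$ with $\alpha_j\in\Bbb Q$, $\alpha_j>0$ (faithfulness), and $\sum_j\alpha_j=1$. Since the $\alpha_j$ are rational, one may choose a positive integer $N$ so that $m_j:=N\alpha_j/n_j$ is a positive integer for every $j$; then $\sum_j m_j n_j=N$. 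The map sending each block $\mathcal{M}_{n_j}(\Bbb C)$ into $\mathcal{M}_N(\Bbb C)$ with multiplicity $m_j$ (i.e. $x_j\mapsto x_j\otimes 1_{m_j}$, placed in the $j$-th diagonal block) is a unital $*$-embedding $\iota:\mathcal{B}\hookrightarrow\mathcal{M}_N(\Bbb C)$, and a direct computation on matrix units gives $\tau_N\circ\iota=\tau$.

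\textbf{Step 2: a common matrix algebra for $\mathcal{B}_1$ and $\mathcal{B}_2$.} Apply Step 1 to obtain integers $N_1,N_2$ and unital trace-preserving embeddings $\iota_i:\mathcal{B}_i\hookrightarrow(\mathcal{M}_{N_i}(\Bbb C),\tau_{N_i})$. Let $N$ be a common multiple of $N_1$ and $N_2$. The amplification $\mathcal{M}_{N_i}(\Bbb C)\ni x\mapsto x\otimes 1_{N/N_i}\in\mathcal{M}_N(\Bbb C)$ is again unital and trace-preserving, so composing yields unital trace-preserving embeddings $\mathcal{B}_i\hookrightarrow(\mathcal{M}_N(\Bbb C),\tau_N)$ for $i=1,2$.

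\textbf{Step 3: conclude.} Now invoke Remark 2.3.1, with $\mathcal{A}_1=\mathcal{A}_2=\mathcal{M}_N(\Bbb C)$ carrying the state $\tau_N$, to get
$$
(\mathcal{B}_1,\tau_1)*_{red}(\mathcal{B}_2,\tau_2)\subseteq(\mathcal{M}_N(\Bbb C),\tau_N)*_{red}(\mathcal{M}_N(\Bbb C),\tau_N).
$$
If $N\ge 2$, Lemma 3.2.3 shows the right-hand side is an MF algebra; if $N=1$ then $\mathcal{B}_1=\mathcal{B}_2=\Bbb C$ and the free product is $\Bbb C$, which is trivially MF. In either case, since a C$^*$-subalgebra of an MF algebra is an MF algebra (Remark 2.4.1), it follows that $(\mathcal{B}_1,\tau_1)*_{red}(\mathcal{B}_2,\tau_2)$ is an MF algebra.

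\textbf{Expected main difficulty.} There is no genuine obstacle here: the entire weight of the argument rests on Lemma 3.2.3, and the only point requiring care is the choice of the common denominator $N$ and the verification that the resulting block embedding is trace-preserving. It is precisely the rationality of $\tau_1$ and $\tau_2$ that allows $N$ to be chosen at all; for irrational traces this route collapses, which is presumably why the passage to general AH algebras later proceeds through the approximation Lemma 3.1.3 rather than a direct embedding.
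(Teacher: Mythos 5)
Your proof is correct and follows essentially the same route as the paper: both arguments use the rationality of the traces to construct unital trace-preserving embeddings of $\mathcal B_1$ and $\mathcal B_2$ into a common $(\mathcal M_N(\Bbb C),\tau_N)$, then invoke the embedding of reduced free products together with Lemma 3.2.3 and the fact that C$^*$-subalgebras of MF algebras are MF. Your write-up is in fact slightly more detailed than the paper's (explicit multiplicities, the common multiple of $N_1,N_2$, and the trivial $N=1$ case), and you correctly cite Lemma 3.2.3 where the paper's text has an apparent typo pointing to Lemma 3.2.2.
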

\begin{proof}
Since both $\tau_1$ and $\tau_2$ are faithful rational tracial
states on $\mathcal B_1$, and $\mathcal B_2$ respectively, there are
a positive integer $n$ and    trace-preserving, faithful, unital
$*$-monomorphisms $\pi_1: \mathcal B_1\rightarrow \mathcal M_n(\Bbb
C),$ and $ \pi_2: \mathcal B_2\rightarrow \mathcal M_n(\Bbb C), $
such that
$$
\tau_n(\pi_1(x_1))=\tau_1(x_1) \qquad and \qquad
\tau_n(\pi_2(x_2))=\tau_2(x_2), \qquad \forall \ x_1\in \mathcal
B_1, x_2\in\mathcal B_2,
$$ where $\tau_n$ is the tracial state on $\mathcal M_n(\Bbb C)$.
Therefore,
$$
(\mathcal B_1,\tau_1)*_{red} (\mathcal B_2,\tau_2)\subseteq
(\mathcal M_n(\Bbb C),\tau_n)*_{red} (\mathcal M_n(\Bbb C),\tau_n).
$$ By Lemma 3.2.2, we know that $
(\mathcal B_1,\tau_1)*_{red} (\mathcal B_2,\tau_2)$ is an MF
algebra.
\end{proof}

\subsection{Reduced free products of unital AH algebras} Recall the
definition of unital AF algebra as follows.
\begin{definition}
A unital separable C$^*$-algebra $\mathcal A$ is called
approximately finite dimensional (AF) algebra if for every
$x_1,\ldots, x_n$ in $\mathcal A$ and every $\epsilon>0$, there is a
 finite dimensional C$^*$-subalgebra $I_{\mathcal A}\in \mathcal B\subseteq
\mathcal A$ satisfying
$$
\max_{1\le i\le n} dist(x_i,\mathcal B)\le \epsilon.
$$
\end{definition}

The following lemma is quite useful.
\begin{lemma}
 Suppose that $\mathcal A$ is a separable C$^*$-algebra. Assume for every
 $x_1,\ldots, x_n$ in $\mathcal A$ and every $\epsilon>0$, there is
 an MF algebra $\mathcal A_1$ such that
 $$
 \{x_1,\ldots, x_n\}\subseteq_{\epsilon} \mathcal A_1 \qquad  \text {(in the sense of Definition 3.3.1)}.
 $$ Then $\mathcal A$ is also an MF algebra.
\end{lemma}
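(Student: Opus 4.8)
The plan is to verify for $\mathcal A$ the concrete criterion for being an MF algebra furnished by Lemma 2.4.1(ii); here and below we take $\mathcal A$ to be unital, which is implicit in Definition 3.3.1. So fix a finite family of self-adjoint elements $x_1,\ldots,x_n\in\mathcal A$, an $\epsilon>0$, and noncommutative polynomials $P_1,\ldots,P_r\in\Bbb C\langle X_1,\ldots,X_n\rangle$; the goal is to exhibit a positive integer $k$ and self-adjoint matrices $A_1,\ldots,A_n\in\mathcal M_k(\Bbb C)$ with $\max_{1\le j\le r}\big|\,\|P_j(x_1,\ldots,x_n)\|-\|P_j(A_1,\ldots,A_n)\|\,\big|\le\epsilon$.

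First I would set $R=1+\max_i\|x_i\|$ and use the elementary Lipschitz-type bound for noncommutative polynomial evaluation on bounded sets to pick $\delta\in(0,1]$ so that $\|P_j(a_1,\ldots,a_n)-P_j(b_1,\ldots,b_n)\|\le\epsilon/2$ for all $j$ whenever $a_i,b_i$ are elements of a C$^*$-algebra with $\|a_i\|,\|b_i\|\le R$ and $\|a_i-b_i\|\le\delta$. Applying the hypothesis to $x_1,\ldots,x_n$ with this $\delta$ yields an MF algebra $\mathcal A_1$, elements $y_1,\ldots,y_n\in\mathcal A_1$, a Hilbert space $\mathcal H$, and faithful unital $*$-representations $\rho_1\colon\mathcal A\to B(\mathcal H)$ and $\rho_2\colon\mathcal A_1\to B(\mathcal H)$ with $\max_i\|\rho_1(x_i)-\rho_2(y_i)\|\le\delta$. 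Since $\rho_1(x_i)$ is self-adjoint, replacing each $y_i$ by $\tfrac12(y_i+y_i^*)$ preserves this inequality, so we may assume the $y_i$ are self-adjoint; moreover $\|y_i\|=\|\rho_2(y_i)\|\le\|\rho_1(x_i)\|+\delta\le R$ because $\rho_2$ is isometric.

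Because $\rho_1,\rho_2$ are isometric $*$-homomorphisms, $\|P_j(x_1,\ldots,x_n)\|=\|P_j(\rho_1(x_1),\ldots,\rho_1(x_n))\|$ and $\|P_j(y_1,\ldots,y_n)\|=\|P_j(\rho_2(y_1),\ldots,\rho_2(y_n))\|$ for every $j$, and the right-hand sides differ by at most $\epsilon/2$ by the choice of $\delta$; hence $\big|\,\|P_j(x_1,\ldots,x_n)\|-\|P_j(y_1,\ldots,y_n)\|\,\big|\le\epsilon/2$ for all $j$. Now apply Lemma 2.4.1 in the direction (i)$\Rightarrow$(ii) to the unital separable MF algebra $\mathcal A_1$ — if $\mathcal A_1$ is not unital, first pass to the unital C$^*$-subalgebra generated by its unit and the $y_i$ inside the (MF) unitization of $\mathcal A_1$, which is again MF by Remark 2.4.1 — with the self-adjoint elements $y_1,\ldots,y_n$, tolerance $\epsilon/2$, and polynomials $P_1,\ldots,P_r$; this produces $k$ and self-adjoint matrices $A_1,\ldots,A_n\in\mathcal M_k(\Bbb C)$ with $\max_j\big|\,\|P_j(y_1,\ldots,y_n)\|-\|P_j(A_1,\ldots,A_n)\|\,\big|\le\epsilon/2$. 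The triangle inequality now gives $\max_j\big|\,\|P_j(x_1,\ldots,x_n)\|-\|P_j(A_1,\ldots,A_n)\|\,\big|\le\epsilon$, which is precisely Lemma 2.4.1(ii) for $\mathcal A$; therefore $\mathcal A$ is an MF algebra.

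The proof is a routine three-$\epsilon$ argument, so there is no genuine obstacle; the only points that need a little care are the Lipschitz estimate for noncommutative polynomials on bounded sets — this is exactly where the common Hilbert space and the faithfulness of $\rho_1,\rho_2$ do their work, by letting one compare $\|P_j(\,\cdot\,)\|$ evaluated in $\mathcal A$ with $\|P_j(\,\cdot\,)\|$ evaluated in $\mathcal A_1$ — and the small amount of bookkeeping to reduce to a unital MF algebra $\mathcal A_1$ before invoking Lemma 2.4.1.
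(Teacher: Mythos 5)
Your proof is correct and follows essentially the same route as the paper's: verify criterion (ii) of Lemma 2.4.1 for $\mathcal A$ by passing through the $y_i$ in the MF algebra $\mathcal A_1$ and using the triangle inequality. The paper simply compresses your first step — it asserts directly that the hypothesis yields self-adjoint $y_1,\ldots,y_n$ with $\bigl|\,\|P_j(x_1,\ldots,x_n)\|-\|P_j(y_1,\ldots,y_n)\|\,\bigr|\le\epsilon/2$, whereas you make explicit the Lipschitz estimate, the self-adjointization of the $y_i$, and the role of the common faithful representations, all of which is a faithful unpacking of what the paper leaves implicit.
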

\begin{proof}
Assume that $x_1,\ldots, x_n$ is a family of self-adjoint elements
in $\mathcal A$ and $\epsilon >0$. Suppose that $P_1,\ldots, P_r$ is
a family of noncommutative polynomials in $\Bbb C\langle X_1,\ldots,
X_n\rangle$. Therefore, by condition on $\mathcal A$, we know that
there is an MF algebra $\mathcal A_1$ and a family of self-adjoint
elements $y_1,\ldots, y_n$ in $\mathcal A_1$ such that, for all
$1\le j\le r$
$$
\left | \|P_j(x_1,\ldots, x_n)\|-\|P_j(y_1, \ldots, y_n)\| \right |
\le \epsilon/2.
$$ Since $\mathcal A_1$ is an MF algebra, there are a positive
integer $k$ and self-adjoint matrices
$$
A_1,\ldots, A_n\in \mathcal M_k^{s.a.}(\Bbb C)
$$ such that, for all $1\le j\le r$,
$$
\left | \|P_j(A_1,\ldots, A_n)\|-\|P_j(y_1, \ldots, y_n)\| \right |
\le \epsilon/2.
$$ It follows that,  for all $1\le j\le r$,
$$
\left | \|P_j(A_1,\ldots, A_n)\|-\|P_j(x_1, \ldots, x_n)\| \right |
\le \epsilon.
$$
By Lemma 2.4.1, we know that $\mathcal A$ is an MF algebra.
\end{proof}

\begin{lemma}
Suppose that $\mathcal B_1$  and $\mathcal B_2$   are finite
dimensional C$^*$-algebras with   faithful tracial states $\tau $,
and $\psi$ respectively. Then
$$
(\mathcal B_1,\tau )*_{red} (\mathcal B_2,\psi)
$$ is an MF algebra.
\end{lemma}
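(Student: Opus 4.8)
The plan is to deduce the general case from the rational case already handled in Proposition 3.2.1, by approximating the two faithful tracial states by rational ones and pushing the conclusion through Lemma 3.3.1; the device that transfers the approximation from the traces to the reduced free products is Lemma 3.1.3, applied once for each of the two finite dimensional factors. To begin, note that on a finite dimensional C$^*$-algebra $\bigoplus_{i=1}^{r}\mathcal M_{n_i}(\Bbb C)$ a faithful tracial state corresponds to a probability vector $(\alpha_1,\dots,\alpha_r)$ with all $\alpha_i>0$, and the rational such vectors are dense in the open simplex; hence we may fix faithful rational tracial states $\tau_\delta\to\tau$ on $\mathcal B_1$ and $\psi_\gamma\to\psi$ on $\mathcal B_2$, in the sense that $\tau_\delta(b)\to\tau(b)$ and $\psi_\gamma(c)\to\psi(c)$ for all $b\in\mathcal B_1$, $c\in\mathcal B_2$.

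Now fix a finite family $x_1,\dots,x_n$ in $(\mathcal B_1,\tau)*_{red}(\mathcal B_2,\psi)$ and $\epsilon>0$. Treating $\mathcal B_1$ (with its fixed faithful trace $\tau$) as the distinguished factor and $\mathcal B_2$ as the finite dimensional factor, Lemma 3.1.3 produces $\gamma$ with $\{x_1,\dots,x_n\}\subseteq_{\epsilon/2}(\mathcal B_1,\tau)*_{red}(\mathcal B_2,\psi_\gamma)$, say witnessed by $y_1,\dots,y_n$ in the latter algebra. Because the reduced free product construction is symmetric, $(\mathcal B_1,\tau)*_{red}(\mathcal B_2,\psi_\gamma)=(\mathcal B_2,\psi_\gamma)*_{red}(\mathcal B_1,\tau)$, so a second application of Lemma 3.1.3 --- now with $\mathcal B_2$ (trace $\psi_\gamma$) as the distinguished factor and $\mathcal B_1$ as the finite dimensional factor, applied to $y_1,\dots,y_n$ --- yields $\delta$ with $\{y_1,\dots,y_n\}\subseteq_{\epsilon/2}(\mathcal B_1,\tau_\delta)*_{red}(\mathcal B_2,\psi_\gamma)$.

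These two relations chain into $\{x_1,\dots,x_n\}\subseteq_{\epsilon}(\mathcal B_1,\tau_\delta)*_{red}(\mathcal B_2,\psi_\gamma)$, and it is here that finite dimensionality of both factors is essential: by the construction in the proof of Lemma 3.1.3 (which rests on Lemmas 3.1.1 and 3.1.2), every reduced free product $(\mathcal B_1,\tau')*_{red}(\mathcal B_2,\psi')$ is realized concretely on the \emph{same} Hilbert space $\mathcal H=\Bbb C^{d_1}*\Bbb C^{d_2}$ (with $d_i=\dim_{\Bbb C}\mathcal B_i$) --- as the traces vary, only the representations of $\mathcal B_1,\mathcal B_2$ on the finite dimensional GNS spaces $\Bbb C^{d_1},\Bbb C^{d_2}$ move (continuously), while the distinguished unit vectors, hence $\mathcal H$ and the intertwining unitaries $V_1,V_2$, do not. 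Consequently all the witnessing operators may be taken on this single $\mathcal H$, the operator representing $y_i$ is literally the same in both applications, and the triangle inequality gives the desired $\subseteq_\epsilon$ relation. Since $(\mathcal B_1,\tau_\delta)*_{red}(\mathcal B_2,\psi_\gamma)$ is an MF algebra by Proposition 3.2.1, and $x_1,\dots,x_n$, $\epsilon$ were arbitrary, Lemma 3.3.1 shows that $(\mathcal B_1,\tau)*_{red}(\mathcal B_2,\psi)$ is an MF algebra.

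The main obstacle is not any deep estimate --- the polynomial-continuity work is already packaged inside Lemma 3.1.3 --- but the bookkeeping of the previous paragraph: checking that the two invocations of Lemma 3.1.3 can be arranged on the common Hilbert space $\mathcal H$ so that the two $\subseteq_{\epsilon/2}$ relations genuinely compose. One can avoid the chaining altogether by instead repeating the proof of Lemma 3.1.3 a single time while letting both traces vary simultaneously, which delivers $\{x_1,\dots,x_n\}\subseteq_{\epsilon}(\mathcal B_1,\tau_\delta)*_{red}(\mathcal B_2,\psi_\gamma)$ directly.
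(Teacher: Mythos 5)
Your proof is correct and follows the same skeleton as the paper's: approximate $\tau$ and $\psi$ by faithful rational tracial states, apply Lemma 3.1.3 once in each variable, chain the two $\subseteq_{\epsilon/2}$ relations, and finish with Proposition 3.2.1 and Lemma 3.3.1. The one place where you genuinely diverge is the chaining step, which is also the only nontrivial point of the argument. The paper glues the two approximations by taking the middle representations $\rho_2,\rho_3$ of $(\mathcal B_1,\tau_\alpha)*_{red}(\mathcal B_2,\psi)$ to be unital, faithful and essential and then invoking Voiculescu's noncommutative Weyl--von Neumann theorem \cite{Voi1} to produce unitaries $u_k$ with $\|\rho_2(y)-u_k^*\rho_3(y)u_k\|\to 0$, at the cost of a $3\epsilon$ rather than $\epsilon$ estimate. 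You instead observe that when both factors are finite dimensional, every $(\mathcal B_1,\tau')*_{red}(\mathcal B_2,\psi')$ is canonically realized on the single Hilbert space $\Bbb C^{d_1}*\Bbb C^{d_2}$ (after the Gram--Schmidt identification of Lemma 3.1.1, only the representing matrices move with the traces, while the distinguished vectors and the intertwiners $V_1,V_2$ are fixed), so the two middle representations literally coincide and the triangle inequality suffices; this is a valid and somewhat cleaner route that removes the dependence on Voiculescu's theorem, though it exploits finite dimensionality of both factors, whereas the paper's gluing device works verbatim in the more general situation of Lemma 3.1.3 where one factor is an arbitrary separable unital C$^*$-algebra. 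Your closing remark --- that one can let both traces vary simultaneously in a single pass of the Lemma 3.1.3 argument and avoid chaining altogether --- is also correct and is arguably the most economical version of the proof.
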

\begin{proof} Suppose that $x_1\ldots, x_n$ is a family of elements
in $ (\mathcal B_1,\tau )*_{red} (\mathcal B_2,\psi) $ and
$\epsilon>0$ is a positive number.

    Apparently, there is a sequence of faithful rational tracial
    states $\tau_{\alpha}$, $\alpha=1,2\ldots$, on $\mathcal B_1$ such that
    $$
      \lim_{\alpha\rightarrow \infty}\tau_{\alpha} (b ) =\tau(b ),
      \qquad \forall \ b \in\mathcal B_1.
    $$ Thus by Lemma 3.1.3, there is a positive integer $\alpha$
such that
$$
\{x_1,\ldots, x_n\}\subseteq_{\epsilon} (\mathcal B_1,\tau_\alpha
)*_{red} (\mathcal B_2,\psi),\ \ \text {(in the sense of Definition
3.3.1)}
$$ whence there are   $y_1,\ldots, y_n$ in $(\mathcal B_1,\tau_\alpha
)*_{red} (\mathcal B_2,\psi)$ and unital faithful
$*$-representations $\rho_1$ of $(\mathcal B_1,\tau  )*_{red}
(\mathcal B_2,\psi)$ and $\rho_2$ of $(\mathcal B_1,\tau_\alpha
)*_{red} (\mathcal B_2,\psi)$ on a Hilbert space $\mathcal H$ such
that
$$
\max_{1\le i\le n} \|\rho_1(x_i)-\rho_2(y_i)\|\le \epsilon.
$$

Applying Lemma 3.1.3 again, we know that there is a faithful
rational tracial state $\psi_\beta$ on $\mathcal B_2$ such that
$$
\{y_1,\ldots, y_n\}\subseteq_{\epsilon} (\mathcal B_1,\tau_\alpha
)*_{red} (\mathcal B_2,\psi_\beta).
$$
I.e. there are   $z_1,\ldots, z_n$ in $(\mathcal B_1,\tau_\alpha
)*_{red} (\mathcal B_2,\psi_\beta)$ and unital faithful
$*$-representations $\rho_3$ of $(\mathcal B_1,\tau_\alpha  )*_{red}
(\mathcal B_2,\psi)$ and $\rho_4$ of $(\mathcal B_1,\tau_\alpha
)*_{red} (\mathcal B_2,\psi_\beta)$ on a Hilbert space $\mathcal K$
such that
$$
\max_{1\le i\le n} \|\rho_3(y_i)-\rho_4(z_i)\|\le \epsilon.
$$
Without loss of generality, we can assume that both $\rho_2$ and
$\rho_3$ are unital, faithful, essential representations, i.e. there
is no nonzero compact operator in the ranges of $\rho_2$ and
$\rho_3$. By a result in \cite{Voi1}, there is a sequence of
unitaries $u_k : \mathcal H\rightarrow \mathcal K,$ for
$k=1,2,\ldots$, such that
$$
\limsup_{k\rightarrow \infty}\|\rho_2(y)-u_k^*\rho_3(y)
u_k\|=0,\qquad \forall \ y\in(\mathcal B_1,\tau_\alpha  )*_{red}
(\mathcal B_2,\psi).
$$
It follows that, $\forall \ 1\le i\le n$,
$$\begin{aligned}
\limsup_{k\rightarrow \infty}&\|\rho_1(x_i)- u_k^*\rho_4(z_i)
u_k\|\\&\le \limsup_{k\rightarrow \infty}\big(
\|\rho_1(x_i)-\rho_2(y_i)\|+\|\rho_2(y_i)-u_k^*\rho_3(y_i)
u_k\|+\|u_k^*\rho_3(y_i) u_k- u_k^*\rho_4(z_i) u_k\|\big)\\&\le
2\epsilon.\end{aligned}
$$

 Altogether, we have that
$$
\{x_1,\ldots, x_n\}\subseteq_{3\epsilon} (\mathcal B_1,\tau_\alpha
)*_{red} (\mathcal B_2,\psi_\beta).
$$By Proposition 3.2.1, we know that $(\mathcal B_1,\tau_\alpha
)*_{red} (\mathcal B_2,\psi_\beta)$ is an MF algebra. Thus by Lemma
3.3.1, we know that $ (\mathcal B_1,\tau )*_{red} (\mathcal
B_2,\psi) $ is an MF algebra.
\end{proof}

\begin{theorem}
Suppose that $\mathcal A_1$  and $\mathcal A_2$  are unital
separable AF subalgebras with faithful tracial states $\tau_1$, and
$\tau_2$ respectively. Then
$$
(\mathcal A_1,\tau_1)*_{red}(\mathcal A_2,\tau_2)
$$ is an MF algebra.
\end{theorem}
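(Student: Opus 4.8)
The plan is to reduce to the finite-dimensional case already settled in Lemma 3.3.2, using Lemma 3.3.1 as the gluing device. Fix a finite family $x_1,\ldots,x_n$ in $(\mathcal A_1,\tau_1)*_{red}(\mathcal A_2,\tau_2)$ and $\epsilon>0$, and let $\lambda_1,\lambda_2$ be the representations used to build the reduced free product. First I would note that each $x_i$ can be approximated within $\epsilon/2$ by a noncommutative polynomial $P_i$ evaluated at finitely many elements $\lambda_1(a_1),\ldots,\lambda_1(a_p)$ coming from $\mathcal A_1$ together with $\lambda_2(c_1),\ldots,\lambda_2(c_q)$ coming from $\mathcal A_2$.

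Next, since $\mathcal A_1$ is AF, choose a finite-dimensional unital C$^*$-subalgebra $1_{\mathcal A_1}\in\mathcal B_1\subseteq\mathcal A_1$ and elements $a_j'\in\mathcal B_1$ with $\|a_j-a_j'\|$ so small that $P_i$ evaluated at the $\lambda_1(a_j')$ (keeping the $\mathcal A_2$-slots unchanged) differs from $P_i$ evaluated at the $\lambda_1(a_j)$ by less than $\epsilon/4$; here one uses that $\lambda_1$ is contractive and that polynomial evaluation is jointly continuous in operator norm. Do the analogous thing for $\mathcal A_2$, obtaining $1_{\mathcal A_2}\in\mathcal B_2\subseteq\mathcal A_2$ finite-dimensional and $c_k'\in\mathcal B_2$ with the corresponding bound $\epsilon/4$. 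Then $y_i:=P_i(\lambda_1(a_1'),\ldots,\lambda_2(c_1'),\ldots)$ satisfies $\|x_i-y_i\|\le\epsilon$, and every $y_i$ lies in the copy of $(\mathcal B_1,\tau_1|_{\mathcal B_1})*_{red}(\mathcal B_2,\tau_2|_{\mathcal B_2})$ sitting inside $(\mathcal A_1,\tau_1)*_{red}(\mathcal A_2,\tau_2)$ by Remark 2.3.1, noting that $\tau_j|_{\mathcal B_j}$ is again a faithful tracial state.

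Now I would invoke Lemma 3.3.2 to conclude that $(\mathcal B_1,\tau_1|_{\mathcal B_1})*_{red}(\mathcal B_2,\tau_2|_{\mathcal B_2})$ is an MF algebra. Taking $\mathcal H$ to be the free-product Hilbert space on which $(\mathcal A_1,\tau_1)*_{red}(\mathcal A_2,\tau_2)$ acts, $\rho_1$ the identity representation, and $\rho_2$ the inclusion of the finite-dimensional free product into $B(\mathcal H)$, the estimate $\|x_i-y_i\|\le\epsilon$ gives $\{x_1,\ldots,x_n\}\subseteq_\epsilon(\mathcal B_1,\tau_1|_{\mathcal B_1})*_{red}(\mathcal B_2,\tau_2|_{\mathcal B_2})$ in the sense of Definition 3.3.1. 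Since $\epsilon$ and the $x_i$ were arbitrary, Lemma 3.3.1 yields that $(\mathcal A_1,\tau_1)*_{red}(\mathcal A_2,\tau_2)$ is an MF algebra.

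There is no serious obstacle here: all the analytic content sits in Lemma 3.3.2 (reduced free products of finite-dimensional C$^*$-algebras with arbitrary faithful traces are MF), which in turn rests on the rational-trace case of Proposition 3.2.1 and ultimately on Haagerup and Thorbj\o rnsen's theorem. The only points requiring care are routine: the $\mathcal A_j$-elements appearing in the polynomials can be pushed into finite-dimensional subalgebras without moving the polynomial values much, and restrictions of faithful traces remain faithful so that Remark 2.3.1 applies.
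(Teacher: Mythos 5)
Your proposal is correct and follows essentially the same route as the paper: push the generators into finite-dimensional unital subalgebras $\mathcal B_1,\mathcal B_2$ using AF-ness, identify the C$^*$-algebra they generate with $(\mathcal B_1,\tau_1|_{\mathcal B_1})*_{red}(\mathcal B_2,\tau_2|_{\mathcal B_2})$ via faithfulness of the free-product trace (Remark 2.3.1), and then combine Lemma 3.3.2 with the gluing Lemma 3.3.1. The only difference is that you spell out explicitly the polynomial-approximation step that the paper compresses into a single appeal to the definition of an AF algebra.
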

\begin{proof}
Suppose that $x_1\ldots, x_n$ is a family of elements in $ (\mathcal
A_1,\tau_1 )*_{red} (\mathcal A_2,\tau_2) $ and $\epsilon>0$ is a
positive number.

By the definition of AF algebra, we know that there are  finite
dimensional C$^*$-algebras $I_{\mathcal A_i}\in \mathcal
B_i\subseteq \mathcal A_i$ for $i=1,2$ such that
$$
\{x_1,\ldots, x_n\}\subseteq_{\epsilon} \text{the C$^*$-subalgebra
generated by $\mathcal B_1$ and $\mathcal B_2$ in $(\mathcal
A_1,\tau_1 )*_{red} (\mathcal A_2,\tau_2)$ }.
$$
Since $\tau_1*\tau_2$ is a faithful tracial state of $(\mathcal
A_1,\tau_1 )*_{red} (\mathcal A_2,\tau_2)$ and $I_{\mathcal A_1}\in
\mathcal B_1$, $I_{\mathcal A_2}\in\mathcal B_2$, we know that the
C$^*$-subalgebra generated by $\mathcal B_1$ and $\mathcal B_2$ in
$(\mathcal A_1,\tau_1 )*_{red} (\mathcal A_2,\tau_2)$  is
$*$-isomorphic to
$$
 (\mathcal B_1,\tau_1|_{\mathcal B_1} )*_{red} (\mathcal B_2,\tau_2|_{\mathcal
 B_2}).
$$ Therefore,
$$
\{x_1,\ldots, x_n\}\subseteq_{\epsilon}(\mathcal
B_1,\tau_1|_{\mathcal B_1} )*_{red} (\mathcal B_2,\tau_2|_{\mathcal
 B_2}).
$$ By Lemma 3.3.2, we know that $ (\mathcal
B_1,\tau_1|_{\mathcal B_1} )*_{red} (\mathcal B_2,\tau_2|_{\mathcal
 B_2})
$ is an MF algebra. It follows from Lemma 3.3.1 that $$ (\mathcal
A_1,\tau_1)*_{red}(\mathcal A_2,\tau_2)
$$ is an MF algebra.
\end{proof}
\begin{lemma}
  The following statements are true:
  \begin{enumerate}
    \item Suppose that $X$ is a compact metric space and $C(X)$ is the unital C$^*$-algebra
   consisting all continuous functions on $X$. Let $\tau$ be
    a faithful tracial state on $C(X)$. Then there are a unital separable  AF
    algebra $\mathcal A$ with a faithful trace $\psi$ and a unital embedding
    $\rho:C(X) \rightarrow \mathcal A$ such that
    $\tau(x)=\psi(\rho(x))$ for all $x\in C(X)$.
    \item Suppose that $\mathcal B\simeq \oplus_{i=1}^k
    \big(\mathcal M_{n_i}(\Bbb C)\otimes C(X_i)\big)$ is a unital separable
    C$^*$-algebra with a faithful tracial state $\tau$, where each
    $X_i$ is a compact metric space for $1\le i\le k$. Then there are a unital separable  AF
    algebra $\mathcal A$ with a faithful trace $\psi$ and a unital embedding
    $\rho:\mathcal B \rightarrow \mathcal A$ such that
    $\tau(x)=\psi(\rho(x))$ for all $x\in \mathcal B$.
  \end{enumerate}
\end{lemma}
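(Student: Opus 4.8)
The plan is to prove (1) directly by a step-function approximation, and then to deduce (2) from it by tensoring with matrix algebras and forming a finite direct sum.

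For (1): by the Riesz representation theorem $\tau(f)=\int_X f\,d\mu$ for a regular Borel probability measure $\mu$ on $X$ which has full support, since $\tau$ is faithful. For each $n$ I would fix a finite Borel partition $\mathcal P_n=\{E_{n,1},\dots,E_{n,k_n}\}$ of $X$ into nonempty Borel sets of diameter $<1/n$ with $\mathcal P_{n+1}$ refining $\mathcal P_n$ (for instance, take the Voronoi cells of a finite $\tfrac1{2n}$-net of $X$, breaking ties by index, and pass to the common refinement with $\mathcal P_{n-1}$). Inside the C$^*$-algebra $B(X)$ of bounded Borel functions on $X$ with the supremum norm, let $\mathcal A_n$ be the linear span of the pairwise orthogonal projections $\mathbf 1_{E_{n,1}},\dots,\mathbf 1_{E_{n,k_n}}$; then $\mathcal A_n\cong\Bbb C^{k_n}$, $\mathcal A_n\subseteq\mathcal A_{n+1}$ because $\mathcal P_{n+1}$ refines $\mathcal P_n$, and $\mathcal A_0:=\overline{\bigcup_n\mathcal A_n}\subseteq B(X)$ is a unital separable AF algebra. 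The key point is that $C(X)\subseteq\mathcal A_0$: for $f\in C(X)$ and a choice $x_{n,j}\in E_{n,j}$, the step function $f_n=\sum_j f(x_{n,j})\mathbf 1_{E_{n,j}}$ lies in $\mathcal A_n$ and $\|f-f_n\|_\infty\to0$ by uniform continuity of $f$ on the compact metric space $X$. Thus the inclusion $\rho\colon C(X)\hookrightarrow\mathcal A_0$ is a unital $*$-monomorphism, and $\psi_0(g):=\int_X g\,d\mu$ is a tracial state on $\mathcal A_0$ with $\psi_0\circ\rho=\tau$. To obtain faithfulness, set $\mathcal N=\{g\in\mathcal A_0:\psi_0(g^*g)=0\}$, a closed ideal of the commutative algebra $\mathcal A_0$, and put $\mathcal A=\mathcal A_0/\mathcal N$; this is again a unital separable AF algebra (quotients of AF algebras are AF), $\psi_0$ descends to a faithful tracial state $\psi$, and the composite $C(X)\xrightarrow{\rho}\mathcal A_0\twoheadrightarrow\mathcal A$ remains injective because $\int_X|f|^2\,d\mu>0$ for every nonzero $f\in C(X)$ by fullness of $\operatorname{supp}\mu$, and remains trace-preserving. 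This $(\mathcal A,\psi,\rho)$ proves (1).

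For (2): I would first record that any tracial state on $M_m(\Bbb C)\otimes C(Y)\cong M_m(C(Y))$ has the form $\tau_m\otimes\nu$ for a unique Borel probability measure $\nu$ on $Y$, and is faithful iff $\nu$ is — this is forced by the identities $\phi(e_{ii}\otimes f)=\phi(e_{11}\otimes f)$ and $\phi(e_{ij}\otimes f)=0$ for $i\ne j$, obtained from the matrix units. Consequently the faithful trace $\tau$ on $\mathcal B=\bigoplus_{i=1}^k\big(M_{n_i}(\Bbb C)\otimes C(X_i)\big)$ decomposes as $\tau=\sum_{i=1}^k\beta_i\,(\tau_{n_i}\otimes\mu_i)$ with $\beta_i>0$, $\sum_i\beta_i=1$, and each $\mu_i$ a full-support probability measure on $X_i$. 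Apply (1) to each $(C(X_i),\mu_i)$ to get a unital separable AF algebra $\mathcal A_i$ with a faithful trace $\psi_i$ and a unital trace-preserving $*$-monomorphism $\rho_i\colon C(X_i)\hookrightarrow\mathcal A_i$. Then $M_{n_i}(\Bbb C)\otimes\mathcal A_i$ is a unital separable AF algebra, $\operatorname{id}\otimes\rho_i$ is a unital $*$-monomorphism, and $\tau_{n_i}\otimes\psi_i$ is a faithful trace pulling back to $\tau_{n_i}\otimes\mu_i$. Taking the finite direct sum, $\mathcal A:=\bigoplus_{i=1}^k\big(M_{n_i}(\Bbb C)\otimes\mathcal A_i\big)$ is a unital separable AF algebra, $\rho:=\bigoplus_i(\operatorname{id}\otimes\rho_i)$ is a unital $*$-monomorphism of $\mathcal B$ into $\mathcal A$, and $\psi:=\sum_i\beta_i\,(\tau_{n_i}\otimes\psi_i)$ is a faithful tracial state on $\mathcal A$, each summand being faithful and each $\beta_i>0$, with $\psi\circ\rho=\sum_i\beta_i(\tau_{n_i}\otimes\mu_i)=\tau$. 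This proves (2).

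The only genuinely delicate points are, first, the assertion $C(X)\subseteq\mathcal A_0$: this is merely uniform approximation of continuous functions by step functions subordinate to fine Borel partitions, but it is the heart of (1) and is exactly what forces the partitions to shrink in diameter. Second, the passage to a faithful trace, which is why I prefer to quotient out the trace-kernel rather than trying to build the partitions with all cells of positive measure — a Borel set can fail to admit a finite partition into small-diameter cells of positive measure even when $\mu$ has full support on the whole of $X$. Everything else (the form of tracial states on $M_n\otimes C(X)$ and on finite direct sums, stability of the AF property under matrix amplification, finite direct sums, and quotients) is routine, and no estimates beyond uniform continuity on a compact metric space are needed.
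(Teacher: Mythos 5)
Your proof is correct, and it reaches the conclusion by a genuinely different (more elementary) route than the paper. The paper's own argument for (1) passes to the GNS representation of $(C(X),\tau)$, notes that the von Neumann algebra $\mathcal M=\rho(C(X))''$ is abelian with the faithful vector trace $\psi$, and uses spectral theory to pick, for each element of a countable dense subset of $C(X)$, countably many spectral projections of $\mathcal M$ whose generated C$^*$-algebra contains that element; the C$^*$-algebra generated by all these projections is abelian and generated by countably many commuting projections, hence AF, and faithfulness of $\psi$ on it is automatic because one is already working inside $\mathcal M\cong L^\infty(X,\mu)$, i.e.\ modulo $\mu$-null sets. Your construction replaces the von Neumann algebra by the concrete C$^*$-algebra of bounded Borel functions on $X$ and replaces spectral projections by indicators of explicit nested metric partitions; the price is that cells may have $\mu$-measure zero, so your $\psi_0$ need not be faithful and you must quotient by the trace kernel $\mathcal N$ (after which $\mathcal A_0/\mathcal N$ is in effect the same sort of subalgebra of $L^\infty(X,\mu)$ that the paper builds directly). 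Both realizations of the idea ``approximate continuous functions uniformly by finite linear combinations of commuting projections'' are sound; yours avoids the GNS machinery and Borel functional calculus, at the cost of the extra quotient step and the standard facts that quotients and matrix amplifications of AF algebras are AF. A further point in your favor: the paper disposes of part (2) with the single sentence ``It suffices to prove (1),'' whereas you supply the actual reduction --- every tracial state on $\mathcal M_m(\Bbb C)\otimes C(Y)$ is $\tau_m\otimes\nu$ and is faithful iff $\nu$ is, so one decomposes $\tau$ over the direct summands, applies (1) to each $C(X_i)$, tensors with $\mathcal M_{n_i}(\Bbb C)$, and takes the direct sum with the weights $\beta_i>0$ --- which is exactly the argument the paper leaves implicit.
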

\begin{proof}
    It suffices to prove (1). Since $X$ is a compact metric space,
    $C(X)$ is a separable C$^*$-algebra. We might assume that
    $\{x_n\}_{n=1}^\infty$ is a dense subset in $C(X)$. Let $\rho$
    be the GNS representation of $C(X)$ on the Hilbert
    space $L^2(C(X),\tau)$. Any element $a$ in $C(X)$ corresponds to a vector $\hat a$
    in $L^2(C(X),\tau)$.
     Let $\psi$ be the vector state defined by $\psi(T)=\langle
     T\hat 1,\hat 1\rangle$ for all $T$ in $B(L^2(C(X),\tau))$, where $1$ is the unit of $C(X)$. Let
     $\mathcal M$ be the von Neumann algebra generated by
     $\rho(C(X))$ in $B(L^2(C(X),\tau))$.
     Since $\tau$ is a faithful trace of $C(X)$ and $\rho$ is the GNS representation of the unital C$^*$-algebra
     $C(X)$,  we know $\rho$ is
    a     faithful $*$-representation of $C(X)$ and $\mathcal M$ is an abelian von Neumann algebra with a
    faithful tracial state $\psi$.

    By spectral theory, for each $x_n$ in $C(X)$, there is a sequence of projections
    $\{p_{n,k}\}_{k=1}^\infty$ in $\mathcal M$ such that $\rho(a)$
    is in the C$^*$-subalgebra, $C^*(\{p_{n,k}\}_{k=1}^\infty)$,
    generated by $\{p_{n,k}\}_{k=1}^\infty$ in $\mathcal M$. Let
    $\mathcal A$ be the unital C$^*$-algebra generated by
    $\{p_{n,k}\}_{n,k=1}^\infty$ in $\mathcal M$. Therefore
     $\mathcal A$ is an unital AF algebra.
    Moreover $\psi$ is a
    faithful tracial state on $\mathcal A$ and $\rho$ is a unital
    embedding from $C(X)$ into $\mathcal A$ satisfying
    $\tau(x)=\psi(\rho(x))$ for all $x\in C(X)$.
\end{proof}

Recall the definition of AH algebra in the sense of Blackadar (see
Definition 2.1 in \cite{Bl}).
\begin{definition}
A unital separable C$^*$-algebra $\mathcal A$ is an approximately
homogeneous (AH) C$^*$-algebra if $\mathcal A$ is an inductive limit
of a sequence of   homogeneous C$^*$-algebras $\mathcal A_m,
m=1,2,\ldots$, where each $\mathcal
A_m=\oplus_{i=1}^{k_m}\big(\mathcal M_{[m,n_i]}(\Bbb C)\otimes
C(X_{[m,i]})\big)$ and each $X_{[m,i]}$ is a compact matric space.
\end{definition}

\begin{theorem}
Suppose that $\mathcal A_1$  and $\mathcal A_2$  are unital
separable AH algebras with faithful tracial states $\tau_1$, and
$\tau_2$ respectively. Then
$$
(\mathcal A_1,\tau_1)*_{red}(\mathcal A_2,\tau_2)
$$ is an MF algebra.
\end{theorem}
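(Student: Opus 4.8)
The plan is to reduce the AH case to the AF case already proved in Theorem 3.3.1, using the same local approximation scheme as in that proof but inserting one extra step in which Lemma 3.3.3 is used to absorb the homogeneous building blocks into AF algebras. Throughout, the working principle is that the MF property passes to C$^*$-subalgebras (Remark 2.4.1) and is local in the sense of Lemma 3.3.1, so it suffices to approximate an arbitrary finite subset of the free product inside a C$^*$-subalgebra that is visibly MF.

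First I would fix a finite family $x_1,\dots,x_n$ in $(\mathcal A_1,\tau_1)*_{red}(\mathcal A_2,\tau_2)$ and $\epsilon>0$. Since each $\mathcal A_i$ is an inductive limit of homogeneous algebras, after replacing each building block by its (automatically closed) image in $\mathcal A_i$ we get an increasing chain of unital C$^*$-subalgebras $\mathcal A_{i,1}\subseteq\mathcal A_{i,2}\subseteq\cdots$ with $I_{\mathcal A_i}\in\mathcal A_{i,m}$ and dense union, where each $\mathcal A_{i,m}$ is a quotient of a homogeneous algebra $\oplus_j(\mathcal M_{n_j}(\mathbb C)\otimes C(X_j))$ with the $X_j$ compact metric; such a quotient is again of the form $\oplus_j(\mathcal M_{n_j}(\mathbb C)\otimes C(Y_j))$ with each $Y_j\subseteq X_j$ closed, hence compact metric, so each $\mathcal A_{i,m}$ is a unital C$^*$-algebra of exactly the type treated in Lemma 3.3.3(2). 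Writing each $x_j$ as a limit of noncommutative polynomials in finitely many elements of $\mathcal A_1$ and of $\mathcal A_2$ and approximating those elements inside the building blocks, I obtain unital homogeneous C$^*$-subalgebras $\mathcal C_1\subseteq\mathcal A_1$, $\mathcal C_2\subseteq\mathcal A_2$ with $I_{\mathcal A_i}\in\mathcal C_i$, together with elements $y_1,\dots,y_n$ lying in the C$^*$-subalgebra $\mathcal C$ generated by $\mathcal C_1$ and $\mathcal C_2$ inside the free product, with $\|x_j-y_j\|<\epsilon$ for all $j$. Taking $\rho_1$ to be the identity representation of the free product and $\rho_2$ the inclusion of $\mathcal C$, this says precisely that $\{x_1,\dots,x_n\}\subseteq_\epsilon\mathcal C$ in the sense of Definition 3.3.1.

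It then remains to see that $\mathcal C$ is an MF algebra. Since $\tau_1*\tau_2$ is a faithful tracial state and $I_{\mathcal A_i}\in\mathcal C_i$, Remark 2.3.1 identifies $\mathcal C$ with $(\mathcal C_1,\tau_1|_{\mathcal C_1})*_{red}(\mathcal C_2,\tau_2|_{\mathcal C_2})$, the restricted states being faithful tracial states on the finite-type algebras $\mathcal C_i$. By Lemma 3.3.3(2) there are unital separable AF algebras $\mathcal D_i$ with faithful traces $\psi_i$ and unital, trace-preserving embeddings $\rho_i\colon\mathcal C_i\hookrightarrow\mathcal D_i$; identifying $\mathcal C_i$ with $\rho_i(\mathcal C_i)\subseteq\mathcal D_i$, a second application of Remark 2.3.1 gives a unital embedding
$$(\mathcal C_1,\tau_1|_{\mathcal C_1})*_{red}(\mathcal C_2,\tau_2|_{\mathcal C_2})\ \subseteq\ (\mathcal D_1,\psi_1)*_{red}(\mathcal D_2,\psi_2).$$
The right-hand side is an MF algebra by Theorem 3.3.1, so by Remark 2.4.1 the left-hand side, and hence $\mathcal C$, is an MF algebra. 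Thus for the given $x_1,\dots,x_n$ and $\epsilon$ we have produced an MF algebra $\mathcal C$ with $\{x_1,\dots,x_n\}\subseteq_\epsilon\mathcal C$; as $x_1,\dots,x_n$ and $\epsilon$ were arbitrary, Lemma 3.3.1 yields that $(\mathcal A_1,\tau_1)*_{red}(\mathcal A_2,\tau_2)$ is an MF algebra. (The $n$-factor version stated as Theorem 3.3.3 follows by the same argument, or by induction, once one notes that Theorem 3.3.1, Remark 2.3.1 and Lemma 3.3.3 all have evident $n$-factor analogues.)

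The argument is essentially formal given what has been established, so I do not expect a serious obstacle. The one point that is not pure bookkeeping is the reduction inside paragraph two: verifying that the homogeneous building blocks of an AH algebra, after being cut down to their images in the inductive limit, are still unital direct sums of algebras $\mathcal M_n(\mathbb C)\otimes C(Y)$ with $Y$ compact metric, so that Lemma 3.3.3(2) genuinely applies; the rest is repeated use of the subalgebra-permanence of the MF property and of the locality criterion Lemma 3.3.1.
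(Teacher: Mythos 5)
Your proposal is correct and follows essentially the same route as the paper's own proof: approximate the given finite set inside the free product of homogeneous building blocks, embed those blocks trace-preservingly into AF algebras via Lemma 3.3.3(2), invoke the AF case (Theorem 3.3.1) together with subalgebra-permanence, and finish with the locality criterion Lemma 3.3.1. The only difference is that you spell out more carefully than the paper does that the images of the homogeneous building blocks in the inductive limit remain homogeneous of the required form, which is a worthwhile clarification but not a change of method.
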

\begin{proof}
 For each $i=1,2$, the unital AH algebra $\mathcal A_i$  is an inductive limit of   homogeneous subalgebras
$\{\mathcal A_m^{(i)}\}_{m=1}^\infty$, each of which is
$*$-isomorphic to some $\oplus_{j=1}^k (\mathcal M_{n_j}(\Bbb
C)\otimes C(X_j))$ where each
    $X_j$ is a compact metric space for $1\le j\le k$. By Lemma
    3.3.3, we know for every $x_1,\ldots, x_n$ in $
(\mathcal A_1,\tau_1)*_{red}(\mathcal A_2,\tau_2) $ and
$\epsilon>0$,
    there are a positive integer $m$, unital AF algebras $\mathcal D_1$  and $\mathcal
    D_2$   with faithful tracial states $\psi_1$, and
    $\psi_2$ respectively, such that
    $$
     \{x_1,\ldots, x_n\}\subset_{\epsilon} (\mathcal
     \mathcal A_m^{(1)},\tau_1)*_{red} (\mathcal
     \mathcal A_m^{(2)},\tau_2)  \subseteq  (\mathcal
     D_1,\psi_1)*_{red} (\mathcal
     D_2,\psi_2).
    $$ By Lemma 3.3.1 and Theorem 3.3.1, we know that $
(\mathcal A_1,\tau_1)*_{red}(\mathcal A_2,\tau_2) $  is an MF
algebra.
\end{proof}

Recall a C$^*$-algebra $\mathcal A$ is called a {\em local
AH-algebra} if for every $\epsilon > 0$ and for every finite subset
$a_1, \ldots, a_n$ of $\mathcal A$ there is a  C$^*$-subalgebra
$\mathcal B$ of $\mathcal A$ which   (i) is   homogeneous, i.e.
$\mathcal B$ is $*$-isomorphic to a C*-algebra of the form
$\oplus_{i=1}^{k_m}\big(\mathcal M_{[m,n_i]}(\Bbb C)\otimes
C(X_{[m,i]})\big)$, and (ii) contains elements $b_1,\ldots, b_n$
with $\|a_j-b_j\|\le \epsilon$ for $j = 1,\ldots,n.$ Similar
argument in the proof of Theorem 3.3.2 proves the following
statement.
\begin{proposition}
Suppose that $\mathcal A_1$  and $\mathcal A_2$  are unital,
separable, local AH algebras with faithful tracial states $\tau_1$,
and $\tau_2$ respectively. Then
$$
(\mathcal A_1,\tau_1)*_{red}(\mathcal A_2,\tau_2) \qquad \text{ is
an MF algebra.} $$
\end{proposition}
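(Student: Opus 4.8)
The plan is to mimic the proof of Theorem 3.3.2, replacing the inductive-limit structure of an AH algebra by the weaker local approximation property and, as before, reducing to the AF case (Theorem 3.3.1) through Lemma 3.3.3. Write $\mathcal A=(\mathcal A_1,\tau_1)*_{red}(\mathcal A_2,\tau_2)$, let $\lambda_1,\lambda_2$ be the canonical (faithful, hence isometric) unital embeddings of $\mathcal A_1,\mathcal A_2$ into $\mathcal A$ as in Section 2.3, and let $\tau=\tau_1*\tau_2$ be the free product trace, which is faithful. By Lemma 3.3.1 it suffices, given $x_1,\dots,x_n\in\mathcal A$ and $\epsilon>0$, to exhibit an MF algebra $\mathcal D$ with $\{x_1,\dots,x_n\}\subseteq_\epsilon\mathcal D$. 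So first I would approximate each $x_j$ to within $\epsilon/2$ by a noncommutative $*$-polynomial $P_j$ in finitely many $\lambda_1(a_i)$, $\lambda_2(b_i)$ with $a_1,\dots,a_N\in\mathcal A_1$, $b_1,\dots,b_M\in\mathcal A_2$, and I would arrange $a_N=I_{\mathcal A_1}$, $b_M=I_{\mathcal A_2}$. Next, fixing $\delta\in(0,1/2)$ small enough that a $\delta$-perturbation of the arguments of each $P_j$ changes its value by at most $\epsilon/2$ in norm, I would apply the local AH hypothesis to $\{a_1,\dots,a_N\}$ and to $\{b_1,\dots,b_M\}$ to get homogeneous C$^*$-subalgebras $\mathcal B_1\subseteq\mathcal A_1$, $\mathcal B_2\subseteq\mathcal A_2$ containing elements $a_i'$, $b_i'$ with $\|a_i-a_i'\|\le\delta$, $\|b_i-b_i'\|\le\delta$.

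The step I expect to be the only genuine obstacle is making $\mathcal B_1,\mathcal B_2$ \emph{unital} subalgebras (i.e. $1_{\mathcal B_i}=I_{\mathcal A_i}$), since the definition of local AH algebra used here does not build this in, yet Remark 2.3.1 requires it. I would dispose of it thus: having put $I_{\mathcal A_1}$ among the elements to be approximated, let $p=1_{\mathcal B_1}$; then $pa_N'=a_N'$ because $a_N'\in\mathcal B_1$, so
$$
\|p-I_{\mathcal A_1}\|\le\|p-a_N'\|+\|a_N'-I_{\mathcal A_1}\|=\|p(I_{\mathcal A_1}-a_N')\|+\|a_N'-I_{\mathcal A_1}\|\le 2\delta<1,
$$
and a projection within distance $<1$ of the identity must equal the identity; the same argument works for $\mathcal B_2$. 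In particular $\tau_i|_{\mathcal B_i}$ is then a faithful tracial \emph{state} on $\mathcal B_i$.

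With unitality in hand, Remark 2.3.1 identifies the C$^*$-subalgebra $C^*\bigl(\lambda_1(\mathcal B_1)\cup\lambda_2(\mathcal B_2)\bigr)$ of $\mathcal A$ with $(\mathcal B_1,\tau_1|_{\mathcal B_1})*_{red}(\mathcal B_2,\tau_2|_{\mathcal B_2})$. Since $\mathcal B_1,\mathcal B_2$ are homogeneous, Lemma 3.3.3(2) supplies unital separable AF algebras $\mathcal D_1,\mathcal D_2$ with faithful tracial states $\psi_1,\psi_2$ and trace-preserving unital embeddings $\rho_i\colon\mathcal B_i\hookrightarrow\mathcal D_i$; applying Remark 2.3.1 once more to the unital subalgebras $\rho_i(\mathcal B_i)\subseteq\mathcal D_i$ and noting that a reduced free product depends only on the pairs (algebra, state) up to isomorphism, I get a trace-preserving embedding
$$
(\mathcal B_1,\tau_1|_{\mathcal B_1})*_{red}(\mathcal B_2,\tau_2|_{\mathcal B_2})\hookrightarrow(\mathcal D_1,\psi_1)*_{red}(\mathcal D_2,\psi_2)=:\mathcal D,
$$
and $\mathcal D$ is an MF algebra by Theorem 3.3.1, so its separable subalgebra $C^*\bigl(\lambda_1(\mathcal B_1)\cup\lambda_2(\mathcal B_2)\bigr)$ is MF as well (Remark 2.4.2). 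Finally, by the choice of $\delta$ one has $\|x_j-P_j(\lambda_1(a_1'),\dots,\lambda_1(a_N'),\lambda_2(b_1'),\dots,\lambda_2(b_M'))\|\le\epsilon$ for every $j$, with the perturbed polynomials lying in $C^*\bigl(\lambda_1(\mathcal B_1)\cup\lambda_2(\mathcal B_2)\bigr)$; hence $\{x_1,\dots,x_n\}\subseteq_\epsilon$ this MF algebra, and Lemma 3.3.1 yields that $(\mathcal A_1,\tau_1)*_{red}(\mathcal A_2,\tau_2)$ is an MF algebra. Everything past the unitality point is the same routine perturbation-and-functoriality bookkeeping already carried out in the proofs of Theorems 3.3.1 and 3.3.2.
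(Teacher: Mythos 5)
Your proof is correct and follows exactly the route the paper intends: the paper gives no separate argument for this proposition, stating only that it follows by the ``similar argument'' used for Theorem 3.3.2, and your write-up is precisely that argument (approximate by polynomials in elements of homogeneous subalgebras, pass to AF algebras via Lemma 3.3.3, invoke Theorem 3.3.1 and Lemma 3.3.1). Your explicit treatment of the unitality of the locally approximating homogeneous subalgebras --- forcing $1_{\mathcal B_i}=I_{\mathcal A_i}$ by including the identity among the approximated elements and using that a projection within distance $<1$ of the identity equals the identity --- correctly fills in the one detail the paper leaves implicit.
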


Apparently, Theorem 3.3.2 can be generalized as follows.

\begin{theorem}
Suppose that $\mathcal A_i$, $i=1,\ldots, n$, is a family of  unital
separable AH algebras   with  faithful tracial states $\tau_i$. Then
$$
(\mathcal A_1,\tau_1)*_{red}\cdots *_{red}(\mathcal A_n,\tau_n)
\qquad \text{ is an MF algebra.} $$
\end{theorem}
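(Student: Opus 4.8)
The plan is to run, with $n$ legs, the same chain of reductions that underlies the proof of Theorem 3.3.2. A naive induction on $n$ is not available, because writing $(\mathcal A_1,\tau_1)*_{red}\cdots*_{red}(\mathcal A_n,\tau_n)$ as $\big((\mathcal A_1,\tau_1)*_{red}\cdots*_{red}(\mathcal A_{n-1},\tau_{n-1})\big)*_{red}(\mathcal A_n,\tau_n)$ fails: the first factor is in general not an AH algebra, while the proof of Theorem 3.3.2 uses AH-ness of both legs. So I would fix $x_1,\ldots,x_k$ in $(\mathcal A_1,\tau_1)*_{red}\cdots*_{red}(\mathcal A_n,\tau_n)$ and $\epsilon>0$ and reduce step by step, using throughout that the reduced free product is associative, that a free product of faithful tracial states is again a faithful tracial state, and the evident $n$-leg analogues of Remark 2.3.1 and of Lemmas 3.3.1, 3.3.2, 3.3.3 and Theorem 3.3.1. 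First, approximating each $\mathcal A_i$ by homogeneous subalgebras and embedding these trace-preservingly and unitally into AF algebras via Lemma 3.3.3, one obtains $\{x_1,\ldots,x_k\}\subseteq_{\epsilon}(\mathcal D_1,\psi_1)*_{red}\cdots*_{red}(\mathcal D_n,\psi_n)$ with each $\mathcal D_i$ a unital separable AF algebra. Next, approximating inside AF algebras by finite-dimensional unital subalgebras reduces to finite-dimensional $\mathcal E_i$; then, applying Lemma 3.1.3 one leg at a time and stitching the intermediate unital faithful $*$-representations together with Voiculescu's theorem, exactly as in the proof of Lemma 3.3.2, reduces further to finite-dimensional $\mathcal E_i$ carrying faithful \emph{rational} tracial states; and finally, embedding each such $\mathcal E_i$ trace-preservingly and unitally into a common matrix algebra $(\mathcal M_m(\Bbb C),\tau_m)$ reduces everything, via Lemma 3.3.1, to showing that the $n$-fold reduced free product $(\mathcal M_m(\Bbb C),\tau_m)*_{red}\cdots*_{red}(\mathcal M_m(\Bbb C),\tau_m)$ is an MF algebra.

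For that $n$-fold matrix case I would bootstrap off Theorem 3.3.2 itself. Both $\mathcal M_m(\Bbb C)$ and $C(\Bbb T)$ are unital separable AH algebras with faithful tracial states ($\tau_m$ and the normalized integration $\nu$), so by Theorem 3.3.2 the reduced free product $\mathcal C:=(\mathcal M_m(\Bbb C),\tau_m)*_{red}(C(\Bbb T),\nu)$ is an MF algebra. Inside $\mathcal C$, let $\mathcal B\cong\mathcal M_m(\Bbb C)$ be the first free factor and let $w$ be the canonical unitary generator of $C(\Bbb T)=C_r^*(\Bbb Z)$; then $w$ is a Haar unitary and is free from $\mathcal B$ with respect to the free-product trace $\tau$. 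Put $\mathcal B_j:=w^{j}\mathcal B w^{-j}$ for $j=0,1,\ldots,n-1$. I claim $\mathcal B_0,\ldots,\mathcal B_{n-1}$ are free with respect to $\tau$: a typical alternating centered word in these subalgebras can be written $w^{k_1}\beta_1 w^{k_2-k_1}\beta_2\cdots\beta_l w^{-k_l}$, where each $\beta_i\in\mathcal B$ has $\tau(\beta_i)=0$ (by traciality $\tau(w^{k}\beta w^{-k})=\tau(\beta)$) and the indices $k_i\in\{0,\ldots,n-1\}$ satisfy $k_i\ne k_{i+1}$, so every nontrivial power $w^{k_{i+1}-k_i}$ is centered in $C^*(w)$, no two $\beta_i$'s are adjacent, and hence this is a reduced alternating word in the free pair $\{\mathcal B,\,C^*(w)\}$ and has $\tau$-value $0$. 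Thus $\mathcal B_0,\ldots,\mathcal B_{n-1}$ are $n$ mutually free copies of $\mathcal M_m(\Bbb C)$, each with $\tau$ restricting to $\tau_m$; since $\tau$ is faithful, the C$^*$-subalgebra they generate in $\mathcal C$ is $*$-isomorphic to the $n$-fold reduced free product $(\mathcal M_m(\Bbb C),\tau_m)*_{red}\cdots*_{red}(\mathcal M_m(\Bbb C),\tau_m)$, which is therefore an MF algebra, being a C$^*$-subalgebra of $\mathcal C$ (Remark 2.4.1). (Alternatively, one can adapt the proof of Lemma 3.2.3 directly, embedding the $n$-fold matrix free product into a reduced crossed product $C_r^*(\Bbb Z_m*F_M)\rtimes_{\alpha,r}\Bbb Z_m$ for a suitable $M$ by means of $n-1$ conjugating free generators, the required freeness being the reduced-word computation in the Claim of that proof; this crossed product is an MF algebra by Lemma 3.2.2 and the argument of Corollary 3.2.1. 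The bootstrap above is shorter.)

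The conceptual obstacle is exactly the unavailability of the naive induction; the way around it is the bootstrap $\mathcal C=(\mathcal M_m(\Bbb C),\tau_m)*_{red}(C(\Bbb T),\nu)$, which is the one genuinely new idea and which reduces the whole statement to the two-leg result Theorem 3.3.2 plus the elementary fact that the powers of a free Haar unitary conjugate one copy of $\mathcal M_m(\Bbb C)$ into $n$ free copies. Everything else is a routine $n$-leg transcription of the two-leg arguments; the fussiest part is the rational-trace step, where Lemma 3.1.3 must be iterated over the $n$ legs while keeping the intermediate unital faithful $*$-representations compatible — done, as in the proof of Lemma 3.3.2, by passing to essential representations and invoking Voiculescu's theorem at each merge, at the cost of enlarging $\epsilon$ by a constant depending only on $n$, which is harmless by Lemma 3.3.1.
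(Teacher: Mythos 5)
Your argument is correct, and its one genuinely new ingredient --- embedding an $n$-fold reduced free product of copies of a single tracial C$^*$-algebra into its two-fold reduced free product with $C^*(w)$ for a free Haar unitary $w$, via the free conjugates $w^{j}\mathcal Bw^{-j}$ --- is exactly the idea the paper uses too. The difference is where you apply it. The paper's proof is three lines: set $\mathcal A=\mathcal A_1\otimes_{min}\cdots\otimes_{min}\mathcal A_n$ with the tensor-product trace $\tau=\tau_1\otimes_{min}\cdots\otimes_{min}\tau_n$; this is again a unital separable AH algebra with a faithful trace, and each $\mathcal A_i$ sits inside it unitally and trace-preservingly, so by the $n$-leg version of Remark 2.3.1,
$$
(\mathcal A_1,\tau_1)*_{red}\cdots *_{red}(\mathcal A_n,\tau_n)\subseteq (\mathcal A,\tau)*_{red}\cdots *_{red}(\mathcal A,\tau)\subseteq (\mathcal A,\tau)*_{red}(C_r^*(\Bbb Z),\tau_{\Bbb Z}),
$$
and the right-hand side is MF by a single application of Theorem 3.3.2 since $C_r^*(\Bbb Z)\cong C(\Bbb T)$ is AH. In other words, the observation that AH algebras are closed under minimal tensor products lets the paper package all $n$ legs into one AH algebra \emph{before} doing any approximation, so the entire $n$-leg transcription of the reduction chain (homogeneous $\to$ AF $\to$ finite-dimensional $\to$ rational traces $\to$ a common matrix algebra) that occupies the first half of your proposal is unnecessary. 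Your route is valid --- the transcription really is routine, and your freeness computation for the conjugates $w^{j}\mathcal Bw^{-j}$ is the standard reduced-word argument --- but it costs you a careful re-run of Lemmas 3.1.3, 3.3.1, 3.3.2, 3.3.3 with $n$ legs and repeated invocations of Voiculescu's theorem, whereas the tensor-product trick reduces everything to the already-proved two-leg theorem in one step. What your version buys in exchange is a slightly more self-contained picture at the matrix level and no reliance on the (easy but unstated) fact that the tensor-product trace on $\mathcal A_1\otimes_{min}\cdots\otimes_{min}\mathcal A_n$ is faithful.
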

\begin{proof}
Let $\mathcal A= \mathcal A_1\otimes_{min}\cdots
\otimes_{min}\mathcal A_n$ be the minimal tensor product of
$\mathcal A_1,\ldots, \mathcal A_n$ and $\tau=\tau_1\otimes_{min}
\cdots \otimes_{min} \tau_n$ be the tensor product of the  tracial
states $\tau_1,\ldots,\tau_n$. Then $\mathcal A$ is an AH algebra
with a faithful trace $\tau$. Let $C_r^*(\Bbb Z)$ be the reduced
C$^*$-algebra of the group $\Bbb Z$ with a canonical faithful trace
$\tau_{\Bbb Z} $.

By Proposition 3.3.1 or Theorem 3.3.2, we know that $(\mathcal
A,\tau)*_{red} (C_r^*(\Bbb Z), \tau_{\Bbb Z})$ is an MF algebra. And
we note that
$$
(\mathcal A_1,\tau_1)*_{red}\cdots *_{red}(\mathcal
A_n,\tau_n)\subseteq (\mathcal A ,\tau )*_{red}\cdots
*_{red}(\mathcal A ,\tau )\subseteq(\mathcal A,\tau)*_{red}
(C_r^*(\Bbb Z), \tau_{\Bbb Z}).
$$ Therefore $$
(\mathcal A_1,\tau_1)*_{red}\cdots *_{red}(\mathcal A_n,\tau_n)
$$ is an MF algebra.
\end{proof}

\begin{corollary}
Suppose that $(\mathcal A,\tau)$ is a C$^*$-free probability space.
Let $x_1,\ldots, x_n$ be a family of self-adjoint elements in
$\mathcal A$ such that $x_1,\ldots, x_n$ are free with respect to
$\tau$. Then the C$^*$-subalgebra generated by $x_1,\ldots, x_n$ in
$\mathcal A$ is an MF algebra. In particular,
$$
\delta_{top}(x_1,\ldots, x_n)\ge 0,
$$ where $
\delta_{top}(x_1,\ldots, x_n)   $ is the Voiculescu's topological
free entropy dimension.
\end{corollary}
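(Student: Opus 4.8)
The plan is to reduce the corollary to Theorem 3.3.3. The point is that each self-adjoint $x_i$ generates a \emph{commutative} C$^*$-algebra, which is trivially an AH algebra, and that freeness identifies the generated C$^*$-algebra with the reduced free product of these commutative algebras. Throughout I take $\tau$ to be a faithful tracial state, which is the part of ``C$^*$-free probability space'' that actually gets used; faithfulness matters here because being MF passes to C$^*$-subalgebras but not to quotients, so one needs $C^*(x_1,\dots,x_n)$ to sit \emph{inside} the reduced free product and not merely to be a quotient of it.

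First I would record the structure of the generators. For each $i$ set $\mathcal A_i:=C^*(1,x_i)\subseteq\mathcal A$. Since $x_i=x_i^*$, the continuous functional calculus gives a unital $*$-isomorphism $\mathcal A_i\cong C(X_i)$, where $X_i=\sigma_{\mathcal A}(x_i)\subseteq\mathbb R$ is compact, hence a compact metric space; writing $C(X_i)\cong\mathcal M_1(\mathbb C)\otimes C(X_i)$ exhibits $\mathcal A_i$ as a unital separable homogeneous C$^*$-algebra, in particular a unital separable AH algebra in the sense of Definition 3.3.2. The restriction $\tau_i:=\tau|_{\mathcal A_i}$ is automatically tracial ($\mathcal A_i$ being abelian) and is faithful because $\tau$ is.

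Next I would identify the generated algebra with a reduced free product. Saying that $x_1,\dots,x_n$ are free with respect to $\tau$ is exactly saying that the subalgebras $\mathcal A_1,\dots,\mathcal A_n$ are free in $(\mathcal A,\tau)$. Put $\mathcal B:=C^*(\mathcal A_1,\dots,\mathcal A_n)$, so that $\tau|_{\mathcal B}$ is a faithful tracial state. Using the construction of the reduced free product via free products of GNS Hilbert spaces recalled in Section 2.3 (Voiculescu; see also \cite{Avit}), freeness of the $\mathcal A_i$ with respect to the faithful trace $\tau|_{\mathcal B}$ yields a trace-preserving $*$-isomorphism
$$
\mathcal B\;\cong\;(\mathcal A_1,\tau_1)*_{red}(\mathcal A_2,\tau_2)*_{red}\cdots*_{red}(\mathcal A_n,\tau_n).
$$
Since each $(\mathcal A_i,\tau_i)$ is a unital separable AH algebra with a faithful tracial state, Theorem 3.3.3 shows the right-hand side is an MF algebra, and hence so is $C^*(x_1,\dots,x_n)=\mathcal B$. (One can also avoid quoting uniqueness of the reduced free product and argue as in the proofs of Theorems 3.3.2 and 3.3.3: approximate each $x_i$ by a polynomial in finitely many of its spectral projections, observe that the finite-dimensional abelian algebras these projections generate are free, so the C$^*$-algebra they generate is a reduced free product of finite-dimensional C$^*$-algebras --- MF by Lemma 3.3.2 and the embedding device used for Theorem 3.3.3 --- and then apply Lemma 3.3.1.) Finally, since $C^*(x_1,\dots,x_n)$ is MF, the topological free entropy dimension $\delta_{top}(x_1,\dots,x_n)$ is defined, and the inequality $\delta_{top}(x_1,\dots,x_n)\ge 0$ is immediate from its definition.

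The one step that needs genuine care is the identification of $\mathcal B$ with the reduced free product as an \emph{isomorphism} rather than merely as a trace-preserving surjection; this is exactly where faithfulness of $\tau$ is indispensable, and it is the only place where freeness --- as opposed to mere self-adjointness of the $x_i$ --- is invoked. Everything else is bookkeeping, with Theorem 3.3.3 doing the real work of showing that the reduced free product of AH algebras is MF.
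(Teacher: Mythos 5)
Your proposal is correct and takes essentially the same route as the paper: the paper's own proof is a one-line deferral to Theorem 3.3.3 together with the definitions of MF algebra and $\delta_{top}$, and your write-up simply fills in the intended reduction (each $C^*(1,x_i)\cong C(\sigma(x_i))$ is a commutative, hence AH, algebra, and freeness with respect to the faithful trace identifies $C^*(x_1,\ldots,x_n)$ with the reduced free product). Your explicit flagging of where faithfulness of $\tau$ is needed is a point the paper leaves implicit, but it is not a different argument.
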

\begin{proof}
It follows directly from the definition of Voiculescu's topological
free entropy dimension \cite{Voi} and the definition of MF algebra
(see Lemma 2.4.1).
\end{proof}
More discussions on topological free entropy dimension can be found
in \cite{HaSh2}, \cite{HaSh3}, and \cite{HaSh4}.

\subsection{Reduced free product of unital ASH algebras}
Recall if $\mathcal B$ is a unital sub-homogeneous C$^*$-algebra (or
equivalently, a C$^*$-subalgebra of a homogeneous C$^*$-algebra),
then all irreducible $*$-representations of $\mathcal B$ are finite
dimensional with $dim\le n$ for some positive integer $n\in\Bbb  N$.
Suppose $\tau$ is a faithful tracial state of $\mathcal B$ and
$\mathcal H$ is the Hilbert space  $ L^2(\mathcal B,\tau)  $. Each
element $a$ in $\mathcal B$ corresponds to a vector $\hat a$ in
$L^2(\mathcal B,\tau)$. Let $\pi$ be the GNS representation of
$\mathcal B$ on the Hilbert space $L^2(\mathcal B,\tau)$. Then the
von Neumann algebra, $\pi(\mathcal B)''$, generated by $\pi(\mathcal
B)$  in $B(L^2(\mathcal B,\tau))$ has the form:
$$
\pi(\mathcal B)'' \simeq \oplus_{k=1}^n ( \mathcal M_k(\Bbb C)
\otimes \mathcal B_k ),
$$where each $\mathcal B_k$ is either a unital abelian von Neumann algebra or $0$.  Let $\psi$ be the vector state defined by $\psi(T)=\langle
     T\hat 1,\hat 1\rangle$ for all $T$ in $B(L^2(\mathcal B,\tau))$,
      where $1$ is the unit of $\mathcal B$.
     Since $\tau$ is a faithful trace of $\mathcal B$ and $\pi$ is the GNS representation of the
     unital C$^*$-algebra $\mathcal B$, we know  $\psi$ is  a
faithful trace   of $\pi(\mathcal B)''$ and
$$
\tau(x)=\psi(\pi(x)), \qquad \forall \ x\in\mathcal B.
$$ Hence $\pi$ is a unital, trace-preserving, embedding from $\mathcal B$ into $\pi(\mathcal B)''$ such that
$$
(\mathcal B, \tau)\subseteq (\pi(\mathcal B)'', \psi).
$$
 By similar
argument  as in Lemma 3.3.3, we have the following result.
\begin{lemma}
Suppose that $\mathcal B$ is a unital separable sub-homogeneous
C$^*$-algebra with a faithful tracial state $\tau$. Then there  are
a unital separable AF
    algebra $\mathcal A$ with a faithful trace $\psi$ and a unital embedding
    $\rho:\mathcal B  \rightarrow \mathcal A$ such that
    $\tau(x)=\psi(\rho(x))$ for all $x\in \mathcal B$.
\end{lemma}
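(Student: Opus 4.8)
The plan is to follow the template of the proof of Lemma 3.3.3, now carrying along the finite-dimensional matrix factors that appear in the GNS von Neumann algebra of a sub-homogeneous C$^*$-algebra. By the discussion immediately preceding the lemma, the GNS representation $\pi$ of $(\mathcal B,\tau)$ on $\mathcal H=L^2(\mathcal B,\tau)$ yields a unital, trace-preserving embedding $(\mathcal B,\tau)\subseteq(\mathcal M,\psi)$, where $\mathcal M=\pi(\mathcal B)''\simeq\oplus_{k=1}^n(\mathcal M_k(\Bbb C)\otimes\mathcal B_k)$, each $\mathcal B_k$ being a unital abelian von Neumann algebra or $0$, and $\psi$ is a faithful trace on $\mathcal M$ with $\tau(x)=\psi(\pi(x))$. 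Hence it suffices to produce a unital separable AF subalgebra $\mathcal A$ of $\mathcal M$ containing $\pi(\mathcal B)$ and sharing the unit of $\mathcal M$; then $\rho:=\pi$ and the restriction $\psi|_{\mathcal A}$ will do the job, since the restriction of a faithful tracial state to a unital C$^*$-subalgebra is again a faithful tracial state.

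First I would fix a countable dense subset $\{x_m\}_{m=1}^\infty$ of $\mathcal B$ and decompose each $\pi(x_m)=\oplus_{k=1}^n y_{m,k}$ with $y_{m,k}\in\mathcal M_k(\Bbb C)\otimes\mathcal B_k$. Fixing a system of matrix units $\{e_{ij}^{(k)}\}$ of $\mathcal M_k(\Bbb C)$, write $y_{m,k}=\sum_{i,j=1}^k e_{ij}^{(k)}\otimes b_{ij}^{(m,k)}$; the coefficients $\{b_{ij}^{(m,k)}\}$ form a countable subset of the abelian von Neumann algebra $\mathcal B_k$. By spectral theory, the self-adjoint parts of these coefficients are norm-limits of finite real-linear combinations of their own spectral projections, so there is a countable family of (automatically commuting) projections $\{p_l^{(k)}\}_{l=1}^\infty$ in $\mathcal B_k$, with $1_{\mathcal B_k}$ among them, such that every $b_{ij}^{(m,k)}$ lies in the separable abelian C$^*$-algebra $\mathcal C_k:=C^*(\{p_l^{(k)}\}_l)$. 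Being a separable commutative C$^*$-algebra generated by projections, $\mathcal C_k$ is AF.

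Now set $\mathcal A_k:=\mathcal M_k(\Bbb C)\otimes\mathcal C_k$ for each $k$ with $\mathcal B_k\ne 0$, and $\mathcal A:=\oplus_k\mathcal A_k\subseteq\mathcal M$. Each $\mathcal A_k$ is the tensor product of a finite-dimensional C$^*$-algebra with an AF algebra, hence AF, and a finite direct sum of AF algebras is AF; thus $\mathcal A$ is a unital separable AF algebra with the same unit as $\mathcal M$. By construction $\pi(x_m)\in\mathcal A$ for every $m$, and since $\{x_m\}$ is dense in $\mathcal B$ and $\mathcal A$ is norm-closed, we get $\pi(\mathcal B)\subseteq\mathcal A$. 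Taking $\rho=\pi$ and $\psi=\psi|_{\mathcal A}$ finishes the argument: $\psi$ is a faithful trace on $\mathcal A$ because it is one on $\mathcal M$, and $\tau(x)=\psi(\pi(x))=\psi(\rho(x))$ for all $x\in\mathcal B$.

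I do not expect a genuine obstacle here; the only point beyond the purely abelian case treated in Lemma 3.3.3 is transporting the finite-dimensional matrix factors $\mathcal M_k(\Bbb C)$ through the construction, which is harmless since tensoring an AF algebra with a finite-dimensional C$^*$-algebra remains AF. The mild technical care required is in verifying that countably many spectral projections capture all the matrix-coefficient elements $b_{ij}^{(m,k)}$, which follows from separability of $\mathcal B$ together with the spectral theorem, and in invoking the standard fact that a separable commutative C$^*$-algebra generated by commuting projections is AF.
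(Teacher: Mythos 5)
Your proof is correct and follows essentially the same route as the paper, which simply invokes "a similar argument as in Lemma 3.3.3" applied to the decomposition $\pi(\mathcal B)''\simeq\oplus_{k=1}^n(\mathcal M_k(\Bbb C)\otimes\mathcal B_k)$ from the discussion preceding the lemma. You have merely written out in detail the step the paper leaves implicit, namely carrying the matrix factors $\mathcal M_k(\Bbb C)$ through the spectral-projection construction of Lemma 3.3.3.
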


Recall an ASH  algebra (approximately sub-homogeneous C$^*$-algebra)
is an inductive limit of  a sequence of  sub-homogeneous
C$^*$-algebras. By similar arguments as in Theorem 3.3.2 and Theorem
3.3.3 (using Lemma 3.4.1 instead of Lemma 3.3.3), we have the
following result.
\begin{theorem}
Suppose that $\mathcal A_i$, $i=1,\ldots, n$, is a family of  unital
separable ASH algebras with  faithful tracial states $\tau_i$. Then
$$
(\mathcal A_1,\tau_1)*_{red}\cdots *_{red}(\mathcal A_n,\tau_n)
$$ is an MF algebra.
\end{theorem}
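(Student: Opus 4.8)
The plan is to follow verbatim the scheme of the proofs of Theorem 3.3.2 and Theorem 3.3.3, substituting Lemma 3.4.1 for Lemma 3.3.3 at the one place where the homogeneous structure was used. Fix a finite family $x_1,\ldots, x_r$ in $(\mathcal A_1,\tau_1)*_{red}\cdots*_{red}(\mathcal A_n,\tau_n)$ and let $\epsilon>0$. For each $i$ write the unital separable ASH algebra $\mathcal A_i$ as the closure of an increasing union $\bigcup_m \mathcal B^{(i)}_m$ of unital separable sub-homogeneous subalgebras with $I_{\mathcal A_i}\in\mathcal B^{(i)}_m$. Since the free product state $\tau=\tau_1*\cdots*\tau_n$ is faithful and each $\mathcal B^{(i)}_m$ is a unital subalgebra of $\mathcal A_i$, the C$^*$-subalgebra of the reduced free product generated by $\mathcal B^{(1)}_m,\ldots,\mathcal B^{(n)}_m$ is $*$-isomorphic to $(\mathcal B^{(1)}_m,\tau_1|)*_{red}\cdots*_{red}(\mathcal B^{(n)}_m,\tau_n|)$; this is exactly the identification used in the proof of Theorem 3.3.1 (cf. Remark 2.3.1). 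Taking $m$ large and approximating each $x_j$ within $\epsilon$ by a noncommutative polynomial in finitely many elements drawn from these subalgebras, we obtain
$$
\{x_1,\ldots,x_r\}\subseteq_\epsilon (\mathcal B^{(1)}_m,\tau_1|)*_{red}\cdots*_{red}(\mathcal B^{(n)}_m,\tau_n|)
$$
in the sense of Definition 3.3.1 --- indeed the left side is literally contained in the right side.

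Next I would apply Lemma 3.4.1 to each factor: the sub-homogeneous algebra $\mathcal B^{(i)}_m$ with the faithful trace $\tau_i|_{\mathcal B^{(i)}_m}$ embeds unitally and trace-preservingly into a unital separable AF algebra $(\mathcal D_i,\psi_i)$ with $\psi_i$ faithful. Applying Remark 2.3.1 repeatedly, these embeddings yield an inclusion
$$
(\mathcal B^{(1)}_m,\tau_1|)*_{red}\cdots*_{red}(\mathcal B^{(n)}_m,\tau_n|)\ \subseteq\ (\mathcal D_1,\psi_1)*_{red}\cdots*_{red}(\mathcal D_n,\psi_n).
$$
AF algebras are AH algebras, so by Theorem 3.3.3 the right-hand side is an MF algebra, and hence so is the C$^*$-subalgebra on the left-hand side (Remark 2.4.1). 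Combining the two displays, for every finite family $x_1,\ldots,x_r$ and every $\epsilon>0$ the set $\{x_1,\ldots,x_r\}$ is $\epsilon$-contained in an MF algebra, so Lemma 3.3.1 forces $(\mathcal A_1,\tau_1)*_{red}\cdots*_{red}(\mathcal A_n,\tau_n)$ to be an MF algebra.

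The only delicate point, and the one that needs the most care to write down cleanly, is the identification of the C$^*$-algebra generated by $\mathcal B^{(1)}_m,\ldots,\mathcal B^{(n)}_m$ inside the big reduced free product with the reduced free product of these subalgebras carrying the restricted traces. This rests on the faithfulness of $\tau_1*\cdots*\tau_n$, which guarantees that the GNS data for the restricted traces sit compatibly inside the GNS construction of the free product so that no collapsing occurs; it is the same mechanism already used (implicitly) in the proof of Theorem 3.3.1 and encoded in Remark 2.3.1. Everything else --- the polynomial approximation, the repeated use of Remark 2.3.1, and the $\epsilon$-bookkeeping --- is routine and identical in form to the arguments for Theorems 3.3.1--3.3.3, and no ingredient beyond Lemma 3.4.1 is required. (Alternatively one could first establish the two-factor case as above and then deduce the general case from it by the tensor-product trick of Theorem 3.3.3, using that a minimal tensor product of ASH algebras is again ASH; the direct argument, however, handles all $n$ at once with no extra work.)
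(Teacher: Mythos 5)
Your proof is correct and follows essentially the same route as the paper, which simply says to repeat the arguments of Theorems 3.3.2 and 3.3.3 with Lemma 3.4.1 in place of Lemma 3.3.3: approximate by sub-homogeneous subalgebras, embed them trace-preservingly into AF algebras, and invoke the AH/AF case together with Lemma 3.3.1 and Remarks 2.3.1 and 2.4.1. Your handling of all $n$ factors at once versus the paper's tensor-product reduction is only a minor organizational variation, which you yourself note.
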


\section{BDF Extension Semigroups and Reduced Free Products of AH
Algebras}

Recall the definition of quasidiagonal C$^*$-algebra  as follows.
  \begin{definition} A set of elements $\{a_1,\ldots,a_n\}\subseteq B(\mathcal H)$ is quasidiagonal
if there  is an increasing sequence of finite-rank projections
$\{p_i\}_{i=1}^\infty$ on $\mathcal H$ tending strongly to the
identity such that $\| a_j p_i-p_i a_j \|\rightarrow 0$ as
$i\rightarrow \infty$ for any $1\le j\le n.$ A  separable
C$^*$-algebra
 $\mathcal A\subseteq B(\mathcal H)$ is
quasidiagonal    if there  is an increasing sequence of finite-rank
projections $\{p_i\}_{i=1}^\infty$ on $\mathcal H$ tending strongly
to the identity such that $\| x p_i-p_i x \|\rightarrow 0$
as $i\rightarrow \infty$ for any $ x \in   \mathcal A.$     
 An abstract separable
C$^*$-algebra $\mathcal A$ is quasidiagonal if there is a faithful
$*$-representation  $\pi: \mathcal A\rightarrow B(\mathcal H)$ such
that $\pi(\mathcal A)\subseteq B(\mathcal H)$ is quasidiagonal.
\end{definition}

By a result of Rosenberg, we know  that $C_r^*(F_2)$ is not
quasidiagonal.

We will    use the fact that a  C$^*$-subalgebra of a separable
quasidiagonal C$^*$-algebra is also quasidiagonal. In other words, a
separable C$^*$-algebra, containing a non-quasidiagonal
C$^*$-subalgebra, is
  not quasidiagonal.

\subsection{Non-quasidiagonality of reduced free products of AH
algebras} In this subsection, we are going to discuss
quasidiagonality of reduced free products of unital C$^*$-algebras.
Some of the conclusions stated in this subsection are direct
consequences of  results from other literature.

The following result might have been known to experts. For the
purpose of completeness, we include it here.
\begin{theorem}
Suppose that $\mathcal A_1$  and $\mathcal A_2$  are unital
separable C$^*$-algebras with faithful tracial states $\tau_1$, and
$\tau_2$ respectively. If $\mathcal A_1$ and $\mathcal A_2$ satisfy
Avitzour's condition, {\em  i.e. there are unitaries $u\in \mathcal
A_1$ and $v,w\in\mathcal A_2$ such that $$
\tau_1(u)=\tau_2(v)=\tau_2(w)=\tau_2(w^*v)=0,
$$}{then
$$
 (\mathcal A_1,\tau_1)*_{red}(\mathcal A_2,\tau_2)
$$ is not a quasidiagonal C$^*$-algebra.
}
\end{theorem}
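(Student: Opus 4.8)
The plan is to use Avitzour's condition to produce, inside the reduced free product $\mathcal{M} = (\mathcal{A}_1,\tau_1)*_{red}(\mathcal{A}_2,\tau_2)$, a copy of a C$^*$-algebra that is already known not to be quasidiagonal, and then invoke the permanence property recorded just before the theorem (a C$^*$-subalgebra of a separable quasidiagonal C$^*$-algebra is quasidiagonal). The natural candidate is $C_r^*(F_2)$: we want unitaries $a, b \in \mathcal{M}$ that generate a free group on two generators with respect to the canonical faithful trace $\tau = \tau_1*\tau_2$, so that the C$^*$-subalgebra $C^*(a,b) \subseteq \mathcal{M}$ is $*$-isomorphic to $C_r^*(F_2)$; since $C_r^*(F_2)$ is not quasidiagonal by Rosenberg's theorem, $\mathcal{M}$ is not quasidiagonal.

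First I would set $a = u \in \mathcal{A}_1$, a Haar-type unitary with $\tau_1(u) = 0$. The second generator should be built from $v, w \in \mathcal{A}_2$; the standard trick (this is exactly Avitzour's computation) is to form an element like $b = u^* v^* u w$ or a similar alternating word, designed so that $b$ is a unitary and, more importantly, so that every nontrivial reduced word $a^{n_1} b^{m_1} a^{n_2} b^{m_2}\cdots$ in $a, b$ unwinds into a reduced word alternating between the "centered" parts of $\mathcal{A}_1$ and $\mathcal{A}_2$. Concretely, one checks that $\tau(v) = \tau(w) = \tau(u) = 0$ and, crucially, $\tau(w^*v) = 0$ means $v$ and $w$ are "in general position" inside $\stackrel{\circ}{\mathcal{H}}_2$, which is what lets the alternating cancellations never collapse a word to a scalar. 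The verification amounts to showing $\tau(x) = 0$ for every reduced word $x$ in $a$ and $b$ of the corresponding form; by freeness this reduces to checking that after substituting and rewriting, one always lands on an alternating product of elements of $\ker\tau_1|_{\mathcal{A}_1}$ and $\ker\tau_2|_{\mathcal{A}_2}$, using the hypotheses $\tau_1(u)=0$ and $\tau_2(v)=\tau_2(w)=\tau_2(w^*v)=0$ to guarantee that the boundary factors entering the product lie in the respective centered subspaces.

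Once one knows $\tau|_{C^*(a,b)}$ vanishes on all nontrivial reduced words, a standard argument identifies $(C^*(a,b), \tau|_{C^*(a,b)})$ with $(C_r^*(F_2), \tau_{F_2})$: the trace $\tau$ is faithful on $\mathcal{M}$ (since $\tau_1, \tau_2$ are faithful tracial states), hence faithful on $C^*(a,b)$, and a faithful trace on the C$^*$-algebra generated by two free Haar unitaries with the prescribed moments is the canonical trace, forcing the isomorphism with the reduced group C$^*$-algebra. Then $C_r^*(F_2) \hookrightarrow \mathcal{M}$, and since $C_r^*(F_2)$ is not quasidiagonal while quasidiagonality passes to C$^*$-subalgebras, $\mathcal{M} = (\mathcal{A}_1,\tau_1)*_{red}(\mathcal{A}_2,\tau_2)$ is not quasidiagonal.

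The main obstacle is the bookkeeping in the middle step: choosing the right word $b$ in $v, w, u$ and verifying rigorously that every reduced word in $a = u$ and $b$ reduces to an alternating product of centered elements, with no accidental cancellation producing a scalar summand. This is precisely where Avitzour's four moment conditions are used, and one must be careful about the cases where exponents of $a$ and $b$ are $\pm 1$ versus larger, and about how the unitary $u$ shared between $a$ and the definition of $b$ interacts at the seams of the word. I expect the cleanest route is to cite Avitzour's lemma (from \cite{Avit}) for the existence of such a pair of free Haar unitaries directly, and then only the short trace-faithfulness and subalgebra-quasidiagonality arguments need to be spelled out.
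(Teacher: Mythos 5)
Your overall strategy is exactly the paper's: use Avitzour's condition to locate two free Haar unitaries in $(\mathcal A_1,\tau_1)*_{red}(\mathcal A_2,\tau_2)$, conclude that $C_r^*(F_2)$ embeds, and finish with Rosenberg's theorem plus the fact that quasidiagonality passes to C$^*$-subalgebras. However, the one concrete choice you commit to --- taking the first generator to be $a=u$ itself --- does not work in general, and this is a genuine gap rather than mere bookkeeping. Avitzour's condition only gives $\tau_1(u)=0$; it does not make $u$ a Haar unitary. For example, if $\mathcal A_1$ contains a unitary $u$ with $u^2=1$ and $\tau_1(u)=0$ (as in $\mathbb C\oplus\mathbb C$ with the balanced trace), then $\tau(a^2)=\tau_1(u^2)=1\ne 0$, so $C^*(a)$ is not $C_r^*(\mathbb Z)$ and $C^*(a,b)$ cannot be $C_r^*(F_2)$ no matter how $b$ is chosen. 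The trace must vanish on \emph{all} nontrivial reduced words in the two generators, including pure powers of each, and your $a$ fails this at the very first check.

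The paper repairs exactly this point by taking both generators to be alternating words: $a=uvuv$ and $b=uwuw$. Every nonzero power of $a$ or of $b$, and every mixed reduced word in them, then unwinds (after cancellations such as those in $ab^*=uvu(vw^*)u^*w^*u^*$) into an alternating product of elements centered with respect to $\tau_1$ or $\tau_2$, where the seam terms $vw^*$, $w^*v$, $wv^*$, $v^*w$ are centered precisely because $\tau_2(w^*v)=0$. Freeness of $\mathcal A_1$ and $\mathcal A_2$ then forces $\tau$ to vanish on all these words, so $a$ and $b$ are free Haar unitaries and $C^*(a,b)\simeq C_r^*(F_2)$ (using faithfulness of $\tau_1*\tau_2$, as you correctly note). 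If you replace your choice of $a$ by such an alternating word --- or, as you suggest at the end, simply invoke Avitzour's lemma for the existence of the free Haar pair --- the rest of your argument goes through and coincides with the paper's proof.
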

\begin{proof}
   Let $a=uvuv$ and $b=uwuw$ be unitaries in $(\mathcal A_1,\tau_1)*_{red}(\mathcal
   A_2,\tau_2)$. Then we know that $a$ and $b$ are two Haar
   unitary elements in $(\mathcal A_1,\tau_1)*_{red}(\mathcal
   A_2,\tau_2)$ with respect
to the trace $\tau_1*\tau_2$. We note that
   $$
    \begin{aligned}
      &ab = uvuvuwuw \quad &ab^*=uvu(vw^*)u^*w^*u^* \quad
      &a^*b =v^*u^*(v^* w)uw \quad &a^*b^* =v^*u^*v^*u^*w^*
      u^*w^*u^*\\
      &ba  =uwuwuvuv \quad &ba^* = uwu(wv^*)u^*v^*u^*\quad &b^*a =
      w^*u^*(w^* v)uv \quad &b^*a^* = w^*u^*w^*u^*v^*u^*v^*u^*
    \end{aligned}
   $$
Now it is not hard to check that $a$ and $b$ are free with respect
to $\tau_1*\tau_2$. In other words, $C_r^*(F_2)$ is a
C$^*$-subalgebra of $(\mathcal A_1,\tau_1)*_{red}(\mathcal
A_2,\tau_2).$ Since $C_r^*(F_2)$ is not a quasidiagonal
C$^*$-algebra, $(\mathcal A_1,\tau_1)*_{red}(\mathcal A_2,\tau_2)$
is not a quasidiagonal C$^*$-algebra.
\end{proof}

The following result of N. Brown (see Corollary 4.3.6 in \cite{Br2})
is also useful in determining the quasidiagonality of a unital
C$^*$-algebra.
\begin{lemma} [Brown]
Suppose that $\mathcal A$ is a unital, separable, exact
C$^*$-algebra with a unique trace $\tau$. Let $\rho:\mathcal
A\rightarrow B(L^2(\mathcal A,\tau))$ be the GNS representation of
$\mathcal A$ on the Hilbert space $L^2(\mathcal A,\tau)$. If
$\mathcal A$ is quasidiagonal, then $\rho(\mathcal A)''$, the von
Neumann algebra generated by $\rho(\mathcal A)$ in $B(L^2(\mathcal
A,\tau))$, is a hyperfinite von Neumann algebra.
\end{lemma}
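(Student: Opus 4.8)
The aim is to show that $M:=\rho(\mathcal A)''$, a finite von Neumann algebra acting on the separable Hilbert space $L^2(\mathcal A,\tau)$, is injective; since $M$ has separable predual, Connes' theorem will then give that $M$ is hyperfinite. The plan is to manufacture finite-dimensional matricial approximations of $\mathcal A$ out of quasidiagonality, to use the uniqueness of $\tau$ to force these approximations to reproduce precisely the trace $\tau$, thereby obtaining a trace-preserving embedding of $M$ into a tracial ultraproduct of matrix algebras, and finally to transfer injectivity from that (injective) ultraproduct to $M$.

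For the first step I would use quasidiagonality in its concrete form: fix a faithful quasidiagonal $*$-representation of $\mathcal A$ on a Hilbert space $\mathcal H$ together with an increasing sequence of finite-rank projections $\{p_k\}$ tending strongly to the identity with $\|[p_k,a]\|\to 0$ for every $a\in\mathcal A$. Writing $B(p_k\mathcal H)\cong\mathcal M_{n_k}(\Bbb C)$ with its normalized trace $\tau_{n_k}$, the compressions $\phi_k(a)=p_k a p_k$ are unital completely positive (u.c.p.) maps $\phi_k:\mathcal A\to\mathcal M_{n_k}(\Bbb C)$ with $\|\phi_k(ab)-\phi_k(a)\phi_k(b)\|=\|p_k a(1-p_k)b p_k\|\to 0$ and $\|\phi_k(a)\|=\|p_k a p_k\|\to\|a\|$ for all $a,b\in\mathcal A$, so they are asymptotically multiplicative in norm and asymptotically isometric.

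Next, fix a free ultrafilter $\omega$ on $\Bbb N$ and form the tracial ultraproduct von Neumann algebra $\mathcal N=\prod_{\omega}(\mathcal M_{n_k}(\Bbb C),\tau_{n_k})$, with its canonical faithful normal trace $\tau_\omega$. Because the $\phi_k$ are asymptotically multiplicative \emph{in norm}, $\Phi(a):=(\phi_k(a))_k$ is a genuine unital $*$-homomorphism $\Phi:\mathcal A\to\mathcal N$. The state $\tau_\omega\circ\Phi$ on $\mathcal A$ is tracial --- indeed $|\tau_{n_k}(\phi_k(ab))-\tau_{n_k}(\phi_k(ba))|\le\|\phi_k(ab)-\phi_k(a)\phi_k(b)\|+\|\phi_k(b)\phi_k(a)-\phi_k(ba)\|\to 0$, using that $\tau_{n_k}$ is itself a trace on $\mathcal M_{n_k}(\Bbb C)$ --- so by the \emph{uniqueness} of the trace on $\mathcal A$ we get $\tau_\omega\circ\Phi=\tau$. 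Hence $\|\Phi(a)\|_{2,\tau_\omega}=\tau(a^*a)^{1/2}$; in particular $\Phi$ is faithful, and being trace-preserving it extends, via $\|\cdot\|_2$-density of the unit ball of $\rho(\mathcal A)$ in that of $M$, to a normal, unital, trace-preserving embedding $\overline\Phi:M\hookrightarrow\mathcal N$.

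The final step, and the one I expect to be the main obstacle, is to transfer injectivity across this embedding. The tracial ultraproduct $\mathcal N$ of the matrix algebras is injective (for $n_k\to\infty$ it is isomorphic to $R^\omega$, and quite generally a tracial ultraproduct of injective von Neumann algebras is injective); since $\overline\Phi$ is trace-preserving there is a trace-preserving conditional expectation $E:\mathcal N\to\overline\Phi(M)$, namely the map induced by the orthogonal projection of $L^2(\mathcal N,\tau_\omega)$ onto the $L^2$-space of $\overline\Phi(M)$. Composing a u.c.p. conditional expectation $B(\mathcal K)\to\mathcal N$, which exists since $\mathcal N$ is injective, with $E$ exhibits $\overline\Phi(M)$ as the image of a u.c.p. idempotent on $B(\mathcal K)$, so $M\cong\overline\Phi(M)$ is injective and therefore hyperfinite by Connes' theorem. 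The delicate points are the injectivity of $\mathcal N$ and the legitimacy of the normal extension $\overline\Phi$ and of the conditional expectation $E$, all of which rest on the structure theory of finite von Neumann algebras (and, in Brown's original treatment, on the exactness of $\mathcal A$); and it should be stressed that the uniqueness of $\tau$ is used essentially, since without it the construction would only produce $\pi_{\tau'}(\mathcal A)''$ for some trace $\tau'$ selected by $\omega$, rather than $\rho(\mathcal A)''$ itself.
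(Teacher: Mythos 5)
Your steps (1)--(3) are fine and are indeed the standard first half of Brown's argument: the compressions $\phi_k(a)=p_ka p_k$ are u.c.p., asymptotically multiplicative in norm, and the uniqueness of $\tau$ forces $\tau_\omega\circ\Phi=\tau$; this shows precisely that $\tau$ is an amenable (in fact quasidiagonal) trace. The fatal problem is the last step. The tracial ultraproduct $\mathcal N=\prod_\omega(\mathcal M_{n_k},\tau_{n_k})$ is \emph{not} injective, and the parenthetical claim that a tracial ultraproduct of injective von Neumann algebras is injective is false: $R^\omega$ is the standard counterexample. Indeed, by Voiculescu's asymptotic freeness $L(F_2)$ embeds trace-preservingly into $\prod_\omega \mathcal M_{n_k}$, and since any von Neumann subalgebra of a finite von Neumann algebra admits a trace-preserving conditional expectation, injectivity of $\mathcal N$ would make $L(F_2)$ injective. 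More bluntly, your transfer argument, if valid, would prove that \emph{every} von Neumann algebra admitting a trace-preserving embedding into $R^\omega$ is hyperfinite, i.e.\ that every Connes-embeddable II$_1$ factor is hyperfinite --- which $L(F_2)$ refutes. So the embedding $\overline\Phi:M\hookrightarrow\mathcal N$, which you do obtain, carries no injectivity information by itself.

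The missing ingredient is exactly the one you relegate to a parenthesis: exactness (more precisely, local reflexivity). Brown's route is to prove that for a locally reflexive C$^*$-algebra, every amenable trace has injective GNS closure (Theorem 4.3.3 / Corollary 4.3.6 of his memoir, building on Connes and Kirchberg); the local reflexivity is what lets one convert the norm-asymptotically multiplicative finite-dimensional approximations of $\mathcal A$ into weak-$*$ approximations of the identity of $M=\pi_\tau(\mathcal A)''$ factoring through matrix algebras, which is the actual criterion for injectivity. This step is not formal and cannot be replaced by the conditional-expectation argument you propose. Note also that the paper itself does not prove this lemma --- it simply cites Brown --- so there is no internal proof to compare against; but as written your argument does not establish the statement.
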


\begin{proposition}
Suppose that $C(\Bbb T)$ is the unital C$^*$-algebra consisting all
continuous functions on the unit circle $\Bbb T$ and  $\tau$ is a
faithful trace of $C(\Bbb T)$ induced by the Lesbeague measure on
$\Bbb T$. Suppose that $\mathcal B\ne \Bbb C$ is a unital,
separable,   C$^*$-algebra with a faithful tracial state $\psi$.
Then
$$
(C(\Bbb T),\tau)*_{red}(\mathcal B,\psi)
$$ is not a quasidiagonal C$^*$-algebra.
\end{proposition}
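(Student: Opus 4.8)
The plan is to derive the non-quasidiagonality of $\mathcal A:=(C(\Bbb T),\tau)*_{red}(\mathcal B,\psi)$ from Brown's Lemma 4.1.1, after first replacing $\mathcal B$ by a commutative unital subalgebra. Since $\mathcal B\ne\Bbb C$, there is a self-adjoint $b\in\mathcal B$ which is not a scalar multiple of the identity, so $\mathcal B_0:=C^*(1,b)\cong C(K)$ with $K=\operatorname{sp}(b)$ a compact subset of $\Bbb R$ having at least two points, and $\psi|_{\mathcal B_0}$ is a faithful state, i.e. a full-support Borel probability measure $\mu$ on $K$. By the embedding statement of the Remark in Section 2.3, $\mathcal C:=(C(\Bbb T),\tau)*_{red}(C(K),\mu)$ is a unital C$^*$-subalgebra of $\mathcal A$; since a C$^*$-subalgebra of a separable quasidiagonal C$^*$-algebra is again quasidiagonal, it is enough to show that $\mathcal C$ is not quasidiagonal.

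First I would check the hypotheses of Brown's lemma for $\mathcal C$. Both $C(\Bbb T)$ and $C(K)$ are commutative, hence nuclear, and a reduced free product of exact (in particular nuclear) C$^*$-algebras is exact, so $\mathcal C$ is unital, separable and exact. Next, $\mathcal C$ has a unique tracial state, namely $\tau*\mu$: the factor $(C(\Bbb T),\tau)$ contains the Haar unitary $z$ (the coordinate function, with $\tau(z^n)=0$ for all $n\ne 0$) sitting in the $\tau$-centralizer, and $C(K)\ne\Bbb C$, so by the simplicity/uniqueness results for reduced free products of Avitzour and Dykema, $\mathcal C$ is simple with unique trace. This is the step where it matters that one factor is $(C(\Bbb T),\tau)$ rather than merely an algebra satisfying Avitzour's condition: $C(K)$ itself need not contain any trace-zero unitary (take $K$ a two-point set with unequal weights), so the Haar unitary has to be supplied by the $C(\Bbb T)$ side.

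Then I would analyze the von Neumann algebra. Let $\rho$ be the GNS representation of $\mathcal C$ on $L^2(\mathcal C,\tau*\mu)$; because $\tau*\mu$ is faithful and the distinguished vacuum vector is cyclic for $\mathcal C$, $\rho$ is unitarily equivalent to the defining representation of the reduced free product, so $M:=\rho(\mathcal C)''$ is the von Neumann algebra free product $L^\infty(\Bbb T,\mathrm{Leb})\,\overline{*}\,L^\infty(K,\mu)$, in which the first factor is diffuse and the second is a nontrivial abelian von Neumann algebra. If $L^\infty(K,\mu)$ is diffuse, then $M$ contains $L^\infty(\Bbb T)\,\overline{*}\,L^\infty[0,1]\cong L(F_2)$ by Voiculescu's computation; if $L^\infty(K,\mu)$ has a minimal projection, then $M$ contains $L^\infty(\Bbb T)\,\overline{*}\,\Bbb C^2$, which by Dykema's free-dimension calculation is an interpolated free group factor $L(F_t)$ with $t>1$. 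In either case $M$ has a non-amenable subalgebra, hence $M$ is not hyperfinite.

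Putting this together: $\mathcal C$ is a unital, separable, exact C$^*$-algebra with a unique trace whose trace-GNS von Neumann algebra is not hyperfinite, so by the contrapositive of Brown's Lemma 4.1.1 it is not quasidiagonal; therefore $\mathcal A$ contains a non-quasidiagonal C$^*$-subalgebra and is not quasidiagonal. I expect the delicate part to be the two structural inputs of the middle steps — uniqueness of the trace on $\mathcal C$ and non-hyperfiniteness of $M$ — especially in the borderline case where the only small commutative subalgebra one can pull out of $\mathcal B$ is $\Bbb C^2$ with unequal weights; making both go through is exactly where the diffuseness of $(C(\Bbb T),\tau)$ is used.
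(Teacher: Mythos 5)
Your proposal is correct and follows the same overall skeleton as the paper's proof: cut down to a small exact (here commutative) subalgebra of $\mathcal B$, invoke Dykema's theorems for simplicity/uniqueness of the trace and for exactness of the reduced free product, and then run Brown's Lemma 4.1.1 in the contrapositive. The one genuine divergence is the final step, where you must show the trace-GNS von Neumann algebra is not hyperfinite. The paper does this with Voiculescu's modified free entropy dimension: for the Haar unitary $u\in C(\Bbb T)$ and a nontrivial unitary $v\in\mathcal B$, freeness gives $\delta_0(\rho(u),\rho(v))=1+\delta_0(\rho(v))>1$, while hyperfiniteness would force $\delta_0\le 1$. You instead identify $\rho(\mathcal C)''$ as the von Neumann algebra free product $L^\infty(\Bbb T)\,\overline{*}\,L^\infty(K,\mu)$ and appeal to Dykema's classification of such free products as interpolated free group factors $L(F_t)$ with $t>1$ (or as containing $L(F_2)$ in the diffuse case), hence non-amenable. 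Both routes are legitimate and both ultimately rest on free probability; the paper's $\delta_0$ argument is more robust in that it needs only additivity of $\delta_0$ under freeness plus the upper bound for hyperfinite algebras, and it avoids having to verify that no atomic direct summand survives in the free product, whereas your route gives finer structural information (an explicit isomorphism type for $M$) at the cost of invoking the heavier classification machinery. One small remark: since $L^\infty(K,\mu)\ne\Bbb C$ always contains a copy of $\Bbb C^2$, your case split is unnecessary --- the $\Bbb C^2$ argument already covers everything, because $A$ and any von Neumann subalgebra of $B$ remain free, and $L^\infty(\Bbb T)\,\overline{*}\,\Bbb C^2_{(\alpha,1-\alpha)}\cong L(F_t)$ with $t=2-\alpha^2-(1-\alpha)^2>1$ has no atomic part precisely because the first factor is diffuse; it is worth saying this explicitly since it is the only place a summand could conceivably appear.
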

\begin{proof}We might assume that $\mathcal B$ is an exact
C$^*$-algebra. In fact, let $1\ne v$ be a unitary in $\mathcal B$
and $I_{\mathcal B}\in \mathcal B_1$ be a unital C$^*$-subalgebra of
$\mathcal B$ generated by $v$ in $\mathcal B$. Since $ (C(\Bbb
T),\tau)*_{red}(\mathcal B_1,\psi) $ is a C$^*$-subalgebra of $
(C(\Bbb T),\tau)*_{red}(\mathcal B,\psi) $, to show $ (C(\Bbb
T),\tau)*_{red}(\mathcal B,\psi) $ is not quasidiagonal  it suffices
to show that $ (C(\Bbb T),\tau)*_{red}(\mathcal B_1,\psi) $ is not
quasidiagonal. Apparently $\mathcal B_1\ne \Bbb C$ is a unital exact
C$^*$-algebra with a faithful trace $\psi$.

 By Dykema's result in Theorem 2 of  \cite{Dyk}, we
know that $ (C(\Bbb T),\tau)*_{red}(\mathcal B,\psi) $ is a simple
C$^*$-algebra with a unique tracial state $\tau*\psi$. Also by his
result in Theorem 3.5 of \cite{Dyk2}, we know that $ (C(\Bbb
T),\tau)*_{red}(\mathcal B,\psi) $ is an exact C$^*$-algebra.

Let $\rho$ be the GNS  representation of $ (C(\Bbb
T),\tau)*_{red}(\mathcal B,\psi) $ on the Hilbert space $L^2(
(C(\Bbb T),\tau)*_{red}(\mathcal B,\psi) ,\tau*\psi)$. Assume that $
(C(\Bbb T),\tau)*_{red}(\mathcal B,\psi) $ is a quasidiagonal
C$^*$-algebra. Then by Lemma 4.1.1, we know that $\rho( (C(\Bbb
T),\tau)*_{red}(\mathcal B,\psi))''$ is a hyperfinite von Neumann
algebra. Let $u$ be a Haar unitary in $C(\Bbb T)$ with respect to
$\tau$ and $v\ne 1$ be a  unitary in $\mathcal B$. Then by
Voiculescu's result in \cite{V2}, we know that
$$
\delta_0(\rho(u),\rho(v))=\delta_0(\rho(u))+\delta_0(\rho(v))=1+\delta_0(\rho(v))>1,
$$ where $\delta_0$ is the modified free entropy dimension for
finite von Neumann algebras. On the other hand, since  $\rho(
(C(\Bbb T),\tau)*_{red}(\mathcal B,\psi))''$ is a hyperfinite von
Neumann algebra, by \cite{V3} or \cite{HaSh}, we know
$$
\delta_0(\rho(u),\rho(v))\le 1.
$$ This is the
contradiction. Hence $\rho( (C(\Bbb T),\tau)*_{red}(\mathcal
B,\psi))''$ is not a hyperfinite von Neumann algebra. It follows
that $ (C(\Bbb T),\tau)*_{red}(\mathcal B,\psi) $  is not a
quasidiagonal C$^*$-algebra.
\end{proof}

The following useful result was obtained by Dykema in Proposition
2.8 of \cite{Dyk}.
\begin{lemma}[Dykema]
Let $\mathcal A = \mathcal A_1\oplus \mathcal A_2$ be a direct sum
of unital C$^*$-algebras. Write $p = 1\oplus 0 \in\mathcal A$ and
let $\phi_{\mathcal A}$ be a state on A, such that $0 <
\alpha=\phi_{\mathcal A}(p) < 1.$ Let $\mathcal B$ be a unital
C$^*$-algebra with a state $\phi_{\mathcal B}$ and let $$ (\mathcal
D,\phi)=(\mathcal A,\phi_{\mathcal A})*_{red}  (\mathcal
B,\phi_{\mathcal B})
$$ Let $\mathcal D_1$ be the C$^*$-subalgebra of $\mathcal D$ generated by $\Bbb Cp+ (0 \oplus\mathcal  A_2)\subseteq \mathcal
A$ together with $\mathcal B$.   Then $p\mathcal D p$ is generated
by $p\mathcal D_1p$ and $\mathcal A_1\oplus 0\subseteq \mathcal A$,
which are free in $(p\mathcal Dp, \frac 1 \alpha \phi|_{p\mathcal
Dp} ),$ i.e.
$$
(p\mathcal D_1 p, \frac 1 \alpha \phi|_{p\mathcal D_1 p})*_{red}
(\mathcal A_1, \frac 1 \alpha\phi_{\mathcal A}|_{\mathcal A_1})
 \simeq(p\mathcal Dp, \frac 1 \alpha \phi|_{p\mathcal Dp} )\subseteq  \mathcal D.
$$
\end{lemma}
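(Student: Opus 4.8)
The plan is to verify the statement directly at the level of moments, reducing everything to the freeness of $\mathcal{A}$ and $\mathcal{B}$ inside $(\mathcal{D},\phi)$ that is built into the reduced free product. Throughout, write $\stackrel{\circ}{(\cdot)}$ for the kernel of the ambient state, set $\tilde{\mathcal{A}}=\mathbb{C}p+(0\oplus\mathcal{A}_2)$ and $\psi=\tfrac1\alpha\phi|_{p\mathcal{D}p}$. Since $\tilde{\mathcal{A}}$ is a unital $C^*$-subalgebra of $\mathcal{A}$ (its unit is $1_{\mathcal{A}}=p+(0\oplus1)$), the embedding remark in Section 2.3 gives $\mathcal{D}_1\simeq(\tilde{\mathcal{A}},\phi_{\mathcal{A}}|_{\tilde{\mathcal{A}}})*_{red}(\mathcal{B},\phi_{\mathcal{B}})$ inside $\mathcal{D}$, and $\psi$ is a state with $\psi(p)=1$ whose restriction to $p\mathcal{A}p=\mathcal{A}_1\oplus0\cong\mathcal{A}_1$ is $\tfrac1\alpha\phi_{\mathcal{A}}|_{\mathcal{A}_1}$. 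I would first record three elementary facts: (i) $p$ is central in $\mathcal{A}$, so $\mathcal{A}=p\mathcal{A}p\oplus(1-p)\mathcal{A}(1-p)$ with $(1-p)\mathcal{A}(1-p)=0\oplus\mathcal{A}_2\subseteq\tilde{\mathcal{A}}$; (ii) $p\tilde{\mathcal{A}}=\tilde{\mathcal{A}}p=\mathbb{C}p$, since $pt=\lambda p=tp$ for $t=\lambda p+(0\oplus a_2)$; (iii) $\stackrel{\circ}{\mathcal{A}}=\stackrel{\circ}{(p\mathcal{A}p)}\oplus\stackrel{\circ}{\tilde{\mathcal{A}}}$ as vector spaces, via $c\mapsto\bigl(pcp-\tfrac1\alpha\phi_{\mathcal{A}}(pcp)\,p\bigr)\oplus\bigl((1-p)c(1-p)+\tfrac1\alpha\phi_{\mathcal{A}}(pcp)\,p\bigr)$.

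For the generation statement, recall that $p\mathcal{D}p$ is the closed linear span of $p$ together with the compressions $pwp$ of alternating words $w$ in $\stackrel{\circ}{\mathcal{A}}$ and $\stackrel{\circ}{\mathcal{B}}$. Using (iii) to split each $\stackrel{\circ}{\mathcal{A}}$-letter into a $\stackrel{\circ}{(p\mathcal{A}p)}$-part and a $\stackrel{\circ}{\tilde{\mathcal{A}}}$-part, and noting $\stackrel{\circ}{\tilde{\mathcal{A}}},\stackrel{\circ}{\mathcal{B}}\subseteq\mathcal{D}_1$, each $pwp$ becomes a finite sum of products $p\,d_0\,e_1\,d_1\,e_2\cdots d_r\,p$ with $e_i\in p\mathcal{A}p$ and $d_i\in\mathcal{D}_1$. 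Since $e_i=pe_i=e_ip$, every factor $d_i$ is flanked on each side by some $e_j$ or by a boundary $p$, so one can insert $p\,(\cdot)\,p$ around each $d_i$ (using identities such as $e_id_ie_{i+1}=e_i(pd_ip)e_{i+1}$, which follow from $(1-p)p=0$), exhibiting the product as a product of elements of $p\mathcal{A}p$ and $p\mathcal{D}_1p$. Hence $p\mathcal{D}p=C^*(p\mathcal{A}p,\,p\mathcal{D}_1p)$.

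The core of the argument is freeness with respect to $\psi$. By (ii), compressing an alternating word in $\stackrel{\circ}{\tilde{\mathcal{A}}},\stackrel{\circ}{\mathcal{B}}$ by $p$ absorbs any $\stackrel{\circ}{\tilde{\mathcal{A}}}$-letters at its ends into scalar multiples of $p$, so $p\mathcal{D}_1p$ is the closed span of $p$ and of words $p\,s_1t_1s_2\cdots t_{k-1}s_k\,p$ with $s_j\in\stackrel{\circ}{\mathcal{B}}$, $t_j\in\stackrel{\circ}{\tilde{\mathcal{A}}}\subseteq\stackrel{\circ}{\mathcal{A}}$, $k\ge1$; substituting $p=\alpha1+(p-\alpha1)$ with $p-\alpha1\in\stackrel{\circ}{\mathcal{A}}$ shows $\psi$ vanishes on each such word (it becomes a sum of values of $\phi$ on nonempty alternating words of centered elements), so the $\psi$-centered elements of $p\mathcal{D}_1p$ all lie in their closed span. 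Now take an alternating product $y_1\cdots y_n$ whose factors alternate between $\stackrel{\circ}{(p\mathcal{A}p)}$ and that span; by linearity and continuity one may take each $\mathcal{D}_1$-type factor to be a single word $p\,s_1^{(i)}t_1^{(i)}\cdots s_{k_i}^{(i)}\,p$. Because each $p\mathcal{A}p$-type factor equals its own compression by $p$, multiplying it against a neighbouring $\mathcal{D}_1$-type factor swallows that neighbour's adjacent boundary $p$; telescoping leaves $y_1\cdots y_n=p^{\varepsilon_1}\,W\,p^{\varepsilon_2}$, where $\varepsilon_j\in\{0,1\}$ equals $1$ exactly when the end factor on that side is of $\mathcal{D}_1$-type, and $W$ is the concatenation of the ``cores'' — a nonempty, properly alternating word in $\stackrel{\circ}{\mathcal{A}},\stackrel{\circ}{\mathcal{B}}$ that begins (resp.\ ends) with a $\stackrel{\circ}{\mathcal{B}}$-letter whenever the corresponding $\varepsilon_j=1$. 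Expanding $p=\alpha1+(p-\alpha1)$ once more, $\phi(y_1\cdots y_n)$ is a finite combination of values of $\phi$ on nonempty alternating words of centered elements of $\mathcal{A},\mathcal{B}$, hence $0$ by freeness in $(\mathcal{D},\phi)$; therefore $\psi(y_1\cdots y_n)=0$. Together with the generation statement and the fact that $\psi$ is the vector state of the GNS/Fock realization of $\mathcal{D}$, this identifies $(p\mathcal{D}p,\psi)$ with $(p\mathcal{D}_1p,\psi|)*_{red}(\mathcal{A}_1,\tfrac1\alpha\phi_{\mathcal{A}}|_{\mathcal{A}_1})$.

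The step I expect to cause the most trouble is this last combinatorial bookkeeping: pinning down exactly which $\stackrel{\circ}{\tilde{\mathcal{A}}}$-letters and boundary $p$'s survive the telescoping, and checking that the surviving word $W$ is genuinely alternating and starts/ends with a $\stackrel{\circ}{\mathcal{B}}$-letter wherever a $p$ remains, so that freeness of $\mathcal{A}$ and $\mathcal{B}$ in $(\mathcal{D},\phi)$ really applies. One can instead carry out the same bookkeeping on the free-product Hilbert space $(\mathcal{H}_1,\xi_1)*(\mathcal{H}_2,\xi_2)$ using the range projection of $\lambda_1(p)$; in any case the lemma is quoted from Dykema's Proposition~2.8 in \cite{Dyk}.
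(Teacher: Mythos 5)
The paper does not prove this lemma at all: it simply quotes it as Proposition~2.8 of Dykema's paper \cite{Dyk}, so your proposal is being compared against a citation rather than an argument. What you have written is essentially a correct reconstruction of Dykema's own moment-level proof. Your three preliminary facts check out: the decomposition $\stackrel{\circ}{\mathcal A}=\stackrel{\circ}{(p\mathcal Ap)}\oplus\stackrel{\circ}{\tilde{\mathcal A}}$ is a genuine direct sum (the intersection is $\mathbb{C}p$ intersected with $\ker\phi_{\mathcal A}$, which is $0$ since $\phi_{\mathcal A}(p)=\alpha\neq 0$), and the explicit splitting $c\mapsto\bigl(pcp-\tfrac1\alpha\phi_{\mathcal A}(pcp)p\bigr)\oplus\bigl((1-p)c(1-p)+\tfrac1\alpha\phi_{\mathcal A}(pcp)p\bigr)$ lands in the right summands. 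The generation argument (split each $\stackrel{\circ}{\mathcal A}$-letter, regroup, and insert $p(\cdot)p$ around each $\mathcal D_1$-block using $e=pe=ep$) is sound, and the freeness computation is correct: the absorption $p\tilde{\mathcal A}p=\mathbb{C}p$ reduces centered elements of $p\mathcal D_1p$ to spans of words $ps_1t_1\cdots s_kp$ beginning and ending in $\stackrel{\circ}{\mathcal B}$-letters, the telescoping against $p\mathcal Ap$-factors produces a properly alternating word $W$ in $\stackrel{\circ}{\mathcal A}\cup\stackrel{\circ}{\mathcal B}$ with at most boundary $p$'s surviving, and expanding $p=\alpha 1+(p-\alpha 1)$ lets the freeness of $\mathcal A$ and $\mathcal B$ in $(\mathcal D,\phi)$ kill every term. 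What your self-contained proof buys over the paper's citation is transparency about exactly which structural features ($p$ central in $\mathcal A$, $p\tilde{\mathcal A}p=\mathbb{C}p$) drive the result.

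One point deserves more care than your closing sentence gives it: freeness of the two subalgebras with respect to $\psi$ plus the fact that they generate $p\mathcal Dp$ only yields a surjection from the full free product whose composition with the GNS representation of $\psi$ recovers the reduced free product. To conclude the stated isomorphism $p\mathcal Dp\simeq(p\mathcal D_1p,\psi|)*_{red}(\mathcal A_1,\tfrac1\alpha\phi_{\mathcal A}|_{\mathcal A_1})$ you need the GNS representation of $\psi$ on $p\mathcal Dp$ to be faithful. In the generality of the statement (arbitrary states) this is an implicit standing hypothesis carried over from Dykema's paper; in the present paper's applications $\phi_{\mathcal A}$ and $\phi_{\mathcal B}$ are faithful traces, so $\phi=\phi_{\mathcal A}*\phi_{\mathcal B}$ and hence $\psi$ are faithful and the identification goes through. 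You should state this faithfulness step explicitly rather than appealing to ``$\psi$ is the vector state of the GNS/Fock realization,'' which by itself is not the relevant property.
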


\begin{proposition}
Let $C(\Bbb T)$ be the unital C$^*$-algebra consisting all
continuous functions on the unit circle $\Bbb T$ and  $\tau$   a
faithful trace of $C(\Bbb T)$ induced by the Lesbeague measure on
$\Bbb T$. Let $\mathcal A_2$ and $\mathcal B\ne \Bbb C$ be unital
separable
   C$^*$-algebras with faithful traces $\tau_2$, and $\psi$
respectively. Let $\mathcal A=C(\Bbb T)\oplus \mathcal A_2$ with a
faithful trace $\phi$ given by $\phi=\alpha \tau+(1-\alpha)\tau_2$
for some $0<\alpha<1$. Then
$$
(\mathcal A,\phi)*_{red}(\mathcal B,\psi)
$$ is not a quasidiagonal C$^*$-algebra.
\end{proposition}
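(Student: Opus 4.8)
The plan is to reduce to Proposition 4.1.1 by cutting $\mathcal{D}:=(\mathcal{A},\phi)*_{red}(\mathcal{B},\psi)$ down by the obvious projection. Set $p=1\oplus 0\in\mathcal{A}=C(\mathbb{T})\oplus\mathcal{A}_2$, so that $\phi(p)=\alpha\in(0,1)$ and the restriction of $\phi$ to the $C(\mathbb{T})$ summand is $\alpha\tau$. Applying Dykema's Lemma (Lemma 4.1.2) with $\mathcal{A}_1=C(\mathbb{T})$ and $\mathcal{A}_2$ as given, and letting $\mathcal{D}_1\subseteq\mathcal{D}$ be the C$^*$-subalgebra generated by $\mathbb{C}p+(0\oplus\mathcal{A}_2)$ together with $\mathcal{B}$, one obtains a $*$-isomorphism
\[
\big(p\mathcal{D}p,\tfrac1\alpha\phi|_{p\mathcal{D}p}\big)\;\simeq\;\big(p\mathcal{D}_1p,\tfrac1\alpha\phi|_{p\mathcal{D}_1p}\big)*_{red}\big(C(\mathbb{T}),\tau\big),
\]
the key book-keeping point being that the rescaled restriction $\tfrac1\alpha\phi|_{C(\mathbb{T})\oplus 0}$ is exactly the Lebesgue trace $\tau$. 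Via this isomorphism $p\mathcal{D}p$ is a unital C$^*$-subalgebra of $\mathcal{D}$ (with unit $p$), and $\tfrac1\alpha\phi|_{p\mathcal{D}p}$ is a faithful tracial state, since $\phi=\phi_{\mathcal{A}}*\psi$ is a faithful trace on $\mathcal{D}$.

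The only step requiring a computation is to check that $p\mathcal{D}_1 p\neq\mathbb{C}p$, so that Proposition 4.1.1 applies with $p\mathcal{D}_1 p$ in the role of "$\mathcal{B}$". Since $\mathcal{B}\neq\mathbb{C}$, I would pick a non-scalar self-adjoint $b\in\mathcal{B}$; subtracting $\psi(b)1$ we may assume $\psi(b)=0$, and then $\beta:=\psi(b^2)>0$ by faithfulness of $\psi$. Let $\tilde b$ denote the image of $b$ under the canonical unital embedding $\mathcal{B}\hookrightarrow\mathcal{D}$; then $\tilde b\in\mathcal{D}_1$, hence $p\tilde b p\in p\mathcal{D}_1 p$. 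Writing $p=\alpha 1+q$ with $q:=p-\alpha 1$ centered and free from $\tilde b$ in $(\mathcal{D},\phi)$, and using that $\phi$ annihilates every alternating word in centered elements of (the images of) $\mathcal{A}$ and of $\mathcal{B}$, a short expansion gives $\phi(p\tilde b p)=0$ while $\phi\big((p\tilde b p)^2\big)=\phi(p\tilde b p\tilde b p)=\alpha^2\beta>0$. Thus $p\tilde b p$ is a nonzero self-adjoint element of $p\mathcal{D}_1 p$ of trace $0$, so it is not a scalar multiple of $p$; therefore $p\mathcal{D}_1 p\supsetneq\mathbb{C}p$.

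It then remains to assemble the pieces. By Proposition 4.1.1 — with $C(\mathbb{T})$ and its Lebesgue trace as one factor and the unital separable C$^*$-algebra $p\mathcal{D}_1 p\neq\mathbb{C}$ with faithful trace $\tfrac1\alpha\phi|$ as the other — the reduced free product $\big(C(\mathbb{T}),\tau\big)*_{red}\big(p\mathcal{D}_1 p,\tfrac1\alpha\phi|\big)$ is not quasidiagonal; by the displayed isomorphism, neither is $p\mathcal{D}p$. Since a C$^*$-subalgebra of a separable quasidiagonal C$^*$-algebra is again quasidiagonal and $p\mathcal{D}p$ is a C$^*$-subalgebra of $\mathcal{D}$, it follows that $\mathcal{D}=(\mathcal{A},\phi)*_{red}(\mathcal{B},\psi)$ is not quasidiagonal.

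I expect the genuine (though minor) obstacle to be the state book-keeping in the application of Dykema's Lemma 4.1.2: verifying that the corner $p\mathcal{D}p$ with its rescaled state really decomposes as a reduced free product in which the $C(\mathbb{T})$ factor carries the Lebesgue trace, so that Proposition 4.1.1 applies verbatim — together with the short freeness computation showing $p\mathcal{D}_1 p\neq\mathbb{C}p$. Everything past that is a direct invocation of Lemma 4.1.2, Proposition 4.1.1, and the permanence of quasidiagonality under passage to C$^*$-subalgebras.
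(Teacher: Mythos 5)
Your proof is correct and follows essentially the same route as the paper: apply Dykema's Lemma 4.1.2 with $p=1\oplus 0$ to identify $\bigl(p\mathcal Dp,\tfrac1\alpha\phi|_{p\mathcal Dp}\bigr)$ with $\bigl(p\mathcal D_1p,\tfrac1\alpha\phi|_{p\mathcal D_1p}\bigr)*_{red}(C(\Bbb T),\tau)$, invoke Proposition 4.1.1, and use heredity of quasidiagonality. The only difference is that you explicitly verify $p\mathcal D_1p\ne\Bbb Cp$ via the moment computation $\phi(p\tilde bp)=0$, $\phi\bigl((p\tilde bp)^2\bigr)=\alpha^2\beta>0$, a point the paper asserts without proof; your computation is correct.
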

\begin{proof}
Let  $ (\mathcal D,\phi*\psi)\simeq (\mathcal
A,\phi)*_{red}(\mathcal B,\psi). $  By Lemma 4.1.2, there is a
unital C$^*$-subalgebra $\mathcal D_2\ne \Bbb C$ in $\mathcal D $,
such that $(C(\Bbb T),\tau)*_{red} (\mathcal D_2 ,\frac 1
{(\phi*\psi)(I_{\mathcal D_2})}(\phi*\psi)|_{\mathcal D_2})$ can be
embedded (not necessary to be unital) into $ (\mathcal
A,\phi)*_{red}( \mathcal B ,\psi) $. Combining with Proposition
4.1.1, we completed the proof.
\end{proof}

Recall a unital C$^*$-algebra $\mathcal A$ with a faithful trace
$\phi$ is diffuse if there is a unitary $u$ such that $\phi(u^n)=0$
for all $n\ne 0$, i.e. $u$ is a Haar unitary in $\mathcal A$.

\begin{definition}
Suppose that $\mathcal A$ is a unital C$^*$-algebra with a faithful
tracial state $\phi$. Then $(\mathcal A,\phi)$ is called {\em
partially diffuse} if there is a partial isometry $v$ in $\mathcal
A$ such that $vv^*=v^*v$ and $\phi(v^n)=0$ for all $n\ne 0$.
\end{definition}

\begin{theorem}
Suppose that $\mathcal A$ is a unital C$^*$-algebra with a faithful
tracial state $\phi$. Then the following are equivalent:
\begin{enumerate}
  \item $(\mathcal A,\phi)$ is partially diffuse;
  \item There is a unital C$^*$-subalgebra $\mathcal B$ of $\mathcal
  A$ such that $(\mathcal B,\frac 1{\phi(I_{\mathcal B})}\phi|_{\mathcal B})$ is diffuse. (Note we don't require that
  $\mathcal B$ contains the unit of $\mathcal A$.)
  \item There is a unital C$^*$-subalgebra $\mathcal C$ of $\mathcal
  A$ such that
  $$
      (\mathcal C, \frac 1{\phi(I_{\mathcal C})} \phi)\simeq (C(\Bbb
      T), \tau),
  $$ where   $C(\Bbb T)$ is the unital C$^*$-algebra consisting all
continuous functions on the unit circle $\Bbb T$ and  $\tau$ is a
faithful trace of $C(\Bbb T)$ induced by the Lesbeague measure on
$\Bbb T$.
\item There is a self-adjoint element $x$ in $\mathcal A$
satisfying:
\begin{enumerate}
 \item [] {\em Suppose that $X$ is the spectrum of $x$ in $\mathcal A$
 and $\mu$ is the Borel measure on $X$ induced from the trace $\phi$.
 Then there are real numbers $a<b$ in $X$ such that (i) $\mu|_{ X  \cap[a,b]} $ has
 no atom;   (ii) the distance between $X  \cap[a,b]$ and $X\setminus [a,b]$
 is
 larger than $0$.}
\end{enumerate}
\end{enumerate}
\end{theorem}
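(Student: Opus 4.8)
The plan is to close the cycle $(1)\Rightarrow(3)\Rightarrow(2)\Rightarrow(1)$, which handles the three ``algebraic'' conditions simultaneously and cheaply, and then to connect $(4)$ by proving $(3)\Rightarrow(4)$ and $(4)\Rightarrow(1)$. The substantive step is $(4)\Rightarrow(1)$; the others are short.

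For $(1)\Rightarrow(3)$: let $v$ be a (nonzero) partial isometry with $vv^{*}=v^{*}v=:p$ a projection and $\phi(v^{n})=0$ for all $n\neq0$, and set $\mathcal B:=C^{*}(v)\subseteq\mathcal A$. Since $v^{*}v=p\in\mathcal B$, this algebra is unital with unit $p$, and $\mathcal B\simeq C(\sigma(v))$ where $\sigma(v)\subseteq\Bbb T$ is the spectrum of the unitary $v$ in the corner $p\mathcal A p$. The normalized state $\frac{1}{\phi(p)}\phi|_{\mathcal B}$ corresponds to a Borel probability measure $\rho$ on $\sigma(v)$ whose $n$-th Fourier coefficient equals $\frac{1}{\phi(p)}\phi(v^{n})$, i.e. $0$ for $n\neq0$ and $1$ for $n=0$. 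Regarding $\rho$ as a probability measure on $\Bbb T$, uniqueness of the Fourier--Stieltjes transform forces $\rho$ to be normalized Haar measure, whose support is all of $\Bbb T$; since the support of $\rho$ is contained in $\sigma(v)$ this gives $\sigma(v)=\Bbb T$ and $(\mathcal B,\frac{1}{\phi(p)}\phi|_{\mathcal B})\simeq(C(\Bbb T),\tau)$, which is $(3)$. The implication $(3)\Rightarrow(2)$ is immediate since the coordinate unitary in $C(\Bbb T)$ is a Haar unitary, so $\mathcal C$ is diffuse; and $(2)\Rightarrow(1)$ is immediate because a Haar unitary $u$ in a unital subalgebra with unit $q$ is a partial isometry in $\mathcal A$ with $uu^{*}=u^{*}u=q$ and $\phi(u^{n})=0$ for $n\neq0$.

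For $(3)\Rightarrow(4)$: with $u\in\mathcal C$ the Haar unitary and $p=I_{\mathcal C}$, put $x:=10\,p+\tfrac12(u+u^{*})\in\mathcal A$. In the corner $p\mathcal A p$ this $x$ is self-adjoint with spectrum $10+[-1,1]=[9,11]$, and since $x=pxp$ its spectrum $X$ in $\mathcal A$ is $[9,11]$ if $p=I_{\mathcal A}$ and $\{0\}\cup[9,11]$ otherwise. The spectral measure $\mu$ of $x$ in the state $\phi$ restricts on $[9,11]$ to $\phi(p)$ times the pushforward of Haar measure under $z\mapsto 10+\tfrac12(z+\bar z)$, which is non-atomic because that map is at most two-to-one and Haar measure has no atoms, while the leftover mass $\phi(I_{\mathcal A}-p)$ sits at $0$. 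Taking $a=9,\ b=11$ we get $X\cap[a,b]=[9,11]$ carrying the non-atomic measure $\mu|_{[9,11]}$, at distance $\geq 9>0$ from $X\setminus[a,b]\subseteq\{0\}$; this is $(4)$.

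For $(4)\Rightarrow(1)$ — the heart of the matter — let $x,X,\mu,a<b$ be as in $(4)$ and set $Y:=X\cap[a,b]$. Condition (ii) says the closure of $X\setminus Y$ is disjoint from $Y$, hence $X\setminus Y$ is closed in $X$ and $Y$ is \emph{clopen} in $X$; therefore $\chi_{Y}\in C(X)$ and $q:=\chi_{Y}(x)$ is a projection in $\mathcal A$ with $\phi(q)=\mu(Y)>0$. Under continuous functional calculus the elements $f(x)$ with $f\in C(X)$ supported in $Y$ form a copy of $C(Y)$ inside $q\mathcal A q$, on which $\frac{1}{\mu(Y)}\phi$ is a faithful trace, i.e. $\nu:=\frac{1}{\mu(Y)}\mu|_{Y}$ is a non-atomic Borel probability measure of full support on the compact set $Y\subseteq[a,b]\subseteq\Bbb R$. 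Now define the cumulative distribution $g\colon Y\to[0,1]$ by $g(t)=\nu\big(Y\cap(-\infty,t]\big)$; since $\nu$ is non-atomic $g$ is continuous, and since it is non-decreasing with $g(a)=0$, $g(b)=1$, a short argument gives $g(Y)=[0,1]$ with $g_{*}\nu$ equal to Lebesgue measure on $[0,1]$. Let $h\colon X\to\Bbb C$ be $e^{2\pi i g}$ on $Y$ and $0$ on $X\setminus Y$; this is continuous because $Y$ is clopen and $e^{0}=e^{2\pi i}$, so $v:=h(x)\in\mathcal A$. Then $v^{*}v=vv^{*}=|h|^{2}(x)=\chi_{Y}(x)=q$, so $v$ is a partial isometry, and for $n\neq0$
$$\phi(v^{n})=\int_{X}h^{n}\,d\mu=\mu(Y)\int_{Y}e^{2\pi i n g}\,d\nu=\mu(Y)\int_{0}^{1}e^{2\pi i n t}\,dt=0,$$
so $v$ witnesses $(1)$. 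The main obstacle is exactly this last construction: manufacturing, out of a bare non-atomic measure on a possibly Cantor-like compact subset of $\Bbb R$, a \emph{continuous} $\Bbb T$-valued function whose pushforward is Haar measure — the CDF $g$ does this by continuously collapsing the complementary gaps of $Y$ — together with the observation that condition (ii) is precisely the clopen-ness needed to extend $e^{2\pi i g}$ by $0$ to a continuous function on $X$ and thus realize $v$ inside $\mathcal A$.
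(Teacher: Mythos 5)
Your proposal is correct, and it is considerably more self-contained than what the paper actually writes: the paper disposes of the whole theorem in two lines, declaring $(1)\Leftrightarrow(2)\Leftrightarrow(3)$ ``obvious'' and outsourcing $(1)\Leftrightarrow(4)$ entirely to Lemma 4.2 of the Dykema--Haagerup--R{\o}rdam paper \cite{DHR}. You instead close the cycle $(1)\Rightarrow(3)\Rightarrow(2)\Rightarrow(1)$ with the Fourier--Stieltjes uniqueness argument (a Haar-distributed unitary in a corner forces the spectral measure to be Haar measure, hence full spectrum $\Bbb T$), and you prove $(3)\Rightarrow(4)$ and $(4)\Rightarrow(1)$ from scratch. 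The substantive content you supply --- observing that condition (ii) of $(4)$ makes $Y=X\cap[a,b]$ clopen so that $\chi_Y(x)$ is a genuine projection in $\mathcal A$, and then using the cumulative distribution function of the non-atomic measure $\nu$ as a probability integral transform to push $\nu$ to Lebesgue measure and manufacture a Haar partial isometry $e^{2\pi i g}(x)\chi_Y(x)$ --- is exactly the kind of argument the cited DHR lemma encapsulates, so mathematically you are reconstructing their proof rather than finding a new one; but as written your version is a complete, verifiable argument where the paper offers only a pointer. Two trivial points you gloss over but which do hold: $\mu(Y)>0$ because $a,b\in Y$ and $Y$ is a nonempty open subset of $X=\operatorname{supp}\mu$ (faithfulness of $\phi$), and the surjectivity $g(Y)=[0,1]$ needs the observation that $g$ extends to a continuous non-decreasing function on all of $[a,b]$ that is constant on the complementary gaps of $Y$, so the intermediate value theorem applies there and every value is already attained on $Y$.
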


\begin{proof}
$(1)\Leftrightarrow  (2)\Leftrightarrow  (3)$ is obvious.
$(1)\Leftrightarrow (4)$ is by Lemma 4.2 in \cite{DHR}.
\end{proof}

\begin{proposition}
Let $\mathcal A$ and $\mathcal B\ne \Bbb C$ be unital separable
 C$^*$-algebras with faithful traces $\phi$, and $\psi$ respectively.
If $(\mathcal A,\phi)$ is partially diffuse,  then
$$
(\mathcal A,\phi)*_{red}(\mathcal B,\psi)
$$ is not a quasidiagonal C$^*$-algebra.
\end{proposition}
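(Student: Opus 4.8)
The plan is to reduce this statement to Proposition~4.1.2 by producing inside $\mathcal A$ a \emph{unital} C$^*$-subalgebra of the form $C(\Bbb T)\oplus\Bbb C$ whose restricted trace has precisely the shape required there, and then to invoke that quasidiagonality passes to C$^*$-subalgebras together with the embedding of reduced free products recorded in Remark~2.3.1.

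First I would invoke Theorem~4.1.2. Since $(\mathcal A,\phi)$ is partially diffuse, the implication $(1)\Rightarrow(3)$ produces a C$^*$-subalgebra $\mathcal C\subseteq\mathcal A$ whose unit is a projection $p=I_{\mathcal C}$ (in general $p\ne I_{\mathcal A}$), such that, writing $\alpha=\phi(p)$,
$$
\Bigl(\mathcal C,\ \frac1\alpha\,\phi|_{\mathcal C}\Bigr)\ \simeq\ (C(\Bbb T),\tau).
$$
Faithfulness of $\phi$ together with $p\ne 0$ force $0<\alpha\le 1$. If $p=I_{\mathcal A}$, then $\alpha=1$, so $\mathcal C$ is a unital C$^*$-subalgebra of $\mathcal A$ with $(\mathcal C,\phi|_{\mathcal C})\simeq(C(\Bbb T),\tau)$; by Remark~2.3.1 the algebra $(C(\Bbb T),\tau)*_{red}(\mathcal B,\psi)$ embeds unitally into $(\mathcal A,\phi)*_{red}(\mathcal B,\psi)$, and since the former is not quasidiagonal by Proposition~4.1.1, neither is the latter.

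In the remaining (and typical) case $0<\alpha<1$, the decisive step is the choice of an auxiliary subalgebra. Because $p$ is the identity of $\mathcal C$, every $c\in\mathcal C$ satisfies $c=pc=cp$, hence $c(1-p)=(1-p)c=0$; therefore the C$^*$-subalgebra $\mathcal A'$ of $\mathcal A$ generated by $\mathcal C$ and $1-p$ is the direct sum
$$
\mathcal A'\ =\ \mathcal C\oplus\Bbb C(1-p)\ \simeq\ C(\Bbb T)\oplus\Bbb C ,
$$
which contains $I_{\mathcal A}=p+(1-p)$. Under this identification its restricted trace is $\alpha\,\tau+(1-\alpha)\,\tau_2$, where $\tau_2$ is the faithful trace on $\Bbb C$ — exactly the trace appearing in Proposition~4.1.2 with $\mathcal A_2=\Bbb C$. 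Applying Remark~2.3.1 to the unital inclusion $\mathcal A'\subseteq\mathcal A$ yields a unital embedding
$$
(\mathcal A',\phi|_{\mathcal A'})*_{red}(\mathcal B,\psi)\ \subseteq\ (\mathcal A,\phi)*_{red}(\mathcal B,\psi),
$$
and since $\mathcal B\ne\Bbb C$, Proposition~4.1.2 shows the left-hand side is not quasidiagonal; as quasidiagonality descends to C$^*$-subalgebras, $(\mathcal A,\phi)*_{red}(\mathcal B,\psi)$ is not quasidiagonal either.

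I expect the only genuinely non-mechanical point to be the identification of $\mathcal A'$: the diffuse subalgebra furnished by Theorem~4.1.2 behaves like a corner of $\mathcal A$ and in general fails to contain $I_{\mathcal A}$, so one cannot directly form its reduced free product with $\mathcal B$ inside $(\mathcal A,\phi)*_{red}(\mathcal B,\psi)$; padding it with the complementary projection $1-p$ cures this while leaving the trace in the direct-sum form demanded by Proposition~4.1.2. Everything else is routine bookkeeping — checking that $\phi|_{\mathcal A'}$ is faithful (immediate from faithfulness of $\phi$) so that the relevant reduced free products and free-product traces are well defined, that $\mathcal A'$ is separable, and that the hereditary property of quasidiagonality for C$^*$-subalgebras applies.
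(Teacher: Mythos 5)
Your proposal is correct and follows essentially the same route as the paper: the paper's proof likewise invokes Theorem 4.1.2 to obtain $\mathcal C$ with unit $p=I_{\mathcal C}$, forms the unital subalgebra $\mathcal Cp+\Bbb Cq$ with $q=I_{\mathcal A}-p$ (your $\mathcal A'=\mathcal C\oplus\Bbb C(1-p)$), and concludes via Proposition 4.1.2 and the embedding of reduced free products. Your explicit treatment of the degenerate case $p=I_{\mathcal A}$ via Proposition 4.1.1 is a minor point the paper leaves implicit, but the argument is the same.
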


\begin{proof} Note that $\mathcal A$ is partially diffuse. By Theorem 4.1.2, there is a unital C$^*$-subalgebra $\mathcal C$ of $\mathcal
A$ such that
$$
(\mathcal C, \frac {1}{\phi(I_{\mathcal C})} \phi)\simeq (C(\Bbb
T),\tau),
$$ where   $C(\Bbb T)$ is the unital C$^*$-algebra consisting all
continuous functions on the unit circle $\Bbb T$ and  $\tau$ is a
faithful trace of $C(\Bbb T)$ induced by the Lesbeague measure on
$\Bbb T$. Let $p= I_{\mathcal C} $ and $q=I_{\mathcal A}-p$ be the
projections in $\mathcal A$. Then $\phi$ is a faithful trace on the
unital C$^*$-subalgebra $  \mathcal Cp +\Bbb C q$ of $\mathcal A$
and
$$
( \mathcal C p+\Bbb C q, \phi)*_{red} (\mathcal B,\psi)\subseteq
(\mathcal A,\phi)*_{red}(\mathcal B,\psi).
$$ By Proposition 4.1.2, we know that $$
(\mathcal A,\phi)*_{red}(\mathcal B,\psi)
$$ is not a quasidiagonal C$^*$-algebra.
\end{proof}

\begin{lemma}
  Let $\mathcal A=\Bbb C\oplus \Bbb C$ and $\mathcal B=\Bbb
C\oplus \Bbb C$  with faithful traces $\phi$, and $\psi$
respectively. Let $p=1\oplus 0$ be a projection in $\mathcal B$.
Then   $$ (p ((\mathcal A,\phi)*_{red}(\mathcal B,\psi) )p, \frac 1
{\psi(p)} (\phi*\psi)|_{p ((\mathcal A,\phi)*_{red}(\mathcal B,\psi)
)p })
$$ is partially diffuse.
\end{lemma}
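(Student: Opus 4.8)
The plan is to produce, inside the corner $p\mathcal Dp$, one self-adjoint element whose spectral distribution splits off an isolated non-atomic piece, and then to invoke the implication $(4)\Rightarrow(1)$ of Theorem 4.1.2. Write $e=1\oplus0\in\mathcal A$ and $p=1\oplus0\in\mathcal B$ for the generating projections and set $\alpha=\phi(e)$, $\beta=\psi(p)$; faithfulness of $\phi$ and $\psi$ on $\Bbb C\oplus\Bbb C$ forces $\alpha,\beta\in(0,1)$. In $\mathcal D:=(\mathcal A,\phi)*_{red}(\mathcal B,\psi)$ with its faithful trace $\tau:=\phi*\psi$, the projections $e$ and $p$ are free, and $\tau_p:=\tfrac1\beta\,\tau|_{p\mathcal Dp}$ is a faithful tracial state on the unital C$^*$-algebra $p\mathcal Dp$ (whose unit is $p$). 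I would work with the self-adjoint element $x:=pep\in p\mathcal Dp$, which satisfies $0\le x\le p$.

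The heart of the matter is the distribution $\mu$ of $x$ in $(p\mathcal Dp,\tau_p)$. Since $e$ and $p$ are free projections, the moments $\tau_p(x^n)$ for $n\ge1$ coincide, up to the factor $1/\beta$, with the moments of the free multiplicative convolution of the two Bernoulli laws $(1-\alpha)\delta_0+\alpha\delta_1$ and $(1-\beta)\delta_0+\beta\delta_1$; carrying out the $S$-transform computation, with $S(w)=\dfrac{(1+w)^2}{(\alpha+w)(\beta+w)}$, and then applying Stieltjes inversion --- the classical analysis of a product of two free projections --- one obtains
$$
\mu=c_0\,\delta_0+c_1\,\delta_1+\mu_{ac},\qquad c_0,c_1\ge0,
$$
where $\mu_{ac}$ is absolutely continuous, has strictly positive total mass $1-c_0-c_1>0$, and is supported on the interval $[\gamma_-,\gamma_+]$ with
$$
\gamma_\pm=\alpha+\beta-2\alpha\beta\pm2\sqrt{\alpha\beta(1-\alpha)(1-\beta)}.
$$
As $\alpha,\beta\in(0,1)$, one gets $0\le\gamma_-<\gamma_+\le1$, the strict middle inequality being exactly $\alpha\beta(1-\alpha)(1-\beta)>0$, and (since the atom at $0$, resp.\ at $1$, has mass $0$ whenever $\gamma_-=0$, resp.\ $\gamma_+=1$) one checks $\mu|_{[\gamma_-,\gamma_+]}=\mu_{ac}$. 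Because $\tau_p$ is faithful, the spectrum $X$ of $x$ in $p\mathcal Dp$ equals $\operatorname{supp}\mu$, so $[\gamma_-,\gamma_+]\subseteq X\subseteq[\gamma_-,\gamma_+]\cup\{0,1\}$.

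It then remains to read off condition (4) of Theorem 4.1.2 for this $x$, with $a=\gamma_-$ and $b=\gamma_+$: the restriction $\mu|_{X\cap[\gamma_-,\gamma_+]}=\mu_{ac}$ has no atom, while $X\setminus[\gamma_-,\gamma_+]\subseteq\{0,1\}$, whenever nonempty, sits at distance at least $\min(\gamma_-,\,1-\gamma_+)>0$ from $[\gamma_-,\gamma_+]$; hence $(p\mathcal Dp,\tau_p)$ is partially diffuse. In the degenerate case $c_0=c_1=0$ (so $X=[\gamma_-,\gamma_+]$ and $\mu=\mu_{ac}$ is non-atomic with full support), I would instead argue directly: $C^*(x)\cong C(X)$ then contains a unital copy of $C(\Bbb T)$ carrying normalized Lebesgue measure --- push $\mu$ forward by $\exp(2\pi i F)$, where $F$ is the cumulative distribution function of $\mu$ --- so condition (3) of Theorem 4.1.2 applies. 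The one genuinely substantive ingredient, and the step I expect to be the main obstacle, is establishing the displayed shape of $\mu$: that its only possible atoms lie at $0$ and $1$ and, crucially, that $\mu_{ac}$ is nonzero, so that the whole interval $[\gamma_-,\gamma_+]$ belongs to the spectrum. This is exactly the classical description of the product of two free projections, which I would obtain from the $S$-transform computation together with Stieltjes inversion, or import from the literature.
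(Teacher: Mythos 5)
Your argument is correct and follows essentially the same route as the paper: both reduce the claim to the classical spectral description of the algebra generated by two free projections and then apply the criterion of Theorem 4.1.2(4) (which is Lemma 4.2 of \cite{DHR}, the reference the paper invokes). The only difference is that you derive the distribution of $pep$ --- atoms only at $0$ and $1$, plus a nonzero absolutely continuous piece on $[\gamma_-,\gamma_+]$ --- by the $S$-transform computation, whereas the paper imports the same data wholesale from the structure theorem for $(\Bbb C\oplus\Bbb C)*_{red}(\Bbb C\oplus\Bbb C)$ in Theorem 13 of \cite{ABH}.
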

\begin{proof}The  C$^*$-algebra $(\mathcal
A,\phi)*_{red}(\mathcal B,\psi)$ was totally determined in Theorem
13 of \cite{ABH} (see also Proposition 2.7 in \cite{Dyk}). Thus the
structure of $p(\mathcal A,\phi)*_{red}(\mathcal B,\psi)p $ is also
determined as listed in Theorem 13 of \cite{ABH}. Now the rest
follows from Lemma 4.2 in \cite{DHR} (see also the proof of Lemma
4.1 in \cite{Dyk}).
\end{proof}
\begin{lemma}
Let $\tau_1,\tau_2$ and $\psi$ be faithful traces on the C$^*$
algebras $\mathcal A_1=\Bbb C\oplus \Bbb C$, $\mathcal A_2=\Bbb
C\oplus\Bbb C\oplus\Bbb C$ and $\mathcal A_3= \mathcal M_2(\Bbb C)$
respectively. Then\begin{enumerate}
  \item [(i)] $(\mathcal A_1,\tau_1)*_{red}(\mathcal A_2,\tau_2)=( \Bbb C\oplus \Bbb C,\tau_1)*_{red} ( \Bbb C\oplus \Bbb C\oplus \Bbb
  C,\tau_2)$ is not a quasidiagonal C$^*$-algebra;
  \item [(ii)] $(\mathcal A_1,\tau_1)*_{red}(\mathcal A_3,\psi)= ( \Bbb C\oplus \Bbb C,\tau_1)*_{red} ( \mathcal M_2(\Bbb C),\psi)$ is not a quasidiagonal
  C$^*$-algebra.
\end{enumerate}
\end{lemma}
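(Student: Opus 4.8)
The plan is to prove both (i) and (ii) by locating, inside a corner $p\mathcal D p$ of the reduced free product $\mathcal D$ in question, a reduced free product one of whose two factors is partially diffuse. Proposition 4.1.3 then shows $p\mathcal D p$ is not quasidiagonal, and since $p\mathcal D p$ is a C$^*$-subalgebra of $\mathcal D$ and a C$^*$-subalgebra of a separable quasidiagonal C$^*$-algebra is again quasidiagonal, it follows that $\mathcal D$ is not quasidiagonal. The tools I intend to use are Lemma 4.1.3 (a corner of $(\Bbb C\oplus\Bbb C,\phi)*_{red}(\Bbb C\oplus\Bbb C,\psi)$ cut down by a minimal projection of one factor is partially diffuse), Dykema's compression formula (Lemma 4.1.2), and Remark 2.3.1 on passing to free products of unital subalgebras.

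For (i): using that the reduced free product is independent of the order of its factors, write $\mathcal D=(\Bbb C\oplus\Bbb C,\tau_1)*_{red}(\Bbb C\oplus\Bbb C\oplus\Bbb C,\tau_2)\cong(\Bbb C\oplus\Bbb C\oplus\Bbb C,\tau_2)*_{red}(\Bbb C\oplus\Bbb C,\tau_1)$, decompose the first factor as $\mathcal A_1\oplus\mathcal A_2$ with $\mathcal A_1=\Bbb C\oplus\Bbb C$ and $\mathcal A_2=\Bbb C$, and put $p=1_{\mathcal A_1}\oplus 0$, so that $\alpha:=\tau_2(p)\in(0,1)$ by faithfulness of $\tau_2$. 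First I would apply Lemma 4.1.2 with $\mathcal A=\Bbb C\oplus\Bbb C\oplus\Bbb C$ and $\mathcal B=\Bbb C\oplus\Bbb C$: the C$^*$-subalgebra $\mathcal D_1$ generated by $\Bbb Cp+(0\oplus\mathcal A_2)$ together with $\mathcal B$ is, by Remark 2.3.1 and the freeness of $\Bbb Cp+(0\oplus\mathcal A_2)\subseteq\mathcal A$ from $\mathcal B$, $*$-isomorphic to $(\Bbb C\oplus\Bbb C,\sigma)*_{red}(\Bbb C\oplus\Bbb C,\tau_1)$ for the faithful trace $\sigma=\tau_2|$, with $p$ corresponding to a minimal projection of the first factor. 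Lemma 4.1.2 then yields
$$\bigl(p\mathcal D_1 p,\tfrac1\alpha(\tau_2*\tau_1)|\bigr)*_{red}\bigl(\mathcal A_1,\tfrac1\alpha\tau_2|_{\mathcal A_1}\bigr)\;\cong\;\bigl(p\mathcal D p,\tfrac1\alpha(\tau_2*\tau_1)|\bigr)\;\subseteq\;\mathcal D ,$$
and Lemma 4.1.3 (applied after interchanging the two free factors of $\mathcal D_1$) shows the first factor on the left is partially diffuse. Since $\mathcal A_1\cong\Bbb C\oplus\Bbb C\neq\Bbb C$, Proposition 4.1.3 gives that $p\mathcal D p$, hence $\mathcal D$, is not quasidiagonal.

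For (ii): write $\mathcal D=(\Bbb C\oplus\Bbb C,\tau_1)*_{red}(\mathcal M_2(\Bbb C),\psi)$ with $\psi$ the tracial state of $\mathcal M_2(\Bbb C)$; let $\{e_{ij}\}$ be matrix units of $\mathcal M_2(\Bbb C)$ and $\{r_1,r_2\}$ the minimal projections of $\Bbb C\oplus\Bbb C$. By Remark 2.3.1 the C$^*$-subalgebra generated by $\{r_1,r_2\}$ together with the diagonal $\Bbb Ce_{11}+\Bbb Ce_{22}$ is a copy of $(\Bbb C\oplus\Bbb C,\tau_1)*_{red}(\Bbb C\oplus\Bbb C,(\tfrac12,\tfrac12))$, so Lemma 4.1.3 shows its $e_{11}$-corner, namely $C^*(e_{11}r_1e_{11})$, is partially diffuse for the appropriately rescaled restriction of $\tau_1*\psi$. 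Next I would check, by a moment computation of exactly the flavour of the Claim in the proof of Lemma 3.2.4 — centering the matrix units $e_{ij}$ and using that $r_1$ is free from $\mathcal M_2(\Bbb C)$ in $\mathcal D$ — that $e_{11}r_1e_{11}$ and $e_{12}r_1e_{21}$ are free in $\bigl(e_{11}\mathcal D e_{11},\tfrac1{\psi(e_{11})}(\tau_1*\psi)|\bigr)$; and that $e_{12}r_1e_{21}$ is not a scalar multiple of $e_{11}$ (otherwise $e_{22}r_1e_{22}$ would be a scalar, impossible for the free projections $r_1,e_{22}$, whose traces lie in $(0,1)$). This provides a unital embedding $C^*(e_{11}r_1e_{11})*_{red}C^*(e_{12}r_1e_{21})\hookrightarrow e_{11}\mathcal D e_{11}$ with partially diffuse first factor and second factor $\neq\Bbb C$, so Proposition 4.1.3 shows $e_{11}\mathcal D e_{11}$, hence $\mathcal D$, is not quasidiagonal. (Alternatively, when $\tau_1=(\tfrac12,\tfrac12)$ one may avoid the corner argument and apply Theorem 4.1.1 directly, taking $u=r_1-r_2$, $v=e_{11}-e_{22}$, $w=e_{12}+e_{21}$.)

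The step I expect to be the main obstacle is the compression computation in (ii): establishing that $e_{11}r_1e_{11}$ and $e_{12}r_1e_{21}$ are free in the corner, equivalently pinning down enough of the structure of $e_{11}\bigl((\Bbb C\oplus\Bbb C)*_{red}\mathcal M_2(\Bbb C)\bigr)e_{11}$. This is a standard compression-of-a-reduced-free-product-by-matrix-units phenomenon, but it needs either the combinatorial bookkeeping sketched above or an appeal to Dykema's compression results; by contrast, in part (i) the analogous corner is handed to us off the shelf by Lemma 4.1.2, so (i) is essentially a formal consequence of the already-established results together with Lemma 4.1.3.
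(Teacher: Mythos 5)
Your part (i) is, up to relabeling which factor plays the role of $\mathcal A=\mathcal A_1\oplus\mathcal A_2$ in Dykema's Lemma 4.1.2, exactly the paper's argument: the subalgebra $\Bbb Cp+\Bbb Cq$ of $\Bbb C\oplus\Bbb C\oplus\Bbb C$ together with the other copy of $\Bbb C\oplus\Bbb C$ generates a copy of $(\Bbb C\oplus\Bbb C)*_{red}(\Bbb C\oplus\Bbb C)$ whose $p$-corner is partially diffuse by Lemma 4.1.3, Lemma 4.1.2 exhibits $p\mathcal Dp$ as the reduced free product of that corner with $\Bbb C\oplus\Bbb C\neq\Bbb C$, and Proposition 4.1.3 finishes. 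Nothing to add there.

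Part (ii) is where you diverge from the paper, and where your proposal has its one real soft spot. The paper does not compress at all: it puts $u_1=e_{11}-e_{22}$ and $u_2=e_{12}+e_{21}$ and observes that $\mathcal A_1$, $u_1\mathcal A_1u_1^*$ and $u_2\mathcal A_1u_2^*$ are free in $(\Bbb C\oplus\Bbb C,\tau_1)*_{red}(\mathcal M_2(\Bbb C),\psi)$; this is a short check, since an alternating word in centered elements of these three algebras regroups as an alternating word in centered elements of $\mathcal A_1$ and the trace-zero matrices $u_1,u_2,u_1^*u_2,u_2^*u_1$, hence vanishes under $\tau_1*\psi$ by the defining freeness. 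The first two copies generate $(\Bbb C\oplus\Bbb C,\tau_1)*_{red}(\Bbb C\oplus\Bbb C,\tau_1)$, which is partially diffuse by Lemma 4.1.3 and free from the third copy, so Proposition 4.1.3 applies directly. Your route instead passes to the corner $e_{11}\mathcal De_{11}$ and rests on the claim that $e_{11}r_1e_{11}$ and $e_{12}r_1e_{21}$ are free there. That claim is in fact correct (it is the standard compression-by-matrix-units phenomenon, verifiable by the cumulant computation you allude to or extractable from Dykema's compression results), and your auxiliary points --- that $e_{11}C^*(r_1,e_{11})e_{11}=C^*(e_{11},e_{11}r_1e_{11})$ is partially diffuse, and that $e_{12}r_1e_{21}\notin\Bbb Ce_{11}$ --- are fine. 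But as written the freeness of the two compressed elements is only asserted, and it carries essentially all of the content of your part (ii): without it the argument does not close, and your fallback via Theorem 4.1.1 only covers the symmetric trace $\tau_1=(\tfrac12,\tfrac12)$. So either supply the moment computation (or a precise citation), or, more economically, replace the compression step by the conjugation trick above, which produces the needed free pair with partially diffuse first factor without ever leaving the ambient algebra.
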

\begin{proof}
(i) Let $\mathcal B=\Bbb C\oplus \Bbb C\oplus 0\subset \mathcal
A_2$ be a C$^*$-subalgebra of $\mathcal A_2$.   Let $p=1\oplus
1\oplus 0$ and $q= 0\oplus 0\oplus 1$ be projections in $\mathcal
A_2$. Let $\mathcal D_1$ be the C$^*$-subalgebra generated by
$\mathcal A_1$ and $\Bbb Cp+ \Bbb
  Cq$ in $(\mathcal A_1,\tau_1)*_{red} ( \mathcal
  A_2,\tau_2)$. Then
$$
( \mathcal A_1,\tau_1)*_{red} ( \mathcal
  A_2,\tau_2) \supseteq (\mathcal D_1,\tau_1*\tau_2|_{\mathcal D_1} )\simeq \left  ( \Bbb C\oplus \Bbb
C,\tau_1)*_{red} ( \Bbb Cp+ \Bbb
  Cq,\tau_2 \right );
$$
and, by Lemma 4.1.2, we have
\begin{enumerate}\item[ ] {\em   Fact 1:  $p\mathcal D_1p$ and $\mathcal B\oplus 0$ are free in $p\left ( ( \mathcal A_1,\tau_1)*_{red} ( \mathcal
  A_2,\tau_2)\right )p$ with respect to \\ $\frac 1 {\tau_2(p)}(\tau_1*\tau_2)|_{p\left (( \mathcal A_1,\tau_1)*_{red} ( \mathcal
  A_2,\tau_2)\right )p}$. }\end{enumerate}  By
Lemma 4.1.3, we know that  $(p\mathcal D_1p,\frac 1
{\tau_2(p)}(\tau_1*\tau_2)|_{p\mathcal D_1p})$ is partially diffuse.
Note that $\mathcal B\ne \Bbb C$. Combining with
  Proposition 4.1.3 and Fact 1, we know that the C$^*$-subalgebra generated $p\mathcal D_1p$ and $\mathcal B\oplus 0$ in $( \mathcal A_1,\tau_1)*_{red} ( \mathcal
  A_2,\tau_2)$ is not quasidiagonal. Hence    $( \mathcal A_1,\tau_1)*_{red} ( \mathcal
  A_2,\tau_2)$ is not quasidiagonal.

  (ii) Note $\mathcal A_1=\Bbb C\oplus \Bbb C$. Let
  $$
  u_1=\begin{pmatrix}
   1 &0\\ 0&-1
  \end{pmatrix} \qquad and \qquad  u_2=\begin{pmatrix}
   0 &1\\ 1&0
  \end{pmatrix}
  $$ be unitaries in $\mathcal M_2(\Bbb C)$. Then  $\mathcal A_1$,
  $u_1\mathcal A_1 u_1^*$ and $u_2\mathcal A_1 u_2^*$ are free in $( \Bbb C\oplus \Bbb C,\tau_1)*_{red} ( \mathcal M_2(\Bbb
  C),\psi)$.   Let
   $\mathcal B$ be C$^*$-subalgebra generated by  $\mathcal A_1$ and
  $u_1\mathcal A_1 u_1^*$  in $( \Bbb C\oplus \Bbb C,\tau_1)*_{red} ( \mathcal M_2(\Bbb
  C),\psi)$.
Then
$$
  \mathcal B\simeq (\mathcal A_1,\tau_1)*_{red}(\mathcal
  A_1,\tau_1)=(\Bbb C\oplus \Bbb C,\tau_1)*_{red}  (\Bbb C\oplus \Bbb
  C,\tau_1);
$$ and
\begin{enumerate}\item[ ] {\em  Fact 2: $\mathcal B$ and
$u_2\mathcal A_1 u_2^*$ are free in $( \Bbb C\oplus \Bbb
C,\tau_1)*_{red} ( \mathcal M_2(\Bbb
  C),\psi)$.} \end{enumerate}
    By
Lemma 4.1.3, $\mathcal B$ is partially diffuse.  Combining with
Proposition 4.1.3 and Fact 2, we know that the C$^*$-subalgebra
generated $\mathcal B$ and $u_2\mathcal A_1 u_2^*$   in $( \Bbb
C\oplus \Bbb C,\tau_1)*_{red} ( \mathcal M_2(\Bbb
  C),\psi)$ is not quasidiagonal.
  Hence
  $( \Bbb C\oplus \Bbb C,\tau_1)*_{red} ( \mathcal M_2(\Bbb
  C),\psi)$  is not quasidiagonal.
\end{proof}

The following proposition follows directly from preceding lemma.
\begin{proposition}
Suppose that $\mathcal A_1$  and $\mathcal A_2$  are unital
separable C$^*$-algebras with faithful tracial states $\tau_1$, and
$\tau_2$ respectively. If there are C$^*$-subalgebras $I_{\mathcal
A_i}\in\mathcal B_i\subseteq \mathcal A_i$ for $i=1,2$ such that (i)
$\mathcal B_1\simeq \Bbb C\oplus \Bbb C$; and (ii) either $\mathcal
B_2\simeq \Bbb C\oplus \Bbb C\oplus \Bbb C$ or $\mathcal B_2\simeq
\mathcal M_2(\Bbb C),$ then
$$
 (\mathcal A_1,\tau_1)*_{red}(\mathcal A_2,\tau_2)
$$ is not a quasidiagonal C$^*$-algebra.
\end{proposition}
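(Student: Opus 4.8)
The plan is to reduce the statement to Lemma 4.1.4 by exploiting the behaviour of reduced free products under passage to unital subalgebras, exactly as recorded in Remark 2.3.1. Since $I_{\mathcal A_i}\in\mathcal B_i\subseteq\mathcal A_i$ is a unital C$^*$-subalgebra and $\tau_i$ is a faithful tracial state on $\mathcal A_i$, the restriction $\tau_i|_{\mathcal B_i}$ is again a faithful tracial state on $\mathcal B_i$ for $i=1,2$. By Remark 2.3.1 there is then an embedding
$$
(\mathcal B_1,\tau_1|_{\mathcal B_1})*_{red}(\mathcal B_2,\tau_2|_{\mathcal B_2})\subseteq(\mathcal A_1,\tau_1)*_{red}(\mathcal A_2,\tau_2),
$$
and, since the reduced free product depends only on the pairs $(\mathcal B_i,\tau_i|_{\mathcal B_i})$ up to trace-preserving $*$-isomorphism, we may transport the $\mathcal B_i$ to their standard models via the given $*$-isomorphisms, obtaining faithful traces $\tau_1'$ on $\Bbb C\oplus\Bbb C$ and $\tau_2'$ on either $\Bbb C\oplus\Bbb C\oplus\Bbb C$ or $\mathcal M_2(\Bbb C)$ with
$$
(\mathcal B_1,\tau_1|_{\mathcal B_1})*_{red}(\mathcal B_2,\tau_2|_{\mathcal B_2})\simeq(\Bbb C\oplus\Bbb C,\tau_1')*_{red}\mathcal B_2',\quad \mathcal B_2'\in\{(\Bbb C\oplus\Bbb C\oplus\Bbb C,\tau_2'),(\mathcal M_2(\Bbb C),\tau_2')\}.
$$

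Next I would apply Lemma 4.1.4 directly. Under hypothesis (i)$+$(ii) the C$^*$-algebra just displayed is exactly one of the two algebras treated in Lemma 4.1.4: case (i) of that lemma if $\mathcal B_2\simeq\Bbb C\oplus\Bbb C\oplus\Bbb C$, and case (ii) if $\mathcal B_2\simeq\mathcal M_2(\Bbb C)$. In either case Lemma 4.1.4 asserts that $(\mathcal B_1,\tau_1|_{\mathcal B_1})*_{red}(\mathcal B_2,\tau_2|_{\mathcal B_2})$ is \emph{not} a quasidiagonal C$^*$-algebra.

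Finally I would invoke the permanence property recalled right before Subsection 4.1, namely that a C$^*$-subalgebra of a separable quasidiagonal C$^*$-algebra is again quasidiagonal, so that a separable C$^*$-algebra containing a non-quasidiagonal C$^*$-subalgebra cannot itself be quasidiagonal. Applying this to the embedding of the preceding paragraph shows that $(\mathcal A_1,\tau_1)*_{red}(\mathcal A_2,\tau_2)$ is not quasidiagonal, which is the assertion of the proposition.

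\textbf{On the main obstacle.} There is essentially no remaining difficulty: all of the genuine content — non-quasidiagonality of $(\Bbb C\oplus\Bbb C)*_{red}(\Bbb C\oplus\Bbb C\oplus\Bbb C)$ and of $(\Bbb C\oplus\Bbb C)*_{red}\mathcal M_2(\Bbb C)$, obtained in Lemma 4.1.4 from Lemmas 4.1.2 and 4.1.3 together with Proposition 4.1.3 (and ultimately from the free-entropy-dimension obstruction in Proposition 4.1.1) — has already been established. The only points to verify are routine: that the restriction of a faithful trace to a unital subalgebra stays faithful, and that the embedding furnished by Remark 2.3.1 carries the free-product state of the small algebra to the restriction of the free-product state of the big one, which is immediate from the GNS/Hilbert-space free product construction used to define $*_{red}$.
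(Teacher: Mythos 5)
Your proposal is correct and coincides with the paper's own (very terse) argument: the paper simply states that the proposition "follows directly from the preceding lemma," meaning exactly the reduction you spell out via Remark 2.3.1, Lemma 4.1.4 applied with the restricted (still faithful) traces, and the hereditary permanence of quasidiagonality. Nothing further is needed.
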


We are are ready to show the following statement.
\begin{theorem}
Suppose that $\mathcal A_1$  and $\mathcal A_2$  are unital
separable AF algebras with faithful tracial states $\tau_1$, and
$\tau_2$ respectively. If  $dim_{\Bbb C}\mathcal A\ge 2$ and
$dim_{\Bbb C}\mathcal A_2\ge 3$, then
$$
 (\mathcal A_1,\tau_1)*_{red}(\mathcal A_2,\tau_2)
$$ is not a quasidiagonal C$^*$-algebra.
\end{theorem}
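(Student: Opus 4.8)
The plan is to reduce the statement to Proposition 4.1.4, which already shows that $(\mathcal A_1,\tau_1)*_{red}(\mathcal A_2,\tau_2)$ fails to be quasidiagonal as soon as $\mathcal A_1$ contains a unital C$^*$-subalgebra $\mathcal B_1\simeq\Bbb C\oplus\Bbb C$ and $\mathcal A_2$ contains a unital C$^*$-subalgebra $\mathcal B_2$ isomorphic to $\Bbb C\oplus\Bbb C\oplus\Bbb C$ or to $\mathcal M_2(\Bbb C)$. Thus the whole theorem comes down to extracting such subalgebras from the hypotheses $\dim_{\Bbb C}\mathcal A_1\ge 2$ and $\dim_{\Bbb C}\mathcal A_2\ge 3$.

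I would first treat the case where $\mathcal A_1$ and $\mathcal A_2$ are finite dimensional, which is pure bookkeeping. A finite dimensional C$^*$-algebra $\bigoplus_{i=1}^{r}\mathcal M_{n_i}(\Bbb C)$ has identity equal to an orthogonal sum of $N=\sum_i n_i$ nonzero minimal projections. If its dimension is $\ge 2$ then $N\ge 2$ (the only finite dimensional C$^*$-algebra with $N=1$ is $\Bbb C$), so grouping the minimal projections into two nonempty families writes the identity as $p+(1-p)$ with $p\notin\{0,1\}$, and $\Bbb C p+\Bbb C(1-p)$ is then a unital subalgebra $\simeq\Bbb C\oplus\Bbb C$. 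If the dimension is $\ge 3$ then either $N\ge 3$, and grouping the minimal projections into three nonempty families yields a unital subalgebra $\simeq\Bbb C\oplus\Bbb C\oplus\Bbb C$, or else $N\le 2$, in which case the unique finite dimensional C$^*$-algebra of dimension $\ge 3$ with $N\le 2$ is $\mathcal M_2(\Bbb C)$ itself, which is already of the required form. This settles the finite dimensional case.

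It remains to reduce the infinite dimensional case to the finite dimensional one, and here the AF hypothesis enters. If $\mathcal A_i$ is infinite dimensional, choose three linearly independent elements of $\mathcal A_i$ and, by the definition of an AF algebra with $\epsilon$ small enough, approximate them within $\epsilon$ by elements of a unital finite dimensional C$^*$-subalgebra $\mathcal B\subseteq\mathcal A_i$; for small $\epsilon$ the approximants stay linearly independent, so $\dim_{\Bbb C}\mathcal B\ge 3$. Applying the finite dimensional analysis to $\mathcal B$ produces a unital subalgebra of $\mathcal B$, and hence a unital subalgebra of $\mathcal A_i$, of the required type ($\Bbb C\oplus\Bbb C$ when $i=1$; $\Bbb C\oplus\Bbb C\oplus\Bbb C$ or $\mathcal M_2(\Bbb C)$ when $i=2$). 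In every case $\mathcal A_1$ and $\mathcal A_2$ now contain unital subalgebras satisfying the hypotheses of Proposition 4.1.4, and that proposition finishes the proof. There is essentially no obstacle left at this stage: all the genuine work has already been invested in Lemma 4.1.4 and Proposition 4.1.4, and the only point needing a little care is to keep every finite dimensional subalgebra produced along the way unital in the ambient algebra, so that Proposition 4.1.4 applies verbatim.
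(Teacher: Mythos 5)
Your proposal is correct and takes essentially the same route as the paper: the paper's proof of this theorem is exactly the reduction to Proposition 4.1.4 via unital subalgebras $\mathcal B_1\simeq\Bbb C\oplus\Bbb C$ and $\mathcal B_2\simeq\Bbb C\oplus\Bbb C\oplus\Bbb C$ or $\mathcal M_2(\Bbb C)$, whose existence the paper asserts without the bookkeeping you supply. Your extra details (minimal projections in the finite dimensional case, and the stability of linear independence under small perturbations to extract a unital finite dimensional subalgebra of dimension $\ge 3$ in the infinite dimensional case) are a valid filling-in of what the paper leaves implicit.
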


\begin{proof}Note that both $\mathcal A_1$ and $\mathcal A_2$ are
unital AF algebras.  Since $dim_{\Bbb C}\mathcal A_1\ge 2$, there is
a C$^*$-subalgebra $I_{\mathcal A_1}\in\mathcal B_1$ of $\mathcal A$
such that $\mathcal B_1\simeq \Bbb C\oplus \Bbb C$. Since $dim_{\Bbb
C}\mathcal A_2\ge 3$, there is a C$^*$-subalgebra $I_{\mathcal
A_2}\in\mathcal B_2$ of $\mathcal A_2$ such that either $\mathcal
B_2\simeq \Bbb C\oplus \Bbb C\oplus \Bbb C$ or $\mathcal B_2\simeq
\mathcal M_2(\Bbb C).$ Now it follows from Proposition  4.1.4, we
know that $
 (\mathcal A_1,\tau_1)*_{red}(\mathcal A_2,\tau_2)
$  is not a quasidiagonal C$^*$-algebra.
\end{proof}

\subsection{BDF extension semigroups of reduced free products of AH
algebras} Suppose $\mathcal A$ is a separable unital C$^*$-algebra.
The invariant $Ext(\mathcal A)$ was introduced by Brown, Douglas and
Fillmore in \cite{BDF}. $Ext(\mathcal A)$ is the set of equivalence
classes $[\pi]$ of  unital $*$-monomorphisms $\pi:\mathcal
A\rightarrow \mathcal{C}(\mathcal H)$, where $\mathcal{C}(\mathcal
H)=B(\mathcal H)/\mathcal K(\mathcal H)$ is the Calkin algebra for a
separable Hilbert space $\mathcal H=l^2(\Bbb Z)$. The equivalence
relation is defined as follows:
$$
\pi_1\sim\pi_2\Leftrightarrow \exists u\in\mathcal U(B(\mathcal H))
\ such \ that \ \forall a\in\mathcal A:
\pi_1(a)=\rho(u)\pi_2(a)\rho(u)^*,
$$ where $\mathcal U(B(\mathcal
H))$ is the unitary group of $B(\mathcal H)$ and $\rho:B(\mathcal
A)\rightarrow \mathcal C(\mathcal H)$ is the quotient map. There is
a natural semigroup structure on $Ext(\mathcal A)$. By a result of
Voiculescu, $Ext(\mathcal A)$ always has a unit. By a result of Choi
and Effros, $Ext(\mathcal A)$ is a group for every separable unital
nuclear C$^*$-algebras $\mathcal A$.  In \cite{Haag}, Haagerup and
Thorbj{\o}rnsen solved a long standing open problem by showing that
$Ext(C_r^*(F_2))$ is not a group.

In this subsection, we consider the BDF extension semigroups of
reduced free products of some unital AH algebras. First we recall a
useful fact, which  can be found in \cite{Br}, \cite{Haag} and
\cite{VoiQusi}. (See also Lemma 2.4 in \cite{HaSh5})
\begin{lemma}
Suppose that $\mathcal A$ is a unital separable MF algebras. If
$\mathcal A$ is not quasidiagonal, then $Ext(\mathcal A)$ is not a
group.
\end{lemma}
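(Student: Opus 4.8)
The plan is to manufacture a single concrete element $[\tau_{\mathrm{mf}}]$ of $Ext(\mathcal A)$ --- the ``matricial field extension'' coming from the MF structure --- and then to show that its invertibility in $Ext(\mathcal A)$ would force $\mathcal A$ to be quasidiagonal. Since $\mathcal A$ is assumed not quasidiagonal, $[\tau_{\mathrm{mf}}]$ cannot be invertible, so $Ext(\mathcal A)$ is not a group. Thus I would argue by contradiction: assume $Ext(\mathcal A)$ is a group and deduce that $\mathcal A$ is quasidiagonal.

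First I would use the MF property (via Lemma 2.4.1, or directly from an embedding $\mathcal A\hookrightarrow\prod_k\mathcal M_{n_k}(\Bbb C)/\sum_k\mathcal M_{n_k}(\Bbb C)$) to extract a sequence of contractive linear maps $\phi_k\colon\mathcal A\to\mathcal M_{n_k}(\Bbb C)$ that is asymptotically unital, asymptotically $*$-preserving, asymptotically multiplicative and asymptotically isometric, i.e. $\|\phi_k(1)-1\|\to 0$, $\|\phi_k(a^*)-\phi_k(a)^*\|\to 0$, $\|\phi_k(ab)-\phi_k(a)\phi_k(b)\|\to 0$ and $\|\phi_k(a)\|\to\|a\|$ for all $a,b$. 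Put $\mathcal H=\bigoplus_k\Bbb C^{n_k}$, let $\Phi=\bigoplus_k\phi_k\colon\mathcal A\to B(\mathcal H)$ be the block-diagonal map, and let $q\colon B(\mathcal H)\to\mathcal C(\mathcal H)$ be the quotient onto the Calkin algebra. The asymptotic properties say precisely that $\tau_{\mathrm{mf}}:=q\circ\Phi$ is a unital $*$-homomorphism, and it is isometric because the essential norm of a block-diagonal operator $\bigoplus_kT_k$ is $\limsup_k\|T_k\|$. So $[\tau_{\mathrm{mf}}]\in Ext(\mathcal A)$, and crucially $\tau_{\mathrm{mf}}$ carries the block-diagonal lift $\Phi$, which commutes exactly with the increasing finite-rank projections $P_N=\bigoplus_{k\le N}1_{\mathcal M_{n_k}}\nearrow 1_{\mathcal H}$.

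Now suppose $Ext(\mathcal A)$ is a group. Then $[\tau_{\mathrm{mf}}]$ has an inverse $[\sigma]$; realizing the sum on $\mathcal H\oplus\mathcal H$ and using that the unit of $Ext(\mathcal A)$ is the class of extensions that lift to a $*$-homomorphism, $\tau_{\mathrm{mf}}\oplus\sigma$ lifts to a unital $*$-homomorphism $\rho\colon\mathcal A\to B(\mathcal H\oplus\mathcal H)$. Compressing by $P=1_{\mathcal H}\oplus 0$ gives a unital completely positive map $\Psi:=P\rho(\cdot)P\colon\mathcal A\to B(\mathcal H)$ with $q\circ\Psi=\tau_{\mathrm{mf}}=q\circ\Phi$; hence $\Psi(a)-\Phi(a)\in\mathcal K(\mathcal H)$ for every $a$. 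Therefore $\Psi$ is completely positive, a $*$-homomorphism modulo $\mathcal K(\mathcal H)$, and block-diagonal modulo $\mathcal K(\mathcal H)$: $\|[P_N,\Psi(a)]\|=\|[P_N,\Psi(a)-\Phi(a)]\|\to 0$. Finally I would compress $\Psi$ to the finite blocks $R_{N,M}=P_M-P_N$ and set $\psi_{N,M}=R_{N,M}\Psi(\cdot)R_{N,M}\colon\mathcal A\to\mathcal M_{d_{N,M}}(\Bbb C)$, a completely positive map of norm $\le\|a\|$. Sending $N\to\infty$ (and choosing $M=M(N)$ suitably) annihilates the \emph{fixed} compact errors $\Psi(a)\Psi(b)-\Psi(ab)$ and the commutator terms $[\Psi(a),R_{N,M}]$, since $\|(1-P_N)\kappa(1-P_N)\|\to 0$ for any fixed compact $\kappa$, while $\max_{N<k\le M}\|\phi_k(a)\|$ can still be pushed up to $\|a\|$ because $\limsup_k\|\phi_k(a)\|=\|a\|$. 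A diagonalization over a dense sequence in $\mathcal A$ then produces a sequence of unital completely positive maps into matrix algebras that is asymptotically multiplicative and asymptotically isometric, so $\mathcal A$ is quasidiagonal by Voiculescu's characterization of quasidiagonality --- contradicting the hypothesis.

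The main obstacle is this last step: the truncations $R_{N,M}$ must be chosen so that asymptotic multiplicativity and asymptotic isometry hold \emph{simultaneously}. Asymptotic multiplicativity forces $N\to\infty$ (to kill the fixed compact discrepancies and the corner commutators), whereas asymptotic isometry requires $(N,M]$ to contain an index $k$ with $\|\phi_k(a)\|$ close to $\|a\|$; reconciling these demands over a dense set of elements at once is the only genuinely delicate point, handled by a standard diagonal argument. A secondary, more bookkeeping-type point is the careful passage from the bare definition of an MF algebra to the linear maps $\phi_k$ enjoying all four asymptotic properties. Of course, one may instead simply cite \cite{Br}, \cite{Haag} or \cite{VoiQusi} for the statement, as is done in the paper.
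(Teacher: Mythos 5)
Your argument is correct, but it is worth saying up front that the paper does not prove this lemma at all: it simply quotes it as a known fact with references to Brown, Haagerup--Thorbj{\o}rnsen, Voiculescu, and Lemma 2.4 of \cite{HaSh5}. What you have written is essentially a self-contained reconstruction of the standard proof from those sources (N.~Brown's generalization of the Haagerup--Thorbj{\o}rnsen argument). Your construction of the class $[\tau_{\mathrm{mf}}]$ is sound: the block-diagonal algebra $\prod_k\mathcal M_{n_k}(\Bbb C)$ meets $\mathcal K(\mathcal H)$ exactly in $\sum_k\mathcal M_{n_k}(\Bbb C)$, so the MF embedding really does induce a unital $*$-monomorphism into the Calkin algebra, and the essential-norm computation $\|q(\bigoplus_k T_k)\|=\limsup_k\|T_k\|$ gives the asymptotic isometry you need later. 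The inversion step correctly uses Voiculescu's absorption theorem (the unit of $Ext$ is the class of liftable extensions), and the truncation $R_{N,M}\Psi(\cdot)R_{N,M}$ handles the only delicate point --- the tension between killing fixed compact errors (which wants $N\to\infty$) and retaining norms (which wants the window $(N,M]$ to capture indices where $\|\phi_k(a)\|$ is near $\|a\|$) --- exactly as in the literature, by a diagonal argument over a dense sequence. You are implicitly relying on three standard external facts, all legitimate: Voiculescu's Weyl--von Neumann theorem, his abstract characterization of quasidiagonality by asymptotically multiplicative, asymptotically isometric u.c.p. maps into matrix algebras (the cited \cite{VoiQusi}), and the liftability of the embedding $\mathcal A\hookrightarrow\prod_k\mathcal M_{n_k}(\Bbb C)/\sum_k\mathcal M_{n_k}(\Bbb C)$ to a u.c.p. map into $\prod_k\mathcal M_{n_k}(\Bbb C)$ (available here because the ideal has a central approximate unit of projections). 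What your approach buys over the paper's bare citation is an explicit identification of the non-invertible class and a transparent reason why invertibility would force quasidiagonality; the cost is that the proof is only as elementary as the two Voiculescu theorems it invokes.
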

By Theorem 3.3.2  (or Theorem 3.4.1), Theorem 4.1.1 and Lemma 4.2.1,
we have the following result.
\begin{theorem}
Suppose that $\mathcal A_1$  and $\mathcal A_2$   are unital
separable  AH (or  ASH) algebras with faithful tracial states
$\tau_1$, and $\tau_2$ respectively. If $\mathcal A_1$ and $\mathcal
A_2$ satisfy Avitzour's condition, {\em  i.e. there are unitaries
$u\in \mathcal A_1$ and $v,w\in\mathcal A_2$ such that $$
\tau_1(u)=\tau_2(v)=\tau_2(w)=\tau_2(w^*v)=0,
$$}{then
$$
Ext \big ((\mathcal A_1,\tau_1)*_{red}(\mathcal A_2,\tau_2) \big )
\qquad \text{is not a group.}
$$
}
\end{theorem}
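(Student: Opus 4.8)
The plan is to derive the statement by stitching together three results already established in the paper: the reduced free product in question is an MF algebra, it is not quasidiagonal, and for unital separable C$^*$-algebras these two facts together force the BDF extension semigroup not to be a group. So the proof is a deduction, and the substantive work lives upstream.

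First I would apply Theorem 3.3.2 (or, in the ASH case, Theorem 3.4.1): since $\mathcal A_1$ and $\mathcal A_2$ are unital separable AH (resp.\ ASH) algebras carrying faithful tracial states $\tau_1,\tau_2$, the reduced free product $(\mathcal A_1,\tau_1)*_{red}(\mathcal A_2,\tau_2)$ is an MF algebra; it is plainly unital and separable as well. This single invocation packages the whole reduction chain of Section~3 --- from Haagerup and Thorbj{\o}rnsen's theorem that $C_r^*(F_n)$ is MF (Lemma~3.2.1), through the matrix-algebra case via crossed products (Lemma~3.2.3, Proposition~3.2.1), the finite-dimensional case (Lemma~3.3.2), and the local-AH approximation argument (Lemma~3.3.1, Lemma~3.3.3, Theorem~3.3.2).

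Second I would invoke Theorem~4.1.1: Avitzour's condition supplies unitaries $u\in\mathcal A_1$ and $v,w\in\mathcal A_2$ with $\tau_1(u)=\tau_2(v)=\tau_2(w)=\tau_2(w^*v)=0$, and Theorem~4.1.1 then guarantees that $(\mathcal A_1,\tau_1)*_{red}(\mathcal A_2,\tau_2)$ is not quasidiagonal. Concretely, one checks (as in the proof of Theorem~4.1.1) that $a=uvuv$ and $b=uwuw$ are free Haar unitaries with respect to $\tau_1*\tau_2$, so they generate a copy of $C_r^*(F_2)$ inside the free product; since a C$^*$-subalgebra of a separable quasidiagonal C$^*$-algebra is quasidiagonal, Rosenberg's theorem that $C_r^*(F_2)$ is not quasidiagonal passes up to the free product.

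Finally, writing $\mathcal A := (\mathcal A_1,\tau_1)*_{red}(\mathcal A_2,\tau_2)$, we have a unital separable MF algebra that is not quasidiagonal, so Lemma~4.2.1 (the Brown--Haagerup--Thorbj{\o}rnsen--Voiculescu criterion) applies verbatim and yields that $Ext(\mathcal A)$ is not a group, which is exactly the assertion. Since all three ingredients are already proved, the only care needed here is routine bookkeeping: confirming that the free product is unital and separable and that the hypotheses of Theorem~3.3.2/3.4.1 and Theorem~4.1.1 are literally met. The genuine obstacle is not in this deduction at all but in Theorem~3.3.2, whose proof ultimately rests on Haagerup and Thorbj{\o}rnsen's random-matrix analysis showing $C_r^*(F_n)$ is an MF algebra.
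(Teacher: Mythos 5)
Your proposal is correct and is exactly the paper's argument: the theorem is stated immediately after the sentence ``By Theorem 3.3.2 (or Theorem 3.4.1), Theorem 4.1.1 and Lemma 4.2.1, we have the following result,'' which is precisely the three-step deduction (MF, non-quasidiagonal via Avitzour's condition, then the Brown criterion) that you give.
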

By Theorem 3.3.2 (or Theorem 3.4.1), Proposition 4.1.3 and Lemma
4.2.1, we have the following result.
\begin{theorem}
Let $\mathcal A$ and $\mathcal B\ne \Bbb C$ be unital separable AH
(or  ASH) algebras with faithful traces $\phi$, and $\psi$
respectively. If $\mathcal A$ is partially diffuse in the sense of
Definition 4.1.1, then
$$
Ext \big ((\mathcal A,\phi)*_{red}(\mathcal B,\psi) \big ) \qquad
\text{is not a group.}
$$
\end{theorem}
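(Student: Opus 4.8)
The plan is to obtain Theorem 4.2.2 as the conjunction of three results already established in this paper, with no new computation required. First, since $\phi$ and $\psi$ are faithful tracial states on the unital separable AH (respectively ASH) algebras $\mathcal A$ and $\mathcal B$, Theorem 3.3.2 (respectively Theorem 3.4.1) applies verbatim and shows that
$$
(\mathcal A,\phi)*_{red}(\mathcal B,\psi)
$$
is a Blackadar--Kirchberg MF algebra. It is unital and separable, being a reduced free product of unital separable C$^*$-algebras acting on the (separable) Hilbert-space free product of the two GNS spaces.

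Second, I would invoke Proposition 4.1.3 with $(\mathcal A,\phi)$ partially diffuse and $\mathcal B\ne\Bbb C$. This immediately gives that $(\mathcal A,\phi)*_{red}(\mathcal B,\psi)$ is not a quasidiagonal C$^*$-algebra. Nothing further must be checked, since $\mathcal A$ is by hypothesis partially diffuse in the sense of Definition 4.1.1; internally Proposition 4.1.3 uses Theorem 4.1.2 to extract a unital C$^*$-subalgebra of $\mathcal A$ isomorphic to $(C(\Bbb T),\tau)$, restricts to a suitable corner, and then applies Propositions 4.1.1--4.1.2.

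Third, with the free product now known to be a unital separable MF algebra that is not quasidiagonal, Lemma 4.2.1 (Brown's criterion) closes the argument: such an algebra has a BDF extension semigroup that is not a group. Hence $Ext\big((\mathcal A,\phi)*_{red}(\mathcal B,\psi)\big)$ is not a group, which is exactly the assertion.

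The genuine content is not in this assembly but in the two inputs. The MF property (Theorem 3.3.2 / Theorem 3.4.1) ultimately rests on Haagerup and Thorbj{\o}rnsen's theorem that $C_r^*(F_n)$ is MF, pushed through the crossed-product and finite-dimensional approximation arguments of Section 3; the non-quasidiagonality (Proposition 4.1.3) rests on Voiculescu's free-entropy-dimension estimate (the relevant subalgebra, being a free product of a diffuse abelian algebra with something $\ne\Bbb C$, has $\delta_0>1$) together with Brown's theorem that a quasidiagonal exact C$^*$-algebra with unique trace has hyperfinite enveloping von Neumann algebra. So if one had to prove the statement without those tools, the main obstacle would be precisely the interaction between free entropy dimension and quasidiagonality; granting the earlier results, the proof of Theorem 4.2.2 is a three-step combination of Theorem 3.3.2 (or Theorem 3.4.1), Proposition 4.1.3, and Lemma 4.2.1.
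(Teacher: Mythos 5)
Your proposal is correct and is exactly the paper's own argument: the authors derive Theorem 4.2.2 by citing Theorem 3.3.2 (or Theorem 3.4.1) for the MF property, Proposition 4.1.3 for non-quasidiagonality, and Lemma 4.2.1 to conclude that the extension semigroup is not a group. Your additional remarks on where the real content lies (Haagerup--Thorbj{\o}rnsen for MF, and the free entropy dimension argument behind Proposition 4.1.3) accurately reflect the structure of the paper.
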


By Theorem 3.3.2  (or Theorem 3.4.1), Theorem 4.1.3 and Lemma 4.2.1,
we have the following result.
\begin{theorem}
Suppose that $\mathcal A $  and $\mathcal B$  are unital separable
AF algebras with faithful tracial states $\phi$, and $\psi$
respectively. If  $dim_{\Bbb C}\mathcal A\ge 2$ and $dim_{\Bbb
C}\mathcal B\ge 3$, then
$$
Ext \big ((\mathcal A,\phi)*_{red}(\mathcal B,\psi) \big )  \qquad
\text{is not a group.}
$$
\end{theorem}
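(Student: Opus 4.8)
The plan is to combine the MF property of the reduced free product, established in Section 3, with its non-quasidiagonality, established in Theorem 4.1.3, and then to invoke the Brown--Voiculescu criterion recorded in Lemma 4.2.1. The first ingredient is immediate: every unital separable AF algebra is in particular a unital separable AH algebra (each finite-dimensional C$^*$-algebra being homogeneous), so Theorem 3.3.2 applies; in fact Theorem 3.3.1, which is phrased directly for AF inputs, already gives that $(\mathcal A,\phi)*_{red}(\mathcal B,\psi)$ is an MF algebra. This secures the MF hypothesis of Lemma 4.2.1.

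For the second ingredient I would check that the dimension bounds supply the finite-dimensional subalgebras demanded by Theorem 4.1.3. Since $\mathcal A$ is AF and $\dim_{\Bbb C}\mathcal A\ge 2$, it contains a unital C$^*$-subalgebra isomorphic to $\Bbb C\oplus\Bbb C$; since $\mathcal B$ is AF and $\dim_{\Bbb C}\mathcal B\ge 3$, it contains a unital C$^*$-subalgebra isomorphic either to $\Bbb C\oplus\Bbb C\oplus\Bbb C$ or to $\mathcal M_2(\Bbb C)$. Hence Theorem 4.1.3 (equivalently, Proposition 4.1.4 with these subalgebras as $\mathcal B_1$ and $\mathcal B_2$) applies and $(\mathcal A,\phi)*_{red}(\mathcal B,\psi)$ is not quasidiagonal. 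Having a unital separable MF algebra that fails to be quasidiagonal, Lemma 4.2.1 then yields that $Ext\big((\mathcal A,\phi)*_{red}(\mathcal B,\psi)\big)$ is not a group, which is the assertion.

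I do not expect a genuine obstacle here: the only point needing attention is the routine verification that the hypotheses of Theorems 3.3.1/3.3.2 and 4.1.3 are met, and the substantive work has already been carried out elsewhere. The MF conclusion ultimately rests on Haagerup and Thorbj{\o}rnsen's theorem that $C_r^*(F_2)$ is an MF algebra together with the permanence arguments of Section 3, while the non-quasidiagonality rests on the partial-diffuseness constructions of Lemmas 4.1.3 and 4.1.4 and Proposition 4.1.4, and, through Proposition 4.1.3, on Voiculescu's free entropy dimension estimates distinguishing the relevant von Neumann algebra from the hyperfinite one. The present statement is merely the formal synthesis of these inputs with Lemma 4.2.1.
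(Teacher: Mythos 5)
Your proposal is correct and follows exactly the paper's own route: Theorem 3.3.1/3.3.2 gives the MF property, Theorem 4.1.3 (via Proposition 4.1.4 and the existence of unital subalgebras $\Bbb C\oplus\Bbb C$ and $\Bbb C\oplus\Bbb C\oplus\Bbb C$ or $\mathcal M_2(\Bbb C)$ supplied by the dimension hypotheses) gives non-quasidiagonality, and Lemma 4.2.1 then yields that the extension semigroup is not a group. No gaps.
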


\begin{example}
Let $\mathcal A$ and $\mathcal B $ be irrational C$^*$-algebras, or
UHF algebras, with faithful traces $\phi$, and $\psi$ respectively.
 Then
$ Ext \big ((\mathcal A,\phi)*_{red}(\mathcal B,\psi) \big ) \
\text{is not a group.} $
\end{example}

\section{Reduced Free Products of Tensor Products of Unital
C$^*$-algebras}

In this section, we will discuss some generalizations of the results
we obtained in the previous sections. Most of the results obtained
in this section are parallel to the ones in section 3 and their
proofs are also similar. Thus we skip most of the proofs of the
results in this section and sketched them only if necessary.

 The following
notation will be used in this section. Suppose that $G$ is a
countable discrete group. We will denote $C_r^* (G)$ the reduced
group C$^*$-algebra of $G$ and $\tau_G$ the canonical tracial state
of $C_r^* (G)$.
\subsection{A class of MF algebras}
\begin{definition}
Let $\mathcal S$ be the set of all these pairs $(\mathcal
A,\phi)$ such that $\mathcal A$ is a separable unital
C$^*$-algebra and $\psi$ is  a faithful tracial state of $\mathcal
A$ satisfying $(\mathcal A,\psi)*_{red}(C_r^*(F_n),\tau_{F_n})$
is an MF algebra for every integer $n\ge 1$.
\end{definition}
By Theorem 3.3.3, we have the following result.
\begin{proposition}
Suppose that $\mathcal A$ is a unital separable AH algebra and
$\psi$ is a faithful trace of $\mathcal A$. Then
$$
  (\mathcal A,\psi)\in\mathcal S,
$$  where $\mathcal S$
is defined in Definition 5.1.1.
\end{proposition}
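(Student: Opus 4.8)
The plan is to deduce this directly from Theorem 3.3.3 by rewriting $(\mathcal A,\psi)*_{red}(C_r^*(F_n),\tau_{F_n})$ as a reduced free product of finitely many unital separable AH algebras with respect to faithful tracial states. The only point to be careful about is that one cannot feed the pair $(\mathcal A,C_r^*(F_n))$ into Theorem 3.3.3 as it stands, since $C_r^*(F_n)$ is not an AH algebra for $n\ge 2$ (indeed it is not even quasidiagonal); one must first decompose $C_r^*(F_n)$ into copies of $C(\Bbb T)$.

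Concretely, I would proceed as follows. First, recall from the Introduction (stated there for $n=2$, but valid verbatim for every $n\ge 1$) that $F_n=\Bbb Z*\cdots*\Bbb Z$ and hence
$$
(C_r^*(F_n),\tau_{F_n})=(C_r^*(\Bbb Z),\tau_{\Bbb Z})*_{red}\cdots*_{red}(C_r^*(\Bbb Z),\tau_{\Bbb Z})
$$
($n$ copies), where $\tau_{\Bbb Z}$ is the canonical tracial state on $C_r^*(\Bbb Z)$. Second, observe that $C_r^*(\Bbb Z)\cong C(\Bbb T)$ is a unital separable homogeneous C$^*$-algebra (namely $\mathcal M_1(\Bbb C)\otimes C(\Bbb T)$), hence a unital separable AH algebra, and that under this isomorphism $\tau_{\Bbb Z}$ becomes the faithful tracial state of $C(\Bbb T)$ induced by normalized Lebesgue measure on $\Bbb T$. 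Third, invoke the associativity of Voiculescu's reduced free product construction of Section 2.3 to identify
$$
(\mathcal A,\psi)*_{red}(C_r^*(F_n),\tau_{F_n})\;\cong\;(\mathcal A,\psi)*_{red}(C(\Bbb T),\tau)*_{red}\cdots*_{red}(C(\Bbb T),\tau),
$$
an $(n+1)$-fold reduced free product of unital separable AH algebras with respect to faithful tracial states; here the hypothesis that $\psi$ is faithful is precisely what makes the $\mathcal A$-factor admissible. By Theorem 3.3.3 this C$^*$-algebra is an MF algebra. Since $n\ge 1$ was arbitrary, $(\mathcal A,\psi)\in\mathcal S$, which is the assertion of Proposition 5.1.1.

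In truth there is no serious obstacle here: the statement is essentially a one-line corollary of Theorem 3.3.3. The only thing requiring a word is the associativity of the reduced free product, i.e. the fact that $(C_r^*(F_n),\tau_{F_n})$ splits as above with the correct product state $\tau_{\Bbb Z}*\cdots*\tau_{\Bbb Z}$; this is a standard feature of the construction recalled in Section 2.3, and if one prefers it can be verified by hand by inspecting the Hilbert-space free product $L^2(\mathcal A,\psi)*L^2(C(\Bbb T),\tau)*\cdots*L^2(C(\Bbb T),\tau)$ directly.
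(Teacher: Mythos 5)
Your argument is correct and is exactly the route the paper intends: the paper's entire proof is the citation ``By Theorem 3.3.3,'' and your decomposition $(C_r^*(F_n),\tau_{F_n})\cong (C(\Bbb T),\tau)*_{red}\cdots*_{red}(C(\Bbb T),\tau)$ together with associativity of the reduced free product is the standard way to make that citation legitimate, since $C_r^*(F_n)$ itself is not AH. No gaps; the observations that $C(\Bbb T)\cong C_r^*(\Bbb Z)$ is homogeneous (hence AH) and that all the traces involved are faithful are precisely the points that need checking.
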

\subsection{Minimal tensor products of unital C$^*$-algebras with faithful traces} In
this subsection, we will recall the definition of minimal tensor
product  of two unital C$^*$-algebras when both C$^*$-algebras have
faithful traces.

 Suppose that $\mathcal A_i$, $i=1,2$, are
unital C$^*$-algebras with faithful traces $\psi_i$. Each element
$a_i$ in $\mathcal A_i$ corresponds to a vector $\hat a_i$ in
$\mathcal H_i=L^2(\mathcal A_i, \psi_i)$. Let
$$\rho_i:\mathcal A_i\rightarrow B(\mathcal H_i)=B(L^2(\mathcal
A_i,\psi_i))$$ be the GNS representation of $\mathcal A_i$ such that
$$\psi_i(a_i)=\langle \rho_i(a_i) \hat{I}_{\mathcal A_i},
\hat{I}_{\mathcal A_i}\rangle, \qquad \forall \ a_i\in\mathcal
A_i.$$ Then the C$^*$-subalgebra generated by
$$\{\rho_1(a_1)\otimes I_{\mathcal H_2},   I_{\mathcal H_1}\otimes
\rho_2(a_2) \ | \ a_i\in\mathcal A_i, i=1,2\}
$$ in $B(\mathcal H_1\otimes \mathcal H_2)$ is the minimal tensor
product of $\mathcal A$ and $\mathcal B$, and is denoted by
$\mathcal A\otimes_{min}\mathcal B$.

Moreover, there is a canonical vector state
$\psi=\psi_1\otimes_{min} \psi_2$ defined on $\mathcal
A\otimes_{min}\mathcal B$ as follows:
$$
\psi(T) = \langle T (\hat I_{\mathcal A_1}\otimes \hat I_{\mathcal
A_2}), \hat I_{\mathcal A_1}\otimes \hat I_{\mathcal A_2}\rangle,
\ \  \ \forall \ T\in \mathcal A\otimes_{min}\mathcal B.
$$ If both $\psi_1,\psi_2$ are faithful traces of $\mathcal A_i$, then
$\psi=\psi_1\otimes_{min} \psi_2$ is also a faithful trace of
$\mathcal A\otimes_{min}\mathcal B$ (for example see \cite{Avit}).
And,
$$(id_{\mathcal A\otimes_{min}\mathcal B},  {\mathcal H_1}\otimes
{\mathcal H_2}, \hat I_{\mathcal A_1}\otimes \hat I_{\mathcal
A_2})$$ is a GNS representation of $(\mathcal A\otimes_{min}\mathcal
B,\psi_1\otimes_{min} \psi_2)$.

 Using the discussion as above and following the same strategy as in
 Lemma 3.1.2, we can prove the following result, whose proof is
 skipped.

\begin{lemma} Suppose that $\mathcal A$ is a separable
unital C$^*$-algebras with a faithful trace  $\psi$. Let
$\mathcal H=L^2(\mathcal A,\phi)$. Suppose that $\mathcal B$ is a
finite dimensional C$^*$-algebras with a basis $1,b_1,\ldots,
b_{d-1}$, where $d$ is the complex dimension of $\mathcal B$.
Suppose that $\{\tau,\tau_\gamma\}_{\gamma=1}^\infty$ is a family
of faithful tracial states of $\mathcal B$ satisfying $$
\lim_{\gamma\rightarrow \infty}\tau_{\gamma}(b)=\tau(b) \qquad
\forall \ b\in\mathcal B .$$

Let $\Bbb C^d$ be a $d$-dimensional complex Hilbert space with an
orthonormal basis $e_1,\ldots, e_d$. Then there is a sequence of
faithful unital $*$-representations $\rho_{\tau}, \rho_{\tau_\gamma}
:\mathcal A\otimes_{min}\mathcal B\rightarrow B(\mathcal
H)\otimes_{min} \mathcal M_d(\Bbb C)$ of $\mathcal
A\otimes_{min}\mathcal B$ on $\mathcal H\otimes\Bbb C^d$ for
$\gamma=1,2\ldots$ such that
\begin{enumerate}
 \item [(i)] $(\rho_{\tau}, \mathcal H\otimes\Bbb C^d,\hat I_{\mathcal A}\otimes e_1)$
  and  $(\rho_{\tau_\gamma}, \mathcal H\otimes\Bbb C^d,\hat I_{\mathcal A}\otimes e_1)$
   are GNS representations
 of $(\mathcal A\otimes_{min}\mathcal B,\psi\otimes_{min}  \tau)$, and $ (\mathcal A\otimes_{min}\mathcal B, \psi\otimes_{min} \tau_\gamma)$
 respectively.
   \item [(ii)] For each $1\le i\le d-1$,
   $$
\lim_{\gamma\rightarrow \infty} \|\rho_{\tau_\gamma}(a\otimes
b_i)-\rho_{\tau}(a\otimes b_i)\|=0, \qquad \forall \ a\in
\mathcal A
   $$
   \end{enumerate}
\end{lemma}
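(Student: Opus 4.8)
The plan is to realize both $\rho_\tau$ and $\rho_{\tau_\gamma}$ as external tensor products, keeping the $\mathcal A$-leg fixed and letting only the (finite-dimensional) $\mathcal B$-leg vary, so that all the analytic content is imported from Lemma 3.1.2 — which is precisely the $\mathcal A=\Bbb C$ case of the present statement. First I would fix the GNS representation $(\rho^{\mathcal A},\mathcal H,\hat I_{\mathcal A})$ of $(\mathcal A,\psi)$ on $\mathcal H=L^2(\mathcal A,\psi)$, so that $\psi(a)=\langle\rho^{\mathcal A}(a)\hat I_{\mathcal A},\hat I_{\mathcal A}\rangle$ and $\overline{\rho^{\mathcal A}(\mathcal A)\hat I_{\mathcal A}}=\mathcal H$. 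Then I would apply Lemma 3.1.2 to $\mathcal B$, its basis $1,b_1,\ldots,b_{d-1}$, and the family $\{\tau,\tau_\gamma\}$ to obtain faithful unital $*$-representations $\rho^{\mathcal B}_\tau,\rho^{\mathcal B}_{\tau_\gamma}:\mathcal B\to\mathcal M_d(\Bbb C)$ on $\Bbb C^d$ with $(\rho^{\mathcal B}_\tau,\Bbb C^d,e_1)$ and $(\rho^{\mathcal B}_{\tau_\gamma},\Bbb C^d,e_1)$ GNS representations of $(\mathcal B,\tau)$ and $(\mathcal B,\tau_\gamma)$, and with $\|\rho^{\mathcal B}_{\tau_\gamma}(b_i)-\rho^{\mathcal B}_\tau(b_i)\|\to 0$ as $\gamma\to\infty$ for $1\le i\le d-1$. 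I would then set $\rho_\tau=\rho^{\mathcal A}\otimes\rho^{\mathcal B}_\tau$ and $\rho_{\tau_\gamma}=\rho^{\mathcal A}\otimes\rho^{\mathcal B}_{\tau_\gamma}$, each acting on $\mathcal H\otimes\Bbb C^d$ with range in $B(\mathcal H)\otimes_{min}\mathcal M_d(\Bbb C)$; note the $\mathcal A$-leg is the same for every $\gamma$.

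Next I would verify the three requirements. Unitality is immediate. For faithfulness: the external tensor product of $*$-homomorphisms is a $*$-homomorphism on the algebraic tensor product, and the tensor product of faithful $*$-representations is isometric for the spatial norm — this is the defining property of $\otimes_{min}$ — so each $\rho_\tau,\rho_{\tau_\gamma}$ extends to a faithful unital $*$-representation of $\mathcal A\otimes_{min}\mathcal B$. For property (i): since $\overline{\rho^{\mathcal A}(\mathcal A)\hat I_{\mathcal A}}=\mathcal H$ and $\rho^{\mathcal B}_\tau(\mathcal B)e_1=\Bbb C^d$ (a dense subspace of the finite-dimensional GNS space is all of it), the closed linear span of the vectors $\rho^{\mathcal A}(a)\hat I_{\mathcal A}\otimes\rho^{\mathcal B}_\tau(b)e_1$ is all of $\mathcal H\otimes\Bbb C^d$, so $\hat I_{\mathcal A}\otimes e_1$ is cyclic; and on elementary tensors
$$\langle\rho_\tau(a\otimes b)(\hat I_{\mathcal A}\otimes e_1),\hat I_{\mathcal A}\otimes e_1\rangle=\psi(a)\,\tau(b)=(\psi\otimes_{min}\tau)(a\otimes b),$$
whence by linearity and continuity $(\rho_\tau,\mathcal H\otimes\Bbb C^d,\hat I_{\mathcal A}\otimes e_1)$ is a GNS representation of $(\mathcal A\otimes_{min}\mathcal B,\psi\otimes_{min}\tau)$; the argument for $(\rho_{\tau_\gamma},\mathcal H\otimes\Bbb C^d,\hat I_{\mathcal A}\otimes e_1)$ and $\psi\otimes_{min}\tau_\gamma$ is verbatim the same. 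For property (ii), because the $\mathcal A$-leg is unchanged,
$$\|\rho_{\tau_\gamma}(a\otimes b_i)-\rho_\tau(a\otimes b_i)\|=\|\rho^{\mathcal A}(a)\otimes(\rho^{\mathcal B}_{\tau_\gamma}(b_i)-\rho^{\mathcal B}_\tau(b_i))\|=\|\rho^{\mathcal A}(a)\|\,\|\rho^{\mathcal B}_{\tau_\gamma}(b_i)-\rho^{\mathcal B}_\tau(b_i)\|\le\|a\|\,\|\rho^{\mathcal B}_{\tau_\gamma}(b_i)-\rho^{\mathcal B}_\tau(b_i)\|,$$
which tends to $0$ as $\gamma\to\infty$ by Lemma 3.1.2(ii).

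The only delicate point — and what I would flag as the main (mild) obstacle — is ensuring that the external tensor products genuinely land in and are faithful on $\mathcal A\otimes_{min}\mathcal B$, i.e.\ that one is using the spatial completion rather than some other C$^*$-completion of the algebraic tensor product; this is handled by the standard fact that the minimal tensor norm is computed by any pair of faithful representations, together with the identification (recalled just before the lemma) of $\Bbb C^d$ with $L^2(\mathcal B,\tau)$, under which $\rho_\tau$ is exactly the GNS representation of $(\mathcal A\otimes_{min}\mathcal B,\psi\otimes_{min}\tau)$ described there. Everything else is pure bookkeeping, and the write-up can be made to follow the proof of Lemma 3.1.2 line by line.
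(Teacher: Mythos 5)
Your proof is correct and is exactly the argument the paper intends: the authors skip the proof, remarking only that it follows from the preceding discussion (which identifies the GNS representation of $(\mathcal A\otimes_{min}\mathcal B,\psi\otimes_{min}\tau)$ with the tensor product of the individual GNS representations) combined with the strategy of Lemma 3.1.2, and your construction $\rho_\tau=\rho^{\mathcal A}\otimes\rho^{\mathcal B}_\tau$ with the norm estimate $\|\rho^{\mathcal A}(a)\otimes(\rho^{\mathcal B}_{\tau_\gamma}(b_i)-\rho^{\mathcal B}_\tau(b_i))\|=\|a\|\,\|\rho^{\mathcal B}_{\tau_\gamma}(b_i)-\rho^{\mathcal B}_\tau(b_i)\|$ is precisely that. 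The points you flag (faithfulness on the spatial completion, cyclicity of $\hat I_{\mathcal A}\otimes e_1$) are handled correctly.
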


The proof of the following result is similar to Lemma 3.1.3 and is
skipped.

\begin{lemma}

  Suppose that $\mathcal A_i$, $i=1,2,$ is a separable unital C$^*$-algebra
     with a faithful tracial state $\psi_i$. Suppose that $\mathcal B$
     is a finite dimensional C$^*$-algebra with a family
     $\{\tau,\tau_{\gamma}\}_{\gamma=1}^\infty$ of faithful tracial
     states of $\mathcal B$ such that
     $$
           \lim_{\gamma\rightarrow
           \infty}\tau_{\gamma}(b)=\tau(b),\qquad \forall \
           b\in\mathcal B.
     $$

     Suppose that $x_1,\ldots, x_n$ is a family of elements in $ (\mathcal A_1,\psi)*_{red}(\mathcal
     B\otimes_{min}\mathcal A_2,\tau\otimes_{min} \psi_2)$.
     Then, for any $\epsilon>0$, there is a $\gamma_0>0$ such that
     $$
        \{ x_1,\ldots, x_n\}
         \subseteq_\epsilon  (\mathcal A_1,\psi)*_{red}(\mathcal
     B\otimes_{min}\mathcal A_2,\tau_\gamma\otimes_{min} \psi_2), \qquad \forall \ \gamma>\gamma_0.
     $$
\end{lemma}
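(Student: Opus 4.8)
The plan is to run the argument of Lemma 3.1.3 essentially verbatim, with the finite-dimensional factor $\mathcal B$ there replaced by $\mathcal B\otimes_{min}\mathcal A_2$ and Lemma 3.1.2 replaced by Lemma 5.2.1. First I would fix a basis $1,b_1,\ldots,b_{d-1}$ of $\mathcal B$ (with $d=dim_{\Bbb C}\mathcal B$) and an orthonormal basis $e_1,\ldots,e_d$ of $\Bbb C^d$, and apply Lemma 5.2.1 with its ``$\mathcal A$'' taken to be $\mathcal A_2$ and its faithful trace $\psi_2$ --- composing with the flip $*$-isomorphism $\mathcal B\otimes_{min}\mathcal A_2\simeq\mathcal A_2\otimes_{min}\mathcal B$ --- to produce a single Hilbert space $\mathcal K_2=L^2(\mathcal A_2,\psi_2)\otimes\Bbb C^d$, a single unit vector $\eta_2=\hat I_{\mathcal A_2}\otimes e_1$, and faithful unital $*$-representations $\rho_\tau,\rho_{\tau_\gamma}:\mathcal B\otimes_{min}\mathcal A_2\to B(\mathcal K_2)$, $\gamma=1,2,\ldots$, for which $(\rho_\tau,\mathcal K_2,\eta_2)$ and $(\rho_{\tau_\gamma},\mathcal K_2,\eta_2)$ are GNS representations of $(\mathcal B\otimes_{min}\mathcal A_2,\tau\otimes_{min}\psi_2)$ and of $(\mathcal B\otimes_{min}\mathcal A_2,\tau_\gamma\otimes_{min}\psi_2)$, respectively. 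Since the GNS construction of a minimal tensor product with respect to a tensor trace is the tensor product of the individual GNS constructions (the relevant space being $L^2(\mathcal A_2,\psi_2)\otimes L^2(\mathcal B,\tau_\gamma)$), the $\mathcal A_2$-part of these representations does not depend on the trace used on $\mathcal B$; so I would record that $\rho_\tau(1\otimes a)=\rho_{\tau_\gamma}(1\otimes a)$ for all $a\in\mathcal A_2$ and all $\gamma$, and that $\|\rho_{\tau_\gamma}(b_i\otimes1)-\rho_\tau(b_i\otimes1)\|\to0$ for $1\le i\le d-1$ (the latter being Lemma 5.2.1(ii) evaluated at the unit of $\mathcal A_2$).

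Next, exactly as in Section 2.3 and in the proof of Lemma 3.1.3, I would take the GNS representation $(\pi_1,\mathcal H_1,\xi_1)$ of $(\mathcal A_1,\psi_1)$, form the Hilbert-space free product $\mathcal H=(\mathcal H_1,\xi_1)*(\mathcal K_2,\eta_2)$ together with the unitaries $V_1,V_2$ of that section, and define on $\mathcal H$
$$
\lambda(a)=V_1(\pi_1(a)\otimes I_{\mathcal H(1)})V_1^*,\qquad \lambda(y)=V_2(\rho_\tau(y)\otimes I_{\mathcal H(2)})V_2^*
$$
for $a\in\mathcal A_1$ and $y\in\mathcal B\otimes_{min}\mathcal A_2$, along with $\lambda_\gamma$ given by the same formulas but with $\rho_{\tau_\gamma}$ in place of $\rho_\tau$. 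Because $\mathcal K_2$ and $\eta_2$ --- hence $\mathcal H$, the spaces $\mathcal H(i)$ and the $V_i$ --- are independent of $\gamma$, the definition of the reduced free product identifies $(\mathcal A_1,\psi_1)*_{red}(\mathcal B\otimes_{min}\mathcal A_2,\tau\otimes_{min}\psi_2)$ with the unital C$^*$-algebra generated by $\{\lambda(a)\}\cup\{\lambda(y)\}$ in $B(\mathcal H)$, and $(\mathcal A_1,\psi_1)*_{red}(\mathcal B\otimes_{min}\mathcal A_2,\tau_\gamma\otimes_{min}\psi_2)$ with the one generated by $\{\lambda_\gamma(a)\}\cup\{\lambda_\gamma(y)\}$. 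From the first paragraph, $\lambda$ and $\lambda_\gamma$ agree on $\mathcal A_1$ and on $1\otimes\mathcal A_2$, while $\|\lambda(b_i\otimes1)-\lambda_\gamma(b_i\otimes1)\|=\|\rho_\tau(b_i\otimes1)-\rho_{\tau_\gamma}(b_i\otimes1)\|\to0$ for each $i$.

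Finally, since $\{1\otimes a:a\in\mathcal A_2\}\cup\{b_i\otimes1:1\le i\le d-1\}$ generates the C$^*$-algebra $\mathcal B\otimes_{min}\mathcal A_2$, each $x_j$ lies in the unital C$^*$-algebra generated in $B(\mathcal H)$ by $\{\lambda(a):a\in\mathcal A_1\}$, $\{\lambda(1\otimes a):a\in\mathcal A_2\}$ and $\{\lambda(b_i\otimes1):1\le i\le d-1\}$; so I can pick $a_1,\ldots,a_N\in\mathcal A_1$, $c_1,\ldots,c_M\in\mathcal A_2$ and noncommutative polynomials $P_1,\ldots,P_n$ with
$$
\big\|x_j-P_j\big(\lambda(a_1),\ldots,\lambda(a_N),\lambda(1\otimes c_1),\ldots,\lambda(1\otimes c_M),\lambda(b_1\otimes1),\ldots,\lambda(b_{d-1}\otimes1)\big)\big\|\le\epsilon/3
$$
for all $1\le j\le n$. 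Letting $y_j$ denote the same expression with every $\lambda$ replaced by $\lambda_\gamma$, one has $y_j\in(\mathcal A_1,\psi_1)*_{red}(\mathcal B\otimes_{min}\mathcal A_2,\tau_\gamma\otimes_{min}\psi_2)$, and by norm-continuity of polynomial evaluation on bounded sets together with the three facts at the end of the previous paragraph there is a $\gamma_0$ such that $\|x_j-y_j\|\le\epsilon$ for all $j$ and all $\gamma>\gamma_0$. Since both reduced free products sit in $B(\mathcal H)$ as concrete C$^*$-algebras, the identity maps are faithful unital $*$-representations of them on the common space $\mathcal H$, so Definition 3.3.1 gives $\{x_1,\ldots,x_n\}\subseteq_\epsilon(\mathcal A_1,\psi_1)*_{red}(\mathcal B\otimes_{min}\mathcal A_2,\tau_\gamma\otimes_{min}\psi_2)$ for every $\gamma>\gamma_0$, which is the assertion. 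The one step I expect to need genuine care --- essentially the only place where this is more than transcribing the proof of Lemma 3.1.3 --- is verifying that the $\mathcal A_2$-part of $\rho_{\tau_\gamma}$ does not move with $\gamma$, so that only the finitely many generators $b_1\otimes1,\ldots,b_{d-1}\otimes1$ change; this is immediate from the tensor-product form of the GNS construction underlying Lemma 5.2.1.
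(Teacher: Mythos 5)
Your proposal is correct and is precisely the argument the paper intends: the authors explicitly skip the proof of this lemma, stating it is "similar to Lemma 3.1.3," and you carry out exactly that transcription, with Lemma 5.2.1 supplying the $\gamma$-independent GNS space $L^2(\mathcal A_2,\psi_2)\otimes\Bbb C^d$ in place of Lemma 3.1.2. The one point you flag as needing care --- that the $\mathcal A_2$-part of $\rho_{\tau_\gamma}$ is independent of $\gamma$, so only the finitely many generators $b_i\otimes 1$ move --- is correctly resolved by the tensor-product form of the GNS construction described in Section 5.2.
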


\subsection{Some conclusions}

Suppose that $(\mathcal A,\psi)\in \mathcal S$, where $\mathcal S$
is defined in Definition 5.1.1. Then $(\mathcal
A,\psi)*_{red}(C_r^*(F_n),\tau_{F_n})$ is an MF algebra for all
$n\ge 2$. Consider an action $\alpha $ of $\Bbb Z_n$ on $(\mathcal
A,\psi)*_{red}(C_r^*(F_n),\tau_{F_n})$, induced by the following
mapping: {\em if $g$ is a natural generator of $\Bbb Z_n$ and
$u_1,\ldots, u_n$ are the natural generators of $C_r^*(F_n)$, then
$$
 \begin{aligned}
   \alpha(g) (x) &= x, \qquad \forall \ x\in \mathcal A;\\
   \alpha(g) (u_i) &= u_{i+1} \ for \ 1\le i\le n-1; \ = u_1 \ for \
   i=n.
 \end{aligned}
$$}

Using the same strategy as in the proof of Corollary 3.2.1,  we have
the following result.
\begin{lemma}
Suppose that $(\mathcal A,\psi)\in \mathcal S$, where $\mathcal S$
is defined in Definition 5.1.1. Then for all $n\ge 2$, $$(\mathcal
A\otimes_{min} C_r^*(\Bbb Z_n), \psi\otimes \tau_{\Bbb Z_n})*_{red}
(C_r^*(F_n),\tau_{F_n})$$ is an MF algebra.
\end{lemma}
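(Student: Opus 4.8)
The plan is to follow the strategy of Corollary 3.2.1, with $C_r^*(F_n)$ replaced by $\mathcal B:=(\mathcal A,\psi)*_{red}(C_r^*(F_n),\tau_{F_n})$ and with $\Bbb Z_n$ acting by the automorphism $\alpha$ described just before the lemma. Since $(\mathcal A,\psi)\in\mathcal S$, $\mathcal B$ is an MF algebra for the given $n$. The action $\alpha$ is trace-preserving on $(\mathcal B,\psi*\tau_{F_n})$: it is the identity on the factor $\mathcal A$ and, on the factor $C_r^*(F_n)$, it is induced by the automorphism of $F_n$ cyclically permuting the free generators, which preserves $\tau_{F_n}$; passing through the GNS construction, the two trace-preserving automorphisms induce a unitary of the free-product Hilbert space fixing the distinguished vector, so $\alpha$ defines an order-$n$ automorphism of $\mathcal B$ fixing $\psi*\tau_{F_n}$. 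By Lemma 3.2.2, $\mathcal D:=\mathcal B\rtimes_{\alpha,r}\Bbb Z_n$ is an MF algebra, and it carries the canonical faithful tracial state $\tau_{\mathcal D}$ extending $\psi*\tau_{F_n}$ (defined, as in the proof of Lemma 3.2.4, as a vector state in the concrete model of Section 2.2).

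Next I would realize $(\mathcal A\otimes_{min}C_r^*(\Bbb Z_n),\psi\otimes\tau_{\Bbb Z_n})*_{red}(C_r^*(F_n),\tau_{F_n})$ as a unital C$^*$-subalgebra of $\mathcal D$. In the concrete model, $\mathcal D$ contains $\mathcal B$ together with the order-$n$ unitary $w$ implementing $\alpha$; in particular $w$ commutes with $\mathcal A\subseteq\mathcal B$ and $wu_iw^*=u_{i+1}$ (indices mod $n$), where $u_1,\ldots,u_n$ are the canonical generators of $C_r^*(F_n)\subseteq\mathcal B$. Since $C^*(\Bbb Z_n)\cong\Bbb C^n$ is nuclear, $C^*(\mathcal A,w)$ is a quotient of $\mathcal A\otimes_{max}C^*(\Bbb Z_n)=\mathcal A\otimes_{min}C_r^*(\Bbb Z_n)$; a direct computation of $\tau_{\mathcal D}$ on the basis vectors of Section 2.2 gives $\tau_{\mathcal D}(aw^j)=\psi(a)$ for $j\equiv0\pmod n$ and $0$ otherwise, so the trace induced on $\mathcal A\otimes_{min}C_r^*(\Bbb Z_n)$ is $\psi\otimes\tau_{\Bbb Z_n}$, which is faithful; hence the quotient map is an isomorphism and $(C^*(\mathcal A,w),\tau_{\mathcal D})\cong(\mathcal A\otimes_{min}C_r^*(\Bbb Z_n),\psi\otimes\tau_{\Bbb Z_n})$. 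Setting $h_i:=u_1^iu_2^i\cdots u_n^i$ for $1\le i\le n$, the freeness assertion established inside Corollary 3.2.1 shows $h_1,\ldots,h_n$ generate a free subgroup, so $C^*(h_1,\ldots,h_n)\cong C_r^*(F_n)$ with $\tau_{\mathcal D}$ restricting to $\tau_{F_n}$.

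The heart of the proof is to show that $C^*(\mathcal A,w)$ and $C^*(h_1,\ldots,h_n)$ are free in $(\mathcal D,\tau_{\mathcal D})$; granting this, the C$^*$-subalgebra they generate is $*$-isomorphic to the required reduced free product (since $\tau_{\mathcal D}$ is faithful on it), and it is an MF algebra as a C$^*$-subalgebra of $\mathcal D$, which completes the proof. For freeness one reduces, by multilinearity, to showing that $\tau_{\mathcal D}$ annihilates every alternating product of trace-zero monomials of the form $a^{\circ}w^{j}$ (with $a^{\circ}\in\mathcal A$, $\psi(a^{\circ})=0$, or $a^{\circ}=1$ and $j\not\equiv0$) and nontrivial reduced words in $h_1,\ldots,h_n$. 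Moving all powers of $w$ to the right of such a product — which leaves the $\mathcal A$-entries unchanged since $w$ commutes with $\mathcal A$, and which turns each conjugated block $w^ky_lw^{-k}$ (with $y_l$ a word in the $h_i$) into a word in $u_1,\ldots,u_n$ with cyclically shifted indices — one ends up either with an element of $\mathcal B$ times a nontrivial power of $w$, whose $\tau_{\mathcal D}$-value is $0$, or with an alternating word in $\ker\psi\subseteq\mathcal A$ and reduced words in $u_1,\ldots,u_n$; in the latter case the same analysis of junctures in the free group as in Corollary 3.2.1 shows the $u$-words are genuinely nontrivial reduced words, so $\tau_{F_n}$ kills each of them and the freeness of $\mathcal A$ and $C_r^*(F_n)$ inside $\mathcal B$ forces the trace to be $0$. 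I expect the main obstacle to be exactly this juncture bookkeeping: verifying that conjugating the $h_i$-words by distinct powers of $w$ never causes cancellation in $F_n$, which is where the particular choice $h_i=u_1^i\cdots u_n^i$ and the nonvanishing of the intervening $w$-exponents are essential.
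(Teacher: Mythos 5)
Your proposal is correct and follows exactly the strategy the paper intends (the paper gives no written proof of Lemma 5.3.1 beyond the remark that one uses "the same strategy as in the proof of Corollary 3.2.1," together with the crossed-product Lemma 3.2.2 and the freeness/juncture analysis carried out in detail in the Claim inside Lemma 3.2.3): you form $\mathcal D=\big((\mathcal A,\psi)*_{red}(C_r^*(F_n),\tau_{F_n})\big)\rtimes_{\alpha,r}\Bbb Z_n$, identify $C^*(\mathcal A,w)\cong\mathcal A\otimes_{min}C_r^*(\Bbb Z_n)$ and $C^*(h_1,\ldots,h_n)\cong C_r^*(F_n)$ with $h_i=u_1^i\cdots u_n^i$, and verify freeness by pushing the powers of $w$ to the right. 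Your filled-in details (the trace computation $\tau_{\mathcal D}(aw^j)=\psi(a)\delta_{j\equiv 0}$ identifying the induced trace as $\psi\otimes\tau_{\Bbb Z_n}$, and the reduction of freeness to the non-cancellation of the shifted $h_i$-words) are exactly the points the paper leaves to the reader.
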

Following the notation as above. Consider an action $\beta $ of
$\Bbb Z_n$ on $$(\mathcal A\otimes_{min} C_r^*(\Bbb Z_n),
\psi\otimes_{min} \tau_{\Bbb Z_n})*_{red} (C_r^*(F_n),\tau_{F_n}),$$
induced by the following mapping: {\em if $h$ is a natural generator
of $\Bbb Z_n$, then}
$$
 \begin{aligned}
   \beta(h) (x) &= x, \qquad\qquad \ \  \forall \ x\in \mathcal A;\\
   \beta(h)(   v) &= e^{2\pi i /n}   v, \qquad where \   v \ is \ a \ natural \ generator \ of \ C_r^*(\Bbb Z_n); \\
   \beta(h) (u_j) &= u_{j+1} \ \qquad\qquad for \ 1\le j\le n-1; \ \ \ \\ \beta(h) (u_n)&= u_1
   .
 \end{aligned}
$$
Modifying the proof of Lemma 3.2.3 slightly, we have the following
result.
\begin{lemma}
Suppose that $(\mathcal A,\psi)\in \mathcal S$, where $\mathcal S$
is defined in Definition 5.1.1. Then for all $n\ge 2$, $$ (\mathcal
A\otimes_{min}  \mathcal M_n(\Bbb C), \psi\otimes_{min} \tau_{ n})
*_{red} (\mathcal A\otimes_{min} \mathcal M_n(\Bbb C),
\psi\otimes_{min} \tau_{ n})
 $$ is a C$^*$-subalgebra of $$ \big ((\mathcal A\otimes_{min} C_r^*(\Bbb Z_n),
\psi\otimes \tau_{\Bbb Z_n})*_{red} (C_r^*(F_n),\tau_{F_n})\big
)\rtimes_{\beta,r}\Bbb Z_n; $$  and, therefore, is an MF algebra,
where $\mathcal M_n(\Bbb C)$ is $n\times n$ matrix algebra with a
trace $\tau_n$.
\end{lemma}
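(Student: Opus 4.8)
The plan is to realise the stated free product as a trace-preserving C$^*$-subalgebra of the crossed product $\mathcal E$ appearing in the statement, following the proof of Lemma 3.2.3 very closely, and then to invoke the MF-permanence lemmas. Write $\mathcal D$ for the reduced free product $(\mathcal A\otimes_{min} C_r^*(\Bbb Z_n),\psi\otimes_{min}\tau_{\Bbb Z_n})*_{red}(C_r^*(F_n),\tau_{F_n})$ out of which $\mathcal E=\mathcal D\rtimes_{\beta,r}\Bbb Z_n$ is built, and $\phi=(\psi\otimes_{min}\tau_{\Bbb Z_n})*\tau_{F_n}$ for its free product trace; by Lemma 5.3.1 (which is where the hypothesis $(\mathcal A,\psi)\in\mathcal S$ enters), $\mathcal D$ is an MF algebra. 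First one checks that $\beta$ preserves $\phi$: it fixes $\mathcal A$ pointwise, multiplies the generator $v$ of $C_r^*(\Bbb Z_n)$ by $e^{2\pi i/n}$, and permutes the free generators $u_1,\dots,u_n$ of $F_n$ cyclically, while $\tau_{\Bbb Z_n}$ and $\tau_{F_n}$ are invariant under maps of this kind. Hence $\mathcal E$ carries the canonical faithful trace $\tilde\phi=\phi\circ E$, where $E\colon\mathcal E\to\mathcal D$ is the canonical conditional expectation, and by Lemma 3.2.2 (with $\Bbb Z_n$ finite) $\mathcal E$ is an MF algebra. Thus, once the embedding is produced, the stated algebra is a C$^*$-subalgebra of an MF algebra, hence is an MF algebra by Remark 2.4.2.

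To build the embedding, let $w\in\mathcal E$ be the canonical unitary, so that $w^n=1$ and $wxw^*=\beta(h)(x)$ for all $x\in\mathcal D$. Then $wvw^*=e^{2\pi i/n}v$ with $v^n=w^n=1$, so $v$ and $w$ form a clock--shift pair; therefore $C^*(v,w)$ is a nonzero quotient of the simple algebra $\mathcal M_n(\Bbb C)$, i.e.\ $C^*(v,w)\cong\mathcal M_n(\Bbb C)$, and $\tilde\phi$ restricted to $C^*(v,w)$ annihilates every $v^iw^j$ with $(i,j)\not\equiv(0,0)\pmod n$, so it is the normalised trace $\tau_n$. As $\mathcal A$, sitting as $\mathcal A\otimes 1\subseteq\mathcal A\otimes_{min}C_r^*(\Bbb Z_n)\subseteq\mathcal D\subseteq\mathcal E$, commutes with $v$ and is fixed by $w$, the subalgebra $\mathcal C_0:=C^*\big(\mathcal A\cup\{v,w\}\big)$ is a quotient of $\mathcal A\otimes_{min}\mathcal M_n(\Bbb C)$ that is faithful on $\mathcal A\otimes 1$; since every closed ideal of $\mathcal A\otimes_{min}\mathcal M_n(\Bbb C)$ has the form $J\otimes\mathcal M_n(\Bbb C)$, this quotient map is an isomorphism, $\mathcal C_0\cong\mathcal A\otimes_{min}\mathcal M_n(\Bbb C)$, and since every trace on $\mathcal A\otimes_{min}\mathcal M_n(\Bbb C)$ restricts to $\mathcal A\otimes 1$ as some trace $\psi'$ and then equals $\psi'\otimes\tau_n$, we get $\tilde\phi|_{\mathcal C_0}=\psi\otimes_{min}\tau_n$. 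Put $\mathcal C_1:=u_1\mathcal C_0u_1^*$, a second trace-preserving copy of $(\mathcal A\otimes_{min}\mathcal M_n(\Bbb C),\psi\otimes_{min}\tau_n)$ inside $(\mathcal E,\tilde\phi)$. It remains to show that $\mathcal C_0$ and $\mathcal C_1$ are free with respect to $\tilde\phi$; then $C^*(\mathcal C_0\cup\mathcal C_1)\cong(\mathcal A\otimes_{min}\mathcal M_n(\Bbb C),\psi\otimes_{min}\tau_n)*_{red}(\mathcal A\otimes_{min}\mathcal M_n(\Bbb C),\psi\otimes_{min}\tau_n)$ sits inside $\mathcal E$, completing the proof.

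The freeness of $\mathcal C_0$ and $\mathcal C_1=u_1\mathcal C_0u_1^*$ is the heart of the matter and is the exact analogue of the Claim in the proof of Lemma 3.2.3. By linearity it suffices to show $\tilde\phi\big(c_1(u_1c_1'u_1^*)c_2(u_1c_2'u_1^*)\cdots c_r(u_1c_r'u_1^*)\big)=0$, and likewise for alternating words that begin or end differently, where each $c_i,c_i'$ runs over a spanning set of $\ker(\tilde\phi|_{\mathcal C_0})$; one such set consists of the elements $a\,v^pw^q$ with $0\le p,q<n$, $(p,q)\ne(0,0)$, $a\in\mathcal A$, together with the elements $a^\circ\in\ker\psi\subseteq\mathcal A$. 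Substituting these and using $wv=e^{2\pi i/n}vw$, $wu_j=u_{j+1}w$ (cyclically) and $wa=aw$ to push every $w$ to the far right, the product becomes $\zeta\,W\,w^Q$ with $\zeta$ a root of unity, $Q\in\Bbb Z$, and $W$ a word of $\mathcal D$ alternating between blocks $a\,v^p$ from $\mathcal A\otimes_{min}C_r^*(\Bbb Z_n)$ and letters $u_j^{\pm1}$ from $C_r^*(F_n)$, the indices $j$ being determined by partial sums of the $q$'s. If $Q\not\equiv0\pmod n$ then $\tilde\phi(\zeta Ww^Q)=0$; otherwise it equals $\zeta\,\phi(W)$. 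Decomposing each block $a\,v^0$ as $\psi(a)\,1+(a-\psi(a)1)$ (a block $a\,v^p$ with $p\ne0$ is already centred in $\mathcal A\otimes_{min}C_r^*(\Bbb Z_n)$ because $\tau_{\Bbb Z_n}(v^p)=0$), the key point is that a block can degenerate to a scalar multiple of $1$ only when it came from an element $a\,v^pw^q$ with $p=0$, hence with $q\ne0$, and then the two $C_r^*(F_n)$-letters it separated carry generators whose indices differ by $q\not\equiv0\pmod n$, so they do not cancel. Consequently every surviving summand of $\phi(W)$ is $\phi$ of a genuine reduced alternating word of $\mathcal D$, each first-factor block centred and each second-factor block a nonempty reduced word in $F_n$; by the freeness of the two factors of $\mathcal D$ together with the vanishing of $\tau_{F_n}$ on group elements $\ne e$, each such summand is $0$, whence $\tilde\phi\big(c_1(u_1c_1'u_1^*)\cdots\big)=0$.

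The main obstacle is precisely that last combinatorial step: checking that, after the $w$'s are moved to the right and the first-factor blocks are split into centred parts and scalars, no spurious cancellation occurs among the free-group letters, so that the reduction really terminates on reduced alternating words of $\mathcal D$ to which freeness can be applied. The remaining ingredients---the clock--shift identification, the computation of $\tilde\phi|_{\mathcal C_0}$, and the transport of the MF property from $\mathcal E$---are routine given Lemma 5.3.1, Lemma 3.2.2 and Remark 2.4.2.
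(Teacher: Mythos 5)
Your proposal is correct and is essentially the paper's own argument: the paper gives no details for this lemma, saying only ``modifying the proof of Lemma 3.2.3 slightly,'' and your write-up is precisely that modification (clock--shift pair $v,w$ in the crossed product generating $\mathcal M_n(\Bbb C)$ commuting with $\mathcal A$, freeness of $\mathcal C_0$ and $u_1\mathcal C_0u_1^*$ via the same reduced-word computation as the Claim there, then MF-ness from Lemma 5.3.1, Lemma 3.2.2 and Remark 2.4.2). The combinatorial check that a degenerate block $av^0w^q$ forces $q\ne 0$ and hence prevents cancellation of the adjacent $F_n$-letters is exactly the analogue of the condition $(m_i,p_{i+1}-p_i)\ne(0,0)$ in the proof of Lemma 3.2.3, so nothing is missing.
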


Combining Lemma 5.2.2, Lemma 5.3.2 and the strategy used in Theorem
3.3.1, we have the following result.
\begin{theorem}
Suppose that $(\mathcal A,\psi)\in \mathcal S$, where $\mathcal S$
is defined in Definition 5.1.1. Suppose that $\mathcal B_i$ is a
unital  AF algebra with a faithful trace $\phi_i$ for
$i=0,1,2,\ldots, n$. Then
$$(\mathcal A\otimes_{min} \mathcal B_0, \psi\otimes_{min}
\phi_0)*_{red}  (\mathcal A\otimes_{min}\mathcal
B_1,\psi\otimes_{min}\phi_1)
*_{red}\cdots *_{red}(\mathcal A\otimes_{min}\mathcal B_n,\psi\otimes_{min}\phi_n) $$ is an MF algebra.
\end{theorem}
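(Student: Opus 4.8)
The plan is to follow verbatim the scheme of the proof of Theorem 3.3.1, with Lemma 5.2.2 in the role of Lemma 3.1.3 and Lemma 5.3.2 in the role of Lemma 3.2.3, and then to pass from two free factors to $n+1$ free factors exactly as in the last paragraph of the proof of Theorem 3.3.3. First I would settle the case $n=1$: that $(\mathcal A\otimes_{min}\mathcal B_0,\psi\otimes_{min}\phi_0)*_{red}(\mathcal A\otimes_{min}\mathcal B_1,\psi\otimes_{min}\phi_1)$ is an MF algebra whenever $\mathcal B_0$ and $\mathcal B_1$ are unital separable AF algebras with faithful traces. Given $x_1,\dots,x_m$ in this reduced free product and $\epsilon>0$, the AF structure of $\mathcal B_0$ and $\mathcal B_1$ lets one approximate the finitely many $\mathcal B_j$-tensor legs occurring in polynomial expressions for the $x_i$ by elements of finite-dimensional unital subalgebras $\mathcal D_j\subseteq\mathcal B_j$; since $\psi\otimes_{min}\phi_j$ is faithful and $1\in\mathcal D_j$, Remark 2.3.1 identifies the C$^*$-subalgebra of the big free product generated by $\mathcal A\otimes\mathcal D_0$ and $\mathcal A\otimes\mathcal D_1$ with $(\mathcal A\otimes\mathcal D_0,\psi\otimes\phi_0|_{\mathcal D_0})*_{red}(\mathcal A\otimes\mathcal D_1,\psi\otimes\phi_1|_{\mathcal D_1})$, so $\{x_1,\dots,x_m\}\subseteq_\epsilon$ this algebra in the sense of Definition 3.3.1. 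Next, approximating $\phi_0|_{\mathcal D_0}$, and then $\phi_1|_{\mathcal D_1}$, by faithful rational tracial states, applying Lemma 5.2.2 twice (the first time with $\mathcal A\otimes\mathcal D_1$ playing the role of $\mathcal A_1$, $\mathcal D_0$ that of $\mathcal B$ and $\mathcal A$ that of $\mathcal A_2$, the second time with the two indices interchanged) and gluing the two approximations with Voiculescu's approximate unitary equivalence theorem exactly as in the proof of Lemma 3.3.2, one obtains $\{x_1,\dots,x_m\}\subseteq_{3\epsilon}(\mathcal A\otimes\mathcal D_0,\psi\otimes\rho_0)*_{red}(\mathcal A\otimes\mathcal D_1,\psi\otimes\rho_1)$ with $\rho_0,\rho_1$ faithful rational traces. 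Picking a single $N\ge 2$ with unital trace-preserving embeddings $(\mathcal D_j,\rho_j)\hookrightarrow(\mathcal M_N(\Bbb C),\tau_N)$ and tensoring with the identity on $\mathcal A$, Remark 2.3.1 embeds this last algebra into $(\mathcal A\otimes_{min}\mathcal M_N(\Bbb C),\psi\otimes_{min}\tau_N)*_{red}(\mathcal A\otimes_{min}\mathcal M_N(\Bbb C),\psi\otimes_{min}\tau_N)$, which is an MF algebra by Lemma 5.3.2; Lemma 3.3.1 then yields the case $n=1$.

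For general $n$, set $\mathcal B=\mathcal B_0\otimes_{min}\cdots\otimes_{min}\mathcal B_n$ and $\phi=\phi_0\otimes_{min}\cdots\otimes_{min}\phi_n$, a unital separable AF algebra with a faithful trace. For each $i$ the map $a\otimes b\mapsto a\otimes(1\otimes\cdots\otimes b\otimes\cdots\otimes 1)$, with $b$ in the $i$-th slot, is a unital trace-preserving embedding of $\mathcal A\otimes_{min}\mathcal B_i$ into $\mathcal A\otimes_{min}\mathcal B$, so by the iterated form of Remark 2.3.1 the $(n+1)$-fold reduced free product in the statement embeds into the $(n+1)$-fold reduced free product of $(\mathcal A\otimes_{min}\mathcal B,\psi\otimes_{min}\phi)$ with itself. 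Letting $u$ be the canonical Haar unitary of $C_r^*(\Bbb Z)$, the subalgebras $\mathcal A\otimes\mathcal B$, $u(\mathcal A\otimes\mathcal B)u^*,\dots,u^n(\mathcal A\otimes\mathcal B)u^{-n}$ are free in $(\mathcal A\otimes_{min}\mathcal B,\psi\otimes_{min}\phi)*_{red}(C_r^*(\Bbb Z),\tau_{\Bbb Z})$ — the same fact used in the proof of Theorem 3.3.3 — so that $(n+1)$-fold free product embeds into $(\mathcal A\otimes_{min}\mathcal B,\psi\otimes_{min}\phi)*_{red}(C_r^*(\Bbb Z),\tau_{\Bbb Z})$. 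By part (1) of Lemma 3.3.3, applied to $C_r^*(\Bbb Z)\simeq C(\Bbb T)$ with its Haar trace, there are a unital separable AF algebra $\mathcal A_0$ with a faithful trace $\psi_0$ and a unital trace-preserving embedding $C_r^*(\Bbb Z)\hookrightarrow\mathcal A_0$; composing it with the unital trace-preserving embedding $a_0\mapsto 1\otimes a_0$ of $\mathcal A_0$ into $\mathcal A\otimes_{min}\mathcal A_0$ and invoking Remark 2.3.1 once more, we get
$$
(\mathcal A\otimes_{min}\mathcal B,\psi\otimes_{min}\phi)*_{red}(C_r^*(\Bbb Z),\tau_{\Bbb Z})\subseteq(\mathcal A\otimes_{min}\mathcal B,\psi\otimes_{min}\phi)*_{red}(\mathcal A\otimes_{min}\mathcal A_0,\psi\otimes_{min}\psi_0).
$$
The right-hand side is a reduced free product of two algebras of the form $\mathcal A\otimes_{min}(\text{unital separable AF})$ with faithful tensor-product traces, hence an MF algebra by the case $n=1$ already established; therefore the $(n+1)$-fold free product in the statement, being a C$^*$-subalgebra of it, is an MF algebra by Remark 2.4.1.

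The routine ingredients — the AF and rational-trace approximations and the bookkeeping with Definition 3.3.1 and Remark 2.3.1 — run just as in Section 3. The step that really needs care is the case $n=1$, where the whole weight rests on Lemma 5.3.2 and one must carry out the chain of reductions (AF $\rightsquigarrow$ finite-dimensional $\rightsquigarrow$ rational trace $\rightsquigarrow$ matrix algebra) without ever disturbing the common tensor factor $\mathcal A$; this is exactly why Lemma 5.2.2 is phrased for free products $(\mathcal A_1,\psi_1)*_{red}(\mathcal B\otimes_{min}\mathcal A_2,\tau\otimes_{min}\psi_2)$ and Lemma 5.3.2 for $\mathcal A\otimes_{min}\mathcal M_n(\Bbb C)$ rather than for bare finite-dimensional algebras. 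In the bootstrap the only delicate point is the freeness of the conjugates $u^k(\mathcal A\otimes\mathcal B)u^{-k}$, which is the standard fact about reduced free products with $C_r^*(\Bbb Z)$ already invoked in the proof of Theorem 3.3.3.
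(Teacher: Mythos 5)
Your proposal is correct and is essentially the proof the paper intends: the paper gives only the one-line indication ``combine Lemma 5.2.2, Lemma 5.3.2 and the strategy of Theorem 3.3.1,'' and your argument carries out exactly that program (AF $\rightsquigarrow$ finite-dimensional via Remark 2.3.1, rational-trace perturbation via Lemma 5.2.2 glued with Voiculescu's theorem as in Lemma 3.3.2, embedding into $(\mathcal A\otimes_{min}\mathcal M_N(\Bbb C),\psi\otimes_{min}\tau_N)*_{red}(\mathcal A\otimes_{min}\mathcal M_N(\Bbb C),\psi\otimes_{min}\tau_N)$ via Lemma 5.3.2), together with the same $C_r^*(\Bbb Z)$-conjugation bootstrap the paper uses in Theorem 3.3.3 to pass from two free factors to $n+1$. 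No gaps; the details you supply (common finite-dimensional subalgebras, a single $N\ge 2$ for both rational traces, AF-ness of $\mathcal B_0\otimes_{min}\cdots\otimes_{min}\mathcal B_n$, and Lemma 3.3.3(1) applied to $C_r^*(\Bbb Z)\simeq C(\Bbb T)$) are the right ones.
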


By Lemma 3.3.3, we have the following result.
\begin{theorem}
Suppose that $(\mathcal A,\psi)\in \mathcal S$, where $\mathcal S$
is defined in Definition 5.1.1. Suppose that $\mathcal B_i$ is a
unital  AH algebra with a faithful trace $\phi_i$ for
$i=0,1,2,\ldots, n$. Then
$$(\mathcal A\otimes_{min} \mathcal B_0, \psi\otimes_{min}
\phi_0)*_{red}  (\mathcal A\otimes_{min}\mathcal
B_1,\psi\otimes_{min}\phi_1)
*_{red}\cdots *_{red}(\mathcal A\otimes_{min}\mathcal B_n,\psi\otimes_{min}\phi_n) $$ is an MF algebra. In particular,
for every $n\ge 2$,
$$(\mathcal A\otimes_{min} \mathcal B_0, \psi\otimes_{min}
\phi_0)*_{red} (C_r^*(F_n),\tau_{F_n})$$ is an MF algebra. I.e.
$$
(\mathcal A\otimes_{min} \mathcal B_0, \psi\otimes_{min} \phi_0) \in
\mathcal S.
$$
\end{theorem}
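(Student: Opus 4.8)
The plan is to repeat, almost verbatim, the proof of Theorem 3.3.2, with the role of ``AF algebra'' played by ``$\mathcal A\otimes_{min}(\text{an AF algebra})$'' and the role of Theorem 3.3.1 played by Theorem 5.3.1. Write
\[ \mathcal D=(\mathcal A\otimes_{min}\mathcal B_0,\psi\otimes_{min}\phi_0)*_{red}\cdots*_{red}(\mathcal A\otimes_{min}\mathcal B_n,\psi\otimes_{min}\phi_n). \]
Fix elements $x_1,\ldots,x_m\in\mathcal D$ and $\epsilon>0$; by Lemma 3.3.1 it suffices to produce an MF algebra containing $\{x_1,\ldots,x_m\}$ to within $\epsilon$ in the sense of Definition 3.3.1.

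First I would localize in the factors $\mathcal B_i$. Since $\mathcal B_i$ is AH, it is the closure of an increasing sequence of unital homogeneous C$^*$-subalgebras, and because the minimal tensor product is compatible with such increasing unions, $\mathcal A\otimes_{min}\mathcal B_i$ is the closure of the increasing sequence of subalgebras $\mathcal A\otimes_{min}\mathcal C_i$ with $I_{\mathcal B_i}\in\mathcal C_i\subseteq\mathcal B_i$ unital and homogeneous. Choosing the $\mathcal C_i$ large enough, the $x_j$ all lie to within $\epsilon$ of the C$^*$-subalgebra of $\mathcal D$ generated by $\mathcal A\otimes_{min}\mathcal C_0,\ldots,\mathcal A\otimes_{min}\mathcal C_n$, and by Remark 2.3.1 (together with its evident extension to finitely many free factors) that subalgebra is $*$-isomorphic to
\[ (\mathcal A\otimes_{min}\mathcal C_0,\psi\otimes_{min}(\phi_0|_{\mathcal C_0}))*_{red}\cdots*_{red}(\mathcal A\otimes_{min}\mathcal C_n,\psi\otimes_{min}(\phi_n|_{\mathcal C_n})). \]

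Next I would pass from the homogeneous $\mathcal C_i$ to AF algebras. By Lemma 3.3.3 there is, for each $i$, a unital separable AF algebra $\mathcal D_i$ with a faithful trace $\eta_i$ and a unital trace-preserving embedding $\rho_i\colon(\mathcal C_i,\phi_i|_{\mathcal C_i})\hookrightarrow(\mathcal D_i,\eta_i)$. Then $\mathrm{id}_{\mathcal A}\otimes\rho_i$ is a unital embedding of $\mathcal A\otimes_{min}\mathcal C_i$ into $\mathcal A\otimes_{min}\mathcal D_i$ --- injective because the minimal tensor product of injective $*$-homomorphisms is injective --- which is trace-preserving from $\psi\otimes_{min}(\phi_i|_{\mathcal C_i})$ to $\psi\otimes_{min}\eta_i$, the latter being a faithful trace since $\psi$ and $\eta_i$ are. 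Applying Remark 2.3.1 again, the reduced free product of the factors $(\mathcal A\otimes_{min}\mathcal C_i,\psi\otimes_{min}(\phi_i|_{\mathcal C_i}))$ embeds into $(\mathcal A\otimes_{min}\mathcal D_0,\psi\otimes_{min}\eta_0)*_{red}\cdots*_{red}(\mathcal A\otimes_{min}\mathcal D_n,\psi\otimes_{min}\eta_n)$, which is an MF algebra by Theorem 5.3.1. Thus $\{x_1,\ldots,x_m\}$ lies within $\epsilon$ of an MF algebra, and Lemma 3.3.1 shows $\mathcal D$ is an MF algebra. There is no real obstacle beyond routine bookkeeping of traces; the points one must be careful about are the functoriality properties of $\otimes_{min}$ used above (compatibility with the increasing unions coming from AH structures, and preservation of unital injective trace-preserving maps), while the single non-routine analytic input --- that reduced free products of $\mathcal A\otimes_{min}(\text{AF algebras})$ are MF --- is already packaged in Theorem 5.3.1.

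For the final assertion, recall that $C_r^*(F_n)\cong(C_r^*(\Bbb Z),\tau_{\Bbb Z})*_{red}\cdots*_{red}(C_r^*(\Bbb Z),\tau_{\Bbb Z})$ ($n$ factors) and that $C_r^*(\Bbb Z)\cong C(\Bbb T)$ is an AH algebra. The unital trace-preserving embedding $c\mapsto 1_{\mathcal A}\otimes c$ of $C(\Bbb T)$ into $\mathcal A\otimes_{min}C(\Bbb T)$ induces, via Remark 2.3.1, an embedding of $(\mathcal A\otimes_{min}\mathcal B_0,\psi\otimes_{min}\phi_0)*_{red}(C_r^*(F_n),\tau_{F_n})$ into
\[ (\mathcal A\otimes_{min}\mathcal B_0,\psi\otimes_{min}\phi_0)*_{red}(\mathcal A\otimes_{min}C(\Bbb T),\psi\otimes_{min}\tau_{\Bbb Z})*_{red}\cdots*_{red}(\mathcal A\otimes_{min}C(\Bbb T),\psi\otimes_{min}\tau_{\Bbb Z}) \]
with $n$ copies of the $C(\Bbb T)$-factor. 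By the first part of the theorem (applied to the given $\mathcal B_0$ together with $n$ further factors all equal to $C(\Bbb T)$), this larger free product is an MF algebra, hence so is its subalgebra $(\mathcal A\otimes_{min}\mathcal B_0,\psi\otimes_{min}\phi_0)*_{red}(C_r^*(F_n),\tau_{F_n})$. Since $n\ge 1$ was arbitrary, $(\mathcal A\otimes_{min}\mathcal B_0,\psi\otimes_{min}\phi_0)\in\mathcal S$.
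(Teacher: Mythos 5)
Your proposal is correct and follows essentially the same route the paper intends: the paper's one-line proof ("By Lemma 3.3.3") is precisely the reduction you carry out — localize each AH factor to a homogeneous subalgebra, embed it trace-preservingly into an AF algebra via Lemma 3.3.3, tensor with $\mathcal A$, invoke Theorem 5.3.1 for the AF case, and finish with Remark 2.3.1 and Lemma 3.3.1; the "in particular" clause via $C_r^*(F_n)$ as a reduced free product of copies of $(C(\Bbb T),\tau_{\Bbb Z})$ is likewise the intended argument. Your write-up simply makes explicit the tensor-product functoriality the paper leaves tacit.
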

\begin{corollary}
For $i=1,2$, let  $ \mathcal A_1^{(i)},\ldots, \mathcal
A_n^{(i)},\mathcal B^{(i)} $  be a family of unital   AH algebras
with faithful tracial states $  \psi_1^{(i)},\ldots, \psi_n^{(i)},
\phi^{(i)}  $, respectively.  Let
$$
\begin{aligned}
   (\mathcal A^{(i)},\psi^{(i)})=  (\mathcal
   A_1^{(i)},\psi^{(i)}_1)*_{red}\cdots *_{red} (\mathcal
   A_n^{(i)},\psi^{(i)}_n), \qquad for \ i=1,2;
\end{aligned}
$$ and $\psi^{(i)}\otimes_{min} \phi^{(i)}$ be a faithful trace on $
 \mathcal A^{(i)}\otimes_{min} \mathcal B^{(i)} $. Then
 $$
( \mathcal A^{(1)}\otimes_{min} \mathcal B^{(1)} ,
\psi^{(1)}\otimes_{min} \phi^{(1)})*_{red}(\mathcal
A^{(2)}\otimes_{min} \mathcal B^{(2)} , \psi^{(2)}\otimes
\phi^{(2)})
 $$ is an MF algebra.
\end{corollary}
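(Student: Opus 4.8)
The plan is to deduce Corollary 5.3.1 from Theorem 5.3.2. The obstacle to applying Theorem 5.3.2 directly is that it requires the two minimal-tensor factors to be built over one \emph{common} algebra lying in $\mathcal S$, whereas in Corollary 5.3.1 the factors involve two possibly different algebras $\mathcal A^{(1)}$ and $\mathcal A^{(2)}$. The device is to pass to the common algebra $(\mathcal A,\psi):=(\mathcal A^{(1)},\psi^{(1)})*_{red}(\mathcal A^{(2)},\psi^{(2)})$, with $\psi=\psi^{(1)}*\psi^{(2)}$ the (faithful, tracial) free-product state, apply Theorem 5.3.2 to it, and then recover the algebra of the statement as a C$^*$-subalgebra of the resulting MF algebra.

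The first step is to check that $(\mathcal A,\psi)\in\mathcal S$, i.e., that $(\mathcal A,\psi)*_{red}(C_r^*(F_m),\tau_{F_m})$ is an MF algebra for every integer $m\ge1$. Using associativity of the reduced free product and the identification $(C_r^*(F_m),\tau_{F_m})\cong(C_r^*(\Bbb Z),\tau_{\Bbb Z})*_{red}\cdots*_{red}(C_r^*(\Bbb Z),\tau_{\Bbb Z})$ ($m$ copies), this algebra is isomorphic to
\[
(\mathcal A^{(1)}_1,\psi^{(1)}_1)*_{red}\cdots*_{red}(\mathcal A^{(2)}_n,\psi^{(2)}_n)*_{red}(C_r^*(\Bbb Z),\tau_{\Bbb Z})*_{red}\cdots*_{red}(C_r^*(\Bbb Z),\tau_{\Bbb Z}),
\]
a reduced free product, with respect to faithful tracial states, of the $2n$ unital separable AH algebras $\mathcal A^{(i)}_j$ together with $m$ copies of the AH algebra $C_r^*(\Bbb Z)\cong C(\Bbb T)$. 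By Theorem 3.3.3 this is an MF algebra; since $m$ was arbitrary, $(\mathcal A,\psi)\in\mathcal S$ by Definition 5.1.1, even though $\mathcal A$ itself is typically not an AH algebra.

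The second step is to apply Theorem 5.3.2 with $n=1$, $\mathcal B_0=\mathcal B^{(1)}$ and $\mathcal B_1=\mathcal B^{(2)}$: since these are unital AH algebras with faithful traces and $(\mathcal A,\psi)\in\mathcal S$, the algebra
\[
(\mathcal A\otimes_{min}\mathcal B^{(1)},\psi\otimes_{min}\phi^{(1)})*_{red}(\mathcal A\otimes_{min}\mathcal B^{(2)},\psi\otimes_{min}\phi^{(2)})
\]
is an MF algebra. Finally, for each $i$ the algebra $\mathcal A^{(i)}$ embeds into $\mathcal A$ as the $i$-th free factor, a unital C$^*$-subalgebra on which $\psi$ restricts to $\psi^{(i)}$; tensoring (functoriality of $\otimes_{min}$ for unital inclusions), $\mathcal A^{(i)}\otimes_{min}\mathcal B^{(i)}$ is a unital C$^*$-subalgebra of $\mathcal A\otimes_{min}\mathcal B^{(i)}$ on which $\psi\otimes_{min}\phi^{(i)}$ restricts to $\psi^{(i)}\otimes_{min}\phi^{(i)}$. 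By Remark 2.3.1, $(\mathcal A^{(1)}\otimes_{min}\mathcal B^{(1)},\psi^{(1)}\otimes_{min}\phi^{(1)})*_{red}(\mathcal A^{(2)}\otimes_{min}\mathcal B^{(2)},\psi^{(2)}\otimes_{min}\phi^{(2)})$ is then a C$^*$-subalgebra of the MF algebra of the second step, hence an MF algebra by Remark 2.4.1. The only genuinely non-routine point is in the first step — recognizing that $\mathcal A^{(1)}*_{red}\mathcal A^{(2)}$, while not AH, still lies in $\mathcal S$ because freely multiplying it by $C_r^*(F_m)$ merely produces a longer reduced free product of AH algebras handled by Theorem 3.3.3; everything else is routine bookkeeping with the facts that free factors embed unitally and trace-preservingly, that $\otimes_{min}$ is functorial for such inclusions, and that reduced free products are associative.
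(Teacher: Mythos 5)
Your proposal is correct and follows essentially the same route as the paper: both pass to the common algebra $(\mathcal A,\psi)=(\mathcal A^{(1)},\psi^{(1)})*_{red}(\mathcal A^{(2)},\psi^{(2)})$, verify $(\mathcal A,\psi)\in\mathcal S$ by writing $(\mathcal A,\psi)*_{red}(C_r^*(F_m),\tau_{F_m})$ as a longer reduced free product of AH algebras covered by Theorem 3.3.3, and then invoke Theorem 5.3.2 together with the subalgebra permanence of MF algebras. The only cosmetic difference is in the final embedding: you apply Theorem 5.3.2 with the two tensor factors $\mathcal A\otimes_{min}\mathcal B^{(1)}$ and $\mathcal A\otimes_{min}\mathcal B^{(2)}$ and conclude via Remark 2.3.1, whereas the paper embeds the target into $(\mathcal A\otimes_{min}\mathcal B^{(1)}\otimes_{min}\mathcal B^{(2)},\,\cdot\,)*_{red}(C_r^*(F_2),\tau_{F_2})$; your variant is, if anything, slightly more economical.
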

\begin{proof}
Let $$(\mathcal A,\tau)=(\mathcal A^{(1)},\psi^{(1)})*_{red}
(\mathcal A^{(2)},\psi^{(2)}).$$ Let $\tau\otimes_{min}
\phi^{(1)}\otimes_{min} \phi^{(2)}$ be a faithful tracial state on
$\mathcal A\otimes_{min}\mathcal B_1\otimes_{min} \mathcal B_2$. By
Theorem 3.3.3 and Theorem 5.3.2, we know that $$(\mathcal A,\tau)\in
\mathcal S;$$ and
$$
\mathcal D=(\mathcal A\otimes_{min}\mathcal B_1\otimes_{min}
\mathcal B_2, \tau\otimes_{min} \phi^{(1)}\otimes_{min}
\phi^{(2)})*_{red} (C_r^*(F_2),\tau_{F_2})$$  { is an MF algebra.}
Therefore, embedded as a C$^*$-subalgebra of $\mathcal D$,
$$ ( \mathcal A^{(1)}\otimes_{min} \mathcal B^{(1)} ,
\psi^{(1)}\otimes_{min} \phi^{(1)})*_{red}(\mathcal
A^{(2)}\otimes_{min} \mathcal B^{(2)} , \psi^{(2)}\otimes_{min}
\phi^{(2)})
 $$ is an MF algebra.
\end{proof}

\begin{example}
Suppose that $\mathcal A_i$, $i=1,2$, is an irrational
C$^*$-algebra, or a UHF algebra, with a faithful tracial state
$\psi_i$. For all $m, n\ge 1$, let
$$\mathcal D=(C_r^*(F_m)\otimes_{min} \mathcal A_1, \tau_{F_m}\otimes_{min}
\psi_1)*_{red} (C_r^*(F_n)\otimes_{min} \mathcal A_2,
\tau_{F_n}\otimes_{min} \psi_2).$$ Then $\mathcal D$ is an MF
algebra and $Ext(\mathcal D)$ is not a group.
\end{example}

\vspace{1cm}

{\small

  \noindent {Don Hadwin} \makebox[4.5cm] { }
\makebox[5cm] {Jiankui Li } \\
 \makebox[0.8cm] {} \makebox[5cm] {Department of Mathematics \& Statistics}
\makebox[2.1cm] { }
\makebox[5cm] {Department of Mathematics} \\
  \makebox[5cm] {University of New Hampshire} \makebox[4.8cm]
{ } \makebox[5cm] {East China University of Science and Technology} \\
  {Durham, NH, 03824} \makebox[3cm] { }
\makebox[6.2cm] {Shanghai China} \\
  {email: don@math.unh.edu} \makebox[3cm] { }
\makebox[6.2cm] {email: jiankuili@yahoo.com } \\

\vspace{0.1cm}

 \noindent {Junhao Shen} \makebox[ 4.6cm] { }
\makebox[5cm] {Liguang Wang} \\
 \makebox[0.8cm] { } \makebox[5cm] {Department of Mathematics \& Statistics  }
\makebox[2.3cm] { }
\makebox[5cm] {School of Mathematical Science  } \\
  \makebox[5cm] {University of New Hampshire } \makebox[2.6cm]
{ } \makebox[5cm] {Qufu Normal University} \\
  {Durham, NH, 03824} \makebox[3.4cm] { }
\makebox[6.6cm] {Qufu, Shandong, China} \\
  {email: jog2@cisunix.unh.edu  } \makebox[3.1cm] { }
\makebox[6cm] {email: wangliguang0510@163.com} \\

}


\begin{thebibliography}{99}



\bibitem {An} J. Anderson, ``A C$^*$-algebra for which $Ext(\mathcal A)$ is not a group," Ann. of Math.
(1978), 455–-458.

\bibitem {ABH} J. Anderson; B. Blackadar; U. Haagerup,  ``Minimal projections in
the reduced group $C\sp *$-algebra of $Z\sb n\ast Z\sb m$,"
 J. Operator Theory 26 (1991), no. 1, 3--23.



\bibitem{Avit} D. Avitzour, ``Free Products of $C^\ast$-Algebras," Transactions of the American
Mathematical Society, Vol. 271, No. 2, (Jun., 1982), pp. 423-435.

\bibitem{Bl} B. Blackadar, ``Matricial and ultramatricial topology,"  Operator algebras, mathematical
physics, and low-dimensional topology (Istanbul, 1991),  11--38, Res. Notes Math., 5, A K Peters, Wellesley, MA, 1993.

\bibitem {BK} B. Blackadar,  E. Kirchberg, ``Generalized inductive limits of finite-dimensional $C\sp *$-algebras,"  Math. Ann.  307  (1997),  no. 3, 343--380.

\bibitem {BK2} B. Blackadar,  E. Kirchberg, ``Blackadar, Bruce; Kirchberg, Eberhard Inner quasidiagonality and strong NF algebras,"
  Pacific J. Math.  198  (2001),  no. 2, 307--329.

\bibitem {BK3} B. Blackadar,  E. Kirchberg, ``Irreducible representations of inner quasidiagonal
C*-algebras," arXiv:0711.4949.




\bibitem{BDF} L.  Brown, R.  Douglas and P.  Fillmore, ``Extensions of C$^*$-algebras and
Khomology," Ann. of Math. 105 (1977), 265--324.


 \bibitem {Br} N. Brown,``On quasidiagonal $C\sp *$-algebras,"  Operator
 algebras and applications,  19--64, Adv. Stud. Pure Math., 38, Math.
 Soc. Japan, Tokyo, 2004.

\bibitem{Br2} N. Brown, ``Invariant means and finite representation theory of
C$^*$-algebras," Memoirs of the  American Mathematical Society 184
(2006), no. 865.

\bibitem {BrOz} N. Brown and N. Ozawa, ``C$^*$-algebras and finite dimensional
approximations," Graduate Studies in Mathematics, 88. American
Mathematical Society, Providence, RI, 2008.




\bibitem {CE} M.   Choi and E. Effros, ``The completely positive lifting problem for
C$^*$-algebras," Ann. of Math. 104 (1976), 585--609.







\bibitem{Dyk} K. Dykema, ``Simplicity and the stable rank of some free product
C$^*$-algebras," Trans. Amer. Math. Soc. 351 (1999), 1--40.

\bibitem{Dyk2} K. Dykema, ``Exactness of reduced amalgamated free product
C$^*$-algebras," Forum Math. 16 (2004), 161-180.


\bibitem{DHR} Ken Dykema, Uffe Haagerup and Mikael Rordam,
``The stable rank of some free product C$^*$-algebras," Duke Math.
J. 90 (1997), 95-121.









\bibitem {Gr} P. Green, ``The structure of imprimitivity algebras," J. Funct. Anal. 36
(1980), 88 - 104.

\bibitem {Haag} U. Haagerup,  S. Thorbj{\o}rnsen,   ``A new application of random matrices: ${\rm Ext}(C\sp *\sb {\rm red}(F\sb 2))$ is not a group,"  Ann. of Math. (2)  162  (2005),  no. 2, 711--775.



\bibitem {HaSh} D. Hadwin, J. Shen, \ \textquotedblleft Free orbit diension of finite von Neumann
algebrasm\textquotedblright  Journal of Functional Analysis 249
(2007) 75-91.

\bibitem {HaSh2} D. Hadwin, J. Shen, \ \textquotedblleft Topological free entropy dimension in unital
C$^*$-algebras,\textquotedblright  arXiv: 0704.0667

\bibitem {HaSh3} D. Hadwin, J. Shen, \ \textquotedblleft Topological free entropy dimension in unital
C$^*$-algebras II: Orthogonal sum of unital C$^*$
algebras,\textquotedblright    arXiv: 0708.0168

\bibitem {HaSh4} D. Hadwin, Q. Li, J. Shen, \ \textquotedblleft Topological free entropy dimension in nuclear unital
C$^*$-algebras and in full free products of unital
C$^*$-algebras,\textquotedblright   arXiv:0802.0281

\bibitem {HaSh5} D. Hadwin, J. Shen, ``Some examples of Blackadar
and Krichberg's MF algebras," arXiv:0806.4712.




 \bibitem{Kir} E. Kirchberg, ``On nonsemisplit extensions, tensor products and exactness of group $C\sp *$-algebras,"
   Invent. Math.  112  (1993),  no. 3, 449--489.














\bibitem {Ro} J. Rosenberg, ``Quasidiagonality and inuclearity (appendix to strongly quasidiagonal
operators by D. Hadwin)," J. Operator Theory 18 (1987), 15–18.







\bibitem{Voi1} D.Voiculescu, ``A non-commutative Weyl-von Neumann theorem,"  Rev. Roumaine Math. Pures Appl.  21  (1976), no. 1, 97--113.

\bibitem{VoiQusi} D. Voiculescu, ``A note on quasi-diagonal $C\sp *$-algebras and homotopy,"  Duke Math. J.  62  (1991),  no. 2, 267--271.


\bibitem {V4}D. Voiculescu,  ``Around quasidiagonal operators,"  Integral Equations Operator Theory  17  (1993),  no. 1, 137--149.


 \bibitem {V2}D. Voiculescu, \textquotedblleft The analogues of entropy and of
 Fisher's information measure in free probability theory II,"
 Invent.\ Math., {\ 118} (1994), 411-440.

 \bibitem {V3}D. Voiculescu, \textquotedblleft The analogues of entropy and of
 Fisher's information measure in free probability theory III: The
 absence of Cartan subalgebras," Geom.\ Funct.\ Anal.\ {\ 6} (1996)
 172--199.

\bibitem {Voi} D. Voiculescu, \textquotedblleft The topological version of free entropy,"  Lett.
Math. Phys.  62  (2002),  no. 1, 71--82.

\bibitem {Voi2} D. Voiculescu, K. Dykema and A. Nica,  { ``Free Random
Variables,''} CRM Monograph Series, vol. 1, AMS, Providence, R.I.,
1992.

\bibitem{Wa} S. Wassermann, ``C$^*$-algebras associated with
groups with Kazhdan's Property T," Annals of Mathematics 134
(1991), 423-431.


\end{thebibliography}
\end{document}